\def \To{\longrightarrow}
\def \dim{\operatorname{dim}}
\def \gr{\operatorname{gr}}
\def \ord{\operatorname{ord}}
\def \C{\mathcal{C}}
\def \md{\mathcal{D}}
\def \D{\Delta}
\def \d{\delta}
\def \e{\varepsilon}
\def \R{\mathcal{GR}}
\def \Z{\mathbb{Z}}
\def \k{\mathbbm{k}}
\def \1{\mathbf{1}}
\def \Id{\operatorname{Id}}
\def \rep{\operatorname{rep}}
\def \ord{\operatorname{ord}}
\numberwithin{equation}{section}
\newtheorem{theorem}{Theorem}[section]
\newtheorem{lemma}[theorem]{Lemma}
\newtheorem{proposition}[theorem]{Proposition}
\newtheorem{corollary}[theorem]{Corollary}
\newtheorem{definition}[theorem]{Definition}
\newtheorem{remark}[theorem]{Remark}
\begin{document}

\title[THE GREEN RINGS OF MINIMAL HOPF QUIVERS]{THE GREEN RINGS OF MINIMAL HOPF QUIVERS$^\dag$}\thanks{$^\dag$Supported by PCSIRT IRT1264, SRFDP 20130131110001 and SDNSF ZR2013AM022.}

\subjclass[2010]{19A22, 18D10, 16G20}

\keywords{Green ring, tensor category, quiver representation}

\author[H.-L. Huang]{Hua-Lin Huang}
\address{School of Mathematics, Shandong University,
Jinan 250100, China} \email{hualin@sdu.edu.cn}

\author[Y. Yang]{Yuping Yang*}\thanks{*Corresponding author.}
\address{School of Mathematics, Shandong University,
Jinan 250100, China} \email{yupingyang.sdu@gmail.com}

\date{}
\maketitle

\begin{abstract}
Let $\k$ be a field and $Q$ a minimal Hopf quiver, i.e., a cyclic quiver or the infinite linear quiver, and let $\rep^{ln}(Q)$ denote the category of locally nilpotent finite dimensional $\k$-representations of $Q.$ The category $\rep^{ln}(Q)$ has natural tensor structures induced from graded Hopf structures on the path coalgebra $\k Q.$ Tensor categories of the form $\rep^{ln} (Q)$ are an interesting class of tame hereditary pointed tensor categories which are not finite. The aim of this paper is to compute the Clebsch-Gordan formulae and Green rings of such tensor categories.
\end{abstract}

\section{Introduction}
Throughout the paper, we work over an algebraically closed field $\k$ of characteristic $0.$ Vector spaces, (co)algebras, (co)modules, Hopf algebras, categories, morphisms and unadorned $\otimes$ are over $\k.$ By a tensor category is meant a locally finite abelian rigid monoidal category in which the neutral object is simple, see \cite{egno} for unexplained notions of tensor categories. Recall that the Green ring of a tensor category $\C,$ denoted by $\R(\C),$ is the free abelian group generated by the isomorphism classes $[X]$ of objects in $\C,$ with multiplication given by tensor product $[X] \cdot [Y] = [X \otimes Y]$ modulo all split short exact sequences. It is well known that Green rings are a convenient way of organizing information about direct sums and tensor products of tensor categories.

Given a tensor category $\C,$ it is certainly interesting to determine its Green ring $\R(\C).$ However, this mission is generally too complicated to be accomplished. Recently the Green rings were computed for some relatively less complicated tensor categories, for example the module categories of Taft algebras in \cite{cvoz}, the module categories of generalized Taft algebras in \cite{lz}, and pointed tensor categories of finite type \cite{gr1}. A key feature of these tensor categories investigated in \cite{cvoz, lz, gr1} is that there are only finitely many indecomposable objects, up to isomorphism, in them. In retrospect, this is the main reason why their Green rings are computable.

The aim of this paper is to compute the Green rings of some tame hereditary pointed tensor categories. In the viewpoint of the representation theory of algebras (see e.g. \cite{ass}), naturally this is a further question we may ask ourselves immediately after \cite{gr1}. A tensor category $\C$ is said to be pointed, if every simple object of $\C$ is invertible. By reconstruction theorem \cite{egno}, a pointed tensor category with a fiber functor can be presented as the category of finite dimensional right comodules over a pointed Hopf algebra. On the other hand, the tame hereditary condition is equivalent to saying that such a category is equivalent to the category of locally nilpotent finite dimensional representations of a cyclic quiver or the infinite linear quiver, see \cite{s, hsaq1}. Cyclic and infinite linear quivers are called minimal Hopf quivers as they are basic building blocks of general Hopf quivers \cite{cr2, hsaq1}. Note that finite dimensional indecomposable representations of cyclic and infinite linear quivers are explicitly classified in quiver representation theory \cite{ass} and the Hopf structures over such quivers are given in \cite{hsaq1}, now we are in a good position to compute the associated Green rings via similar idea of \cite{gr1}. We remark that the Green rings of the categories of quiver representations with the vertex-wise and arrow-wise tensor product were studied in \cite{h1,h2,k1,k2}. Note that such tensor structures are generally not induced from a bialgebra, and  hence are quite different from ours.

Here is the organization of the paper. In Section 2 we review some necessary facts. In Section 3 and Section 4, we compute the Clebsch-Gordan formulae and Green rings of the tensor categories associated to cyclic and infinite linear quivers respectively.

\section{Preliminaries}

In this section we recall some preliminary notions and facts about
quivers, representations, path coalgebras, Hopf quivers and tensor
categories.

\subsection{Quivers and path coalgebras}
A quiver is a quadruple $Q=(Q_0,Q_1,s,t),$ where $Q_0$ is the set of vertices,
$Q_1$ is the set of arrows, and $s,t: Q_1 \longrightarrow Q_0$ are
two maps assigning respectively the source and the target for each
arrow. For $a \in Q_1,$ we write $a:s(a) \To t(a).$ A vertex is, by
convention, said to be a trivial path of length $0.$ We also write
$s(g)=g=t(g)$ for each $g \in Q_0.$ The length of an arrow is set to
be $1.$ In general, a non-trivial path of length $n \ (\ge 1)$ is a
sequence of concatenated arrows of the form $p=a_n \cdots a_1$ with
$s(a_{i+1})=t(a_i)$ for $i=1, \cdots, n-1.$ By $Q_n$ we denote the
set of the paths of length $n.$

Let $Q$ be a quiver and $\k Q$ the associated path space which is
the $\k$-span of its paths. There is a natural coalgebra structure
on $\k Q$ with comultiplication as split of paths. Namely, for a
trivial path $g,$ set $\D(g)=g \otimes g$ and $\e(g)=1;$ for a
non-trivial path $p=a_n \cdots a_1,$ set
$$\D(p)=t(a_n) \otimes p + \sum_{i=1}^{n-1}a_n \cdots a_{i+1}
\otimes a_i \cdots a_1 +p \otimes s(a_1)$$ and $\e(p)=0.$ This is
the so-called path coalgebra of the quiver $Q.$

There exists on $\k Q$ an intuitive length gradation $\k
Q=\bigoplus_{n \geqslant 0}\k Q_n$ which is compatible with the
comultiplication $\D$ just defined. It is clear that the path
coalgebra $\k Q$ is pointed, and the set of group-like elements
$G(\k Q)$ is $Q_0.$  Moreover, the coradical filtration of $\k Q$ is
$$ \k Q_0 \subseteq \k Q_0 \oplus \k Q_1 \subseteq \k Q_0 \oplus \k Q_1
\oplus \k Q_2 \subseteq \cdots \ ,$$ therefore it is coradically
graded.

\subsection{Hopf quivers}
A quiver $Q$ is said to be a Hopf quiver if the corresponding path coalgebra $kQ$ admits a
graded Hopf algebra structure, see \cite{cr2}. Hopf quivers can be determined by
ramification data of groups. Let $G$ be a group, $\C$ the set of
conjugacy classes. A ramification datum $R$ of the group $G$ is a
formal sum $\sum_{C \in \C}R_CC$ of conjugacy classes with
coefficients in $\mathbb{N}=\{0,1,2,\cdots\}.$ The corresponding
Hopf quiver $Q=Q(G,R)$ is defined as follows: the set of vertices
$Q_0$ is $G,$ and for each $x \in G$ and $c \in C,$ there are $R_C$
arrows going from $x$ to $cx.$ For a given Hopf quiver $Q,$ the set
of graded Hopf structures on $kQ$ is in one-to-one correspondence
with the set of $kQ_0$-Hopf bimodule structures on $kQ_1.$

A Hopf quiver $Q = Q(G,R)$ is connected if and only if the union of the
conjugacy classes with non-zero coefficients in R generates G. We denote
the unit element of G by e. If $R_{\{e\}}\neq 0$, then there are $R_{\{e\}}$-loops attached
to each vertex; if the order of elements in a conjugacy class $C \neq {e}$ is $n$
and $R_C \neq 0$, then corresponding to these data in $Q$ there is a sub-quiver
$(n,R_C)$-cycle (called $n$-cyclic quiver if $R_C = 1$), i.e., the quiver having $n$
vertices, indexed by the set $\Z_n$ of integers modulo $n$, and $R_C$ arrows going from
$i$ to $i + 1$ for each $i \in \Z_n$; if the order of elements in a conjugacy class $C$ is $\infty$,
then in $Q$ there is a sub-quiver $R_C$-chain (called infinite linear quiver if $R_C = 1$),
i.e., a quiver having set of vertices indexed by the set $\Z$ of integral numbers,
and $R_C$ arrows going from $j$ to $j + 1$ for each $j \in \Z.$ Therefore, cyclic quivers
and the infinite linear quiver are basic building blocks of
general Hopf quivers and they are called minimal Hopf quivers.

\subsection{Quiver representations}
Let $Q$ be a quiver. A representation of $Q$ is a collection
$$V=(V_g,V_a)_{g \in Q_0, a \in Q_1}$$ consisting of a vector space
$V_g$ for each vertex $g$ and a linear map $V_a: V_{s(a)} \To
V_{t(a)}$ for each arrow $a.$ A morphism of representations $\phi: V
\To W$ is a collection $\phi=(\phi_g)_{g \in Q_0}$ of linear maps
$\phi_g:V_g \To W_g$ for each vertex $g$ such that
$W_a\phi_{s(a)}=\phi_{t(a)}V_a$ for each arrow $a.$ Given a
representation $V$ of $Q$ and a path $p,$ we define $V_p$ as
follows. If $p$ is trivial, say $p=g \in Q_0,$ then put
$V_p=\Id_{V_g}.$ For a non-trivial path $p=a_n \cdots a_2a_1,$ put
$V_p=V_{a_n} \cdots V_{a_2}V_{a_1}.$ A representation $V$ of $Q$ is
said to be locally nilpotent if for all $g \in Q_0$ and all $x \in
V_g,$ $V_p(x) = 0$ for all but finitely many paths $p$ with source $g.$ A representation $V$ is said to be finite dimensional, if $\sum_{g \in Q_0} \dim V_g < \infty.$ Let $\rep^{ln}(Q)$ denote the category of locally nilpotent finite dimensional representations of $Q.$ It is well known that the
category of finite dimensional right $\k Q$-comodules is equivalent to $\rep^{ln}(Q),$ see \cite{s}.

\subsection{Tensor categories}
A monoidal category is a sextuple
$(\C,\otimes,\1,\alpha,\lambda,\rho),$ where $\C$ is a category,
$\otimes: \C \times \C \to \C$ is a bifunctor, $\1$ an object (to be called neutral),
$\alpha: \otimes \circ ( \otimes \times \Id) \to \otimes \circ (\Id
\times \otimes), \lambda: \1 \otimes - \to \Id, \rho: - \otimes \1
\to \Id$ are natural isomorphisms such that the associativity and
unitarity constrains hold, or equivalently the pentagon and the
triangle diagrams are commutative. A tensor category is a locally finite abelian rigid monoidal category in which the neutral object is simple.
Natural examples of tensor categories are the categories of finite dimensional $H$-modules and
$H$-comodules where $H$ is a Hopf algebra equipped with an invertible antipode. Recall
that, if $U$ and $V$ are right $H$-comodules, let $U \otimes V$ be
the usual tensor product of $\k$-spaces and the comodule structure
is given by $u \otimes v \mapsto u_0 \otimes v_0 \otimes u_1v_1,$
where we use the Sweedler notation $u \mapsto u_0 \otimes u_1$ for
comodule structure maps. The neutral object is the trivial comodule
$\k$ with comodule structure map $k \mapsto k \otimes 1.$ On the
other hand, by the reconstruction formalism, tensor categories with
fiber functors are coming in this manner. For more details on tensor categories, see \cite{egno}.

\section{The Green ring of a cyclic quiver}

\subsection{Hopf structures over a cyclic quiver}
Let $G=\langle g | g^n=1\rangle$ be a cyclic group of order $n$ and let
$\mathcal{Z}$ denote the Hopf quiver $Q(G,g).$ The quiver
$\mathcal{Z}$ is a cyclic quiver of form $$ \xy {\ar
(0,0)*{1}; (30,-10)*{g}}; {\ar (-30,-10)*{g^{n-1}}; (0,0)*{1}}; {\ar
(30,-10)*{g}; (3,-10)*{ \cdots  \ }}; {\ar (-3,-10)*{\ \cdots };
(-30,-10)*{g^{n-1}}}
\endxy $$ If $n=1,$ then
$\mathcal{Z}$ is the one-loop quiver, that is, consisting of one
vertex and one loop. It is easy to see that such a quiver provides
only the familiar divided power Hopf algebra in one variable, which
is isomorphic to the polynomial algebra in one variable \cite{hsaq1}.

From now on we assume $n > 1.$ For each integer $i \in \Z_n,$ let $a_i$ denote the
arrow $g^i \longrightarrow g^{i+1}.$ Let $p_i^l$ denote the path
$a_{i+l-1} \cdots a_{i+1}a_i$ of length $l.$ Then $\{p_i^l \ | \ i \in \Z_n, \ l \ge 0 \}$ is a basis of $\k \mathcal{Z}.$ We also need some notations of Gaussian binomials. For
any $q \in \k,$ integers $l,m \ge 0,$ let
\[
l_q=1+ q + \cdots +q^{l-1}, \ \ l!_q=1_q \cdots l_q, \ \
\binom{l+m}{l}_q=\frac{(l+m)!_q}{l!_qm!_q}.
\]
When $1\ne q\in \k$ is an $n$-th root of unity of multiplicative order $d,$
\begin{equation}
{l+m \choose l}_q = 0 \ \ \text{if and only if} \ \
\big[\frac{l+m}{d} \big]-\big[ \frac{m}{d}\big]-\big[ \frac{l}{d}
\big]>0,
\end{equation}
where $[x]$ means the integer part of $x$.

Now we recall the graded Hopf structures on $\k \mathcal{Z}.$ By
\cite{cr2}, they are in one-to-one correspondence with the
$\k G$-module structures on $\k a_0,$ and in turn with the set of $n$-th
roots of unity. For each $q \in \k$ with $q^n=1,$ let $g.a_0=qa_0$
define a $\k G$-module. The corresponding $\k G$-Hopf bimodule is $\k G
\otimes_{\k G} \k a_0 \otimes \k G = \k a_0 \otimes \k G.$ We identify
$a_i=a_0 \otimes g^i.$ This is how we view $\k \mathcal{Z}_1$ as a
$\k G$-Hopf bimodule. The following path multiplication formula
\begin{equation} \label{e3.1}
p_i^l \cdot p_j^m = q^{im} {{l+m}\choose l} _q p_{i+j}^{l+m}
\end{equation}
was given in \cite{cr2} by induction. In particular,
\begin{equation} \label{e3.2}
g \cdot p_i^l=q^lp_{i+1}^l, \quad  p_i^l \cdot g=p_{i+1}^l, \quad
a_0^l=l_q!p_0^l \ .
\end{equation}
For each $q,$ the corresponding graded Hopf algebra is denoted by
$\k \mathcal{Z}(q).$ The following lemma gives the algebra structure by generators and relations.

\begin{lemma}\emph{(\cite[Lemma 3.2 ]{hsaq1}) }
As an algebra, $\k \mathcal{Z}(q)$ can be presented by generators and relations as follows:
\begin{itemize}
\item[(1)] If $q=1,$ then the generators are $g$ and $a_0$, and the relations are $g^n=1$ and $ga_0=a_0g.$
\item[(2)] If $\ord(q)=d>1,$ then the generators are $g, \ a_0$ and $p^d_0$, and the relations are $g^n=1, \ ga_0=qa_0g, \ a_0^d=0, \ a_0p_0^d=p_0^da_0$ and $gp^d_0=p_0^dg.$
\end{itemize}
\end{lemma}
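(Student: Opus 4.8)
The plan is to establish the algebra presentation of $\k\mathcal{Z}(q)$ directly from the path multiplication formulae \eqref{e3.1} and \eqref{e3.2}, handling the two cases $q=1$ and $\ord(q)=d>1$ separately. In both cases I would first verify that the listed elements do generate $\k\mathcal{Z}(q)$ as an algebra, then check that the listed relations hold, and finally argue that these relations suffice by a dimension/normal-form count against the known basis $\{p_i^l \mid i\in\Z_n,\ l\ge 0\}$.

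For the generation step, the key observation from \eqref{e3.2} is that $a_0^l = l!_q\, p_0^l$, so whenever all the scalars $l!_q$ are nonzero the powers of $a_0$ recover the paths $p_0^l$ based at the identity vertex, and multiplying on the right by powers of $g$ (using $p_0^l\cdot g = p_1^l$ and its iterates) sweeps out every $p_i^l$. When $q=1$ all $l!_q = l!\ne 0$, so $g$ and $a_0$ alone generate everything; this is case (1). When $\ord(q)=d>1$, the scalar $d!_q$ vanishes (since $d_q = 1+q+\cdots+q^{d-1}=0$), so $a_0^d = 0$ and powers of $a_0$ only reach paths of length $<d$; this is exactly why the extra generator $p_0^d$ must be adjoined to climb past length $d$, and one checks that products of $a_0, g, p_0^d$ then produce all $p_i^l$.

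Next I would verify the relations. The commutation relations $ga_0 = q a_0 g$ (resp. $ga_0=a_0g$ when $q=1$) are immediate from \eqref{e3.2} with $l=1$, reading $g\cdot a_0 = q a_1$ and $a_0\cdot g = a_1$. The centrality relations $a_0 p_0^d = p_0^d a_0$ and $g p_0^d = p_0^d g$ in case (2) follow from \eqref{e3.1} by comparing the scalar prefactors $q^{im}\binom{l+m}{l}_q$: the point is that $p_0^d$ commutes because the relevant Gaussian binomials with a multiple of $d$ in one slot collapse to powers of the $q$-factor that match on both sides, and $g$ commutes with $p_0^d$ because $q^d=1$. The relation $a_0^d=0$ is already noted above.

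The main obstacle, and the real content, is the final completeness step: showing the stated relations are a defining set, i.e. that the abstract algebra $A$ presented by these generators and relations is not larger than $\k\mathcal{Z}(q)$. For this I would produce a spanning set of $A$ indexed in bijection with the basis $\{p_i^l\}$. In case (1), using $g^n=1$ and $ga_0=a_0g$ I can write every element of $A$ as a $\k$-combination of $a_0^l g^i$ with $i\in\Z_n$ and $l\ge 0$, matching $p_0^l\cdot g^i = p_i^l$ up to the known nonzero scalar; since these are linearly independent in $\k\mathcal{Z}(q)$, the surjection $A\twoheadrightarrow \k\mathcal{Z}(q)$ is an isomorphism. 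In case (2), using $a_0^d=0$ together with the centrality of $p_0^d$ and the commutation $ga_0=qa_0g$, I can put every element of $A$ into the normal form $(p_0^d)^{k}a_0^{r}g^{i}$ with $0\le r < d$, $k\ge 0$, $i\in\Z_n$; writing $l = kd + r$ this again matches the basis paths $p_i^l$ bijectively (after tracking the nonzero scalars arising from \eqref{e3.1} when splitting $p_0^l$ as a product of copies of $p_0^d$ and $a_0^r$). Establishing that these reductions are well-defined and that no further collapse occurs is precisely where one must invoke that $(p_0^d)^k$ itself never becomes a zero-divisor victim — which is guaranteed because the corresponding Gaussian binomial coefficients $\binom{kd+r}{\,\cdot\,}_q$ stay nonzero by criterion (3.1). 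Since this lemma is quoted from \cite{hsaq1}, I would cite that normal-form verification there rather than reproduce it, but the outline above is the route I would take to prove it from scratch.
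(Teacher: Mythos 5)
Your proposal is correct, but note that the paper itself offers no proof of this statement at all: the lemma is quoted verbatim from \cite{hsaq1} (Lemma 3.2 there), so there is no in-paper argument to compare against. Your reconstruction --- generation via $a_0^l=l!_qp_0^l$ and right multiplication by $g$, adjoining $p_0^d$ exactly because $d!_q=0$, verifying the relations from the multiplication formulae (3.2)--(3.3), and then the normal-form count matching $(p_0^d)^ka_0^rg^i$ (resp.\ $a_0^lg^i$) against the path basis with all scalars kept nonzero by criterion (3.1) --- is sound and is essentially the standard argument one finds in the cited source.
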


\subsection{Tensor category associated to $\k \mathcal{Z}(q)$}
The aim of this section is to compute the Green ring of the tensor category of finite dimensional right $\k \mathcal{Z}(q)$-comodules. As mentioned in Subsection 2.3, as a category it is equivalent to the category of locally nilpotent finite dimensional representations of the quiver $\mathcal{Z}.$ For each $i \in \Z_n$ and integer $l \ge 0,$ let $V(i,l)$ be a vector space of dimension $l+1$ with a basis $\{v_m^i\}_{0 \leq m \leq l}.$  $V(i,l)$ is made into a representation of $\mathcal{Z}$ by putting $V(i,l)_j$ the $\k$-span of $\{v_m^i | i+m=j \ \mathrm{in} \ \Z_n\}$ and letting $V(i,l)_{a_j}$ maps $v_m^i$ to
$v^i_{m+1}$ if $i+m=j \ \mathrm{in} \ \Z_n.$ Here by convention $v^i_k$ is understood as $0$ if $k>l.$ Note that $V(i,l)$ is viewed as a $\k \mathcal{Z}(q)$-comodule by
\begin{eqnarray}
\d :V(i,l) & \longrightarrow & V(i,l)\otimes \k \mathcal{Z}(q) \notag \\
v^i_m & \mapsto & \sum^l_{j=m}v^i_j\otimes p_{i+m}^{j-m}.
\end{eqnarray}
Using (3.1), the comodule structure map of $V(i,l)\otimes V(j,m)$ is given by
\begin{equation}
\d(v^i_s\otimes v^j_t)=\sum^l_{x=s}\sum^m_{y=t}q^{(i+s)(y-t)}\binom{x+y-s-t}{x-s}_qv^i_x\otimes v^j_y\otimes p_{i+j+s+t}^{x+y-s-t}.
\end{equation}
It is well known that $\{V(i,l) | i \in \Z_n, \ l \ge 0\}$ is a complete set of indecomposable objects of $\rep^{ln}(\mathcal{Z}),$ hence a complete set of finite dimensional indecomposable $\k\mathcal{Z}(q)$-comodules.

For application in latter computations, we also view $V(i,l)\otimes V(j,m)$ as a rational module of $\gr(\k\mathcal{Z}(q))^*,$  the graded dual algebra of $\k\mathcal{Z}(q).$ The module structure map is given by
\begin{equation}
(p_e^f)^*(v^i_s\otimes v^j_t)=\delta_{e,i+j+s+t}\sum_{x+y=f}\sum_{x=0}^{l-s}\sum_{y=0}^{m-t}q^{(i+s)(f-x)}\binom{f}{x}_qv^i_{s+x}\otimes v^j_{t+y}.
\end{equation}
Note that $\gr(\k\mathcal{Z}(q))^*$ is actually the path algebra associated to $\mathcal{Z}(q),$ so we have
\begin{equation}
(p_e^f)^*(v_s^i\otimes v_t^j)=(p_{e+k}^{f-k})^*(p_e^k)^*(v_s^i\otimes v_t^j)
\end{equation}
for all $0\leq k\leq f.$

From now on we denote by $\C_q$ the tensor category of $\k\mathcal{Z}(q)$-comodules for brevity and by $\R(\C_q)$ its Green ring.

\subsection{The case of $q=1$}
As $q=1,$ the Gaussian binomial coefficients shrink to the usual ones, i.e., $\binom{f}{x}_1 = \binom{f}{x}$. We start with some useful lemmas.
\begin{lemma}
$V(1,0)\otimes V(i,l) = V(i+1,l) = V(i,l)\otimes V(1,0), \quad V(1,0)^{\otimes n}=V(0,0)$ in $\C_1.$
\end{lemma}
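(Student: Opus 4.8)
The plan is to exhibit explicit comodule isomorphisms realizing each identity and then obtain the order statement by induction. Conceptually, $V(1,0)$ is the one-dimensional comodule attached to the group-like $g$, i.e. the invertible simple object of $\C_1$, and the three formulas say that tensoring by it (on either side) shifts the starting vertex index $i \mapsto i+1$ while preserving the length $l$, and that this invertible object has order $n$. Since in the Green ring isomorphic objects are identified, it suffices to produce comodule isomorphisms, and the natural candidates are the bijections of distinguished bases $v_0^1\otimes v_t^i \mapsto v_t^{i+1}$ and $v_s^i\otimes v_0^1 \mapsto v_s^{i+1}$. First I would check these are comodule morphisms; being bijective on bases, they are then automatically isomorphisms.

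The key step is a specialization of the tensor comodule structure map (3.4). Setting $q=1$ collapses every Gaussian binomial to an ordinary one and removes the scalar $q^{(i+s)(y-t)}$. Taking the left factor to be $V(1,0)$ forces $s=0$ and kills one of the two summation indices, so the double sum degenerates to a single sum in which each surviving binomial equals $1$, giving
\[
\d(v_0^1\otimes v_t^i)=\sum_{y=t}^{l} v_0^1\otimes v_y^i\otimes p_{i+1+t}^{y-t}.
\]
Applying the proposed map in the first tensor slot turns the right-hand side into $\sum_{y=t}^{l} v_y^{i+1}\otimes p_{i+1+t}^{y-t}$, which is exactly $\d(v_t^{i+1})$ as computed from the defining comodule structure (3.3) of $V(i+1,l)$. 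This verifies $V(1,0)\otimes V(i,l)\cong V(i+1,l)$. The right-hand identity $V(i,l)\otimes V(1,0)\cong V(i+1,l)$ is the mirror computation: now the right factor has length $0$, forcing $t=0$, and the collapsed sum again matches (3.3) under $v_s^i\otimes v_0^1 \mapsto v_s^{i+1}$.

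Finally, for $V(1,0)^{\otimes n}=V(0,0)$ I would induct on the number of factors using the identity just proved, obtaining $V(1,0)^{\otimes k}\cong V(k,0)$ for all $k\ge 0$, where the first index is read in $\Z_n$; taking $k=n$ gives $V(n,0)=V(0,0)$ since $g^n=1$. I do not expect a genuine obstacle here: the only care required is the index bookkeeping modulo $n$ and confirming that the degenerate form of (3.4) reproduces (3.3) term by term, so that the claimed basis maps are honest comodule morphisms rather than mere linear isomorphisms.
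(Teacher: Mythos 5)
Your proposal is correct and is essentially the paper's own argument: the paper defines the very same basis map $F(v_0^1\otimes v_s^i)=v_s^{i+1}$ (and its mirror) and asserts the comodule-isomorphism verification is routine, while you simply carry out that verification explicitly via the $q=1$ specialization of (3.4) and handle the power $V(1,0)^{\otimes n}=V(0,0)$ by the same induction. No gaps; your write-up just makes explicit what the paper leaves to the reader.
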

\begin{proof}
Define $F: V(1,0)\otimes V(i,l)\longrightarrow V(i+1,l)$ by $F(v^1_0\otimes v^i_s)=v^{i+1}_s.$ It is easy to verify that the map $F$ is an isomorphism in $\C_1.$ Similarly one can prove the remaining equalities.
\end{proof}

\begin{lemma}
\begin{equation}
V(0,1)\otimes V(0,l)=V(0,l+1)\oplus V(1,l-1)=V(0,l)\otimes V(0,1).
\end{equation}
\end{lemma}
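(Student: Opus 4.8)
The plan is to exhibit, directly in $\C_1\simeq\rep^{ln}(\mathcal{Z})$, an explicit isomorphism realizing the claimed decomposition, working with the representation structure of (3.6) since it makes the arrow action transparent. For $q=1$ the Gaussian binomials collapse to ordinary ones, so (3.6) shows that on $V(0,1)\otimes V(0,l)$ the arrow leaving each vertex acts by $v_0^0\otimes v_t^0\mapsto v_0^0\otimes v_{t+1}^0+v_1^0\otimes v_t^0$ and $v_1^0\otimes v_t^0\mapsto v_1^0\otimes v_{t+1}^0$, with the convention that a symbol vanishes as soon as its lower index exceeds the relevant length. The whole argument is a matter of tracking these two rules and then checking that the resulting map is bijective.

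First I would build the summand $V(0,l+1)$ by sending its cyclic generator to $v_0^0\otimes v_0^0$ (the unique basis vector with both lower indices $0$) and extending along the arrows. Iterating the action above---equivalently applying $(p_0^k)^*$, which by (3.7) is the $k$-fold composite of arrows, so no factorials intervene---I expect
\[
(p_0^k)^*(v_0^0\otimes v_0^0)=v_0^0\otimes v_k^0+k\,v_1^0\otimes v_{k-1}^0,\qquad 0\le k\le l+1,
\]
a chain of $l+2$ vectors annihilated by the next arrow; this is exactly a subrepresentation isomorphic to $V(0,l+1)$.

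Next I would produce a complementary generator from the integer level $s+t=1$, i.e.\ a combination of $v_0^0\otimes v_1^0$ and $v_1^0\otimes v_0^0$ whose arrow-orbit has length precisely $l$, so as to generate $V(1,l-1)$ and not a longer string. Computing $(p_1^k)^*$ on each of these two vectors shows that $v_0^0\otimes v_1^0-l\,v_1^0\otimes v_0^0$ survives after $l-1$ arrows but is killed after $l$, with orbit
\[
v_0^0\otimes v_{k+1}^0+(k-l)\,v_1^0\otimes v_k^0,\qquad 0\le k\le l-1,
\]
a copy of $V(1,l-1)$. Pinning down the coefficient $-l$---equivalently solving the single linear condition that forces the $l$-th arrow image to vanish---is the one genuinely delicate point; the rest is bookkeeping.

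Finally I would assemble these two chains into one map $F\colon V(0,l+1)\oplus V(1,l-1)\to V(0,1)\otimes V(0,l)$. By construction $F$ preserves the $\Z_n$-grading and commutes with every arrow, hence is a morphism in $\C_1$, and since the total dimension is $2l+2=\dim\big(V(0,1)\otimes V(0,l)\big)$ it remains only to check that the listed images are linearly independent. I would do this grading-wise, but grouping the vectors by the \emph{integer} value of $s+t$ rather than its class mod $n$---this is what makes the argument insensitive to whether the string wraps around the cycle when $n$ is small. Each interior level $s+t=k$ then contributes the two vectors $v_0^0\otimes v_k^0+k\,v_1^0\otimes v_{k-1}^0$ and $v_0^0\otimes v_k^0+(k-1-l)\,v_1^0\otimes v_{k-1}^0$, independent because $\det\left(\begin{smallmatrix}1&k\\1&k-1-l\end{smallmatrix}\right)=-(l+1)\neq0$, while the two extreme levels contribute one vector each. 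This yields the first equality, and the second equality $V(0,l)\otimes V(0,1)=V(0,l+1)\oplus V(1,l-1)$ follows from the symmetric computation with the two tensor factors interchanged.
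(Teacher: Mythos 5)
Your proof is correct and takes essentially the same route as the paper: the two chains you generate are precisely the images of the paper's embeddings $\phi_1,\phi_2$ (your orbit vectors $v_0^0\otimes v_{k+1}^0+(k-l)\,v_1^0\otimes v_k^0$ are exactly $\phi_2(v_k^1)=v_0^0\otimes v_{k+1}^0-(l-k)v_1^0\otimes v_k^0$), only verified through the dual arrow action of (3.5)--(3.7) rather than the coaction $\delta$, and you conclude by the same dimension count. Your level-by-level determinant check is just a more explicit version of the paper's observation that the two images intersect trivially.
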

\begin{proof}
 Define  maps
\begin{eqnarray*}
\phi_1 :V(0,l+1) & \longrightarrow  & V(0,1)\otimes V(0,l) \\
    v_0^0        & \mapsto          & v_0^0\otimes v_0^0\\
    v_i^0        & \mapsto          & v^0_0\otimes v^0_i+iv^0_1\otimes v^0_{i-1}\\
    v_{l+1}^0    & \mapsto          & (l+1)v^0_1\otimes v^0_l
\end{eqnarray*}
where $i=1,2, \cdots, l$ and
\begin{eqnarray*}
\phi_2:V(1,l-1) & \longrightarrow  & V(0,1)\otimes V(0,l) \\
   v^1_j        & \mapsto          & v^0_0\otimes v^0_{j+1}-(l-j)v^0_1\otimes v^0_{j}
\end{eqnarray*}
where $0\leq j\leq l-1.$
Now we verify that the two maps are comodule monomorphisms. Obviously the two maps are injective. Using (3.4) and the definition of $\phi_1$ we have
\begin{equation*}
\begin{split}
\delta(\phi_1(v_0^0))&=\delta(v_0^0\otimes v_0^0 )\\
&=\sum_{y=0}^{l}v_0^0\otimes v_y^0\otimes p_0^y +\sum_{y=0}^{l}(y+1)v_1^0\otimes v_y^0\otimes p_0^{y+1}\\
&=(\phi_1\otimes id)\delta(v_0^0),
\end{split}
\end{equation*}
\begin{equation*}
\begin{split}
\delta(\phi_1(v_j^0))&=\delta(v_0^0\otimes v_j^0 +jv_1^0\otimes v_{j-1}^0)\\
&=\sum_{x=j}^{l+1}\bigg[v_0^0\otimes v_x^0\otimes p_j^{x-j} +(x-j)v_1^0\otimes v_{x-1}^0\otimes p_j^{x-j}\bigg] + \sum_{y=j}^{l+1}jv_1^0\otimes v_{y-1}^0\otimes p_i^{y-j}\\
&=\sum_{x=j}^{l+1}\bigg[v_0^0\otimes v_x^0 + xv_1^0\otimes v_{x-1}^0\bigg]\otimes p_j^{x-j}\\
&=(\phi_1\otimes id)\delta(v_j^0)
\end{split}
\end{equation*}for $1\leq j\leq l$  and
\begin{equation*}
\delta(\phi_1(v_{l+1}^0))=\delta(v_1^0\otimes v_l^0)=v_1^0\otimes v_l^0\otimes p_{l+1}^0=(\phi_1\otimes id)\delta(v_{l+1}^0).
\end{equation*}
So we verified that $\phi_1$ is indeed a comodule monomorphism. Similarly one proves that $\phi_2$ is also a comodule monomorphism. Considering the indices of the basis of
$\phi_1(V(0,l+1))$ and $\phi_2(V(1,l-1))$, we can see that  $\phi_1(V(0,l+1))\cap \phi_2(V(1,l-1))=\{0\}.$
Then we obtain that $V(0,l+1)\oplus V(1,l-1)\cong \phi_1(V(0,l+1))\oplus \phi_2(V(1,l-1)) \subset V(0,1)\otimes V(0,l)$.  By comparing the dimensions we obtain the equality.

Define maps
\begin{eqnarray*}
\psi_1 :V(0,l+1) & \longrightarrow  & V(0,l )\otimes V(0,1) \\
    v_0^0        & \mapsto          & v_0^0\otimes v_0^0\\
    v^0_j        & \mapsto          & v^0_j\otimes v^0_0+jv^0_{j-1}\otimes v^0_1\\
    v_{l+1}^0    & \mapsto          & (l+1)v^0_l\otimes v^0_1
\end{eqnarray*}
for $0\leq j\leq l$ and
\begin{eqnarray*}
\psi_2:V(1,l-1) & \longrightarrow  & V(0,1)\otimes V(0,l) \\
   v^1_j        & \mapsto          & v^0_{j+1}\otimes v^0_0+(l-j)v^0_{j}\otimes v^0_1
\end{eqnarray*}
for $0\leq j\leq l-1.$
One can prove the second equality in a similar manner.
\end{proof}

For the convenience of later computations, now we make a convention. We will understand the identity in Lemma 3.3 as \[ V(0,l+1)=V(0,1)\otimes V(0,l) - V(1,l-1)\] thanks to the Krull-Schmidt theorem. Similar identities which interpret the direct sum decomposition of indecomposable objects by suitable subtraction will be used freely in the rest of the paper.

Now we are ready to decompose $V(0,l)\otimes V(0,m).$

\begin{theorem}
\begin{equation*}
V(0,l)\otimes V(0,m)=
\begin{cases} \bigoplus_{i=0}^lV(i,l+m-2i), & l\leq m; \\ \bigoplus_{i=0}^mV(i,l+m-2i), & l> m.
\end{cases}
\end{equation*}
\end{theorem}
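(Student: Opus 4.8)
The plan is to fix $m$ and induct on $l$ in the regime $l \le m$, treating the case $l > m$ afterwards by the mirror argument built from the right-hand equalities in Lemmas 3.2 and 3.3. The asserted decomposition is the quiver analogue of the classical $sl_2$ Clebsch--Gordan rule, and a preliminary dimension count, namely
\[ \dim\bigl(V(0,l)\otimes V(0,m)\bigr) = (l+1)(m+1) = \sum_{i=0}^{\min(l,m)}(l+m-2i+1), \]
confirms that the two sides have the same total dimension, so it suffices to exhibit the indecomposable summands on the right.

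First I would record an auxiliary shift formula: for every $i \in \Z_n$ and every $k \ge 0$,
\[ V(0,1) \otimes V(i,k) = V(i,k+1) \oplus V(i+1,k-1). \]
This follows by combining Lemmas 3.2 and 3.3. Writing $V(i,k) = V(1,0)^{\otimes i}\otimes V(0,k)$ via the shift in Lemma 3.2, and using that $V(0,1)$ commutes with $V(1,0)^{\otimes i}$ (again by Lemma 3.2, since $V(1,0)\otimes V(0,1) = V(0,1)\otimes V(1,0)$), one transports the decomposition $V(0,1)\otimes V(0,k) = V(0,k+1)\oplus V(1,k-1)$ of Lemma 3.3 through the index shift.

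The base cases $l=0$ (where $V(0,0)=\1$) and $l=1$ (which is exactly Lemma 3.3) are immediate. For the inductive step I would invoke the Krull--Schmidt convention $V(0,l+1) = V(0,1)\otimes V(0,l) - V(1,l-1)$ and associativity to write
\[ V(0,l+1)\otimes V(0,m) = V(0,1)\otimes\bigl(V(0,l)\otimes V(0,m)\bigr) - V(1,l-1)\otimes V(0,m). \]
Applying the inductive hypothesis to $V(0,l)\otimes V(0,m)$ and then the auxiliary shift formula termwise expands the first term into two $\Z_n$-graded families of summands, while the second term is computed from the hypothesis for $V(0,l-1)\otimes V(0,m)$ after factoring $V(1,l-1) = V(1,0)\otimes V(0,l-1)$ and shifting indices by Lemma 3.2.

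The main obstacle is the index bookkeeping at this point rather than anything conceptual. After the expansion one has three $\Z_n$-graded families, and the claim reduces to checking that, under the reindexing $j=i+1$, the family produced by the lower branch of the shift formula cancels against the subtracted family $V(1,l-1)\otimes V(0,m)$ up to exactly one leftover summand $V(l+1,\,m-l-1)$ --- precisely the term needed to promote $\bigoplus_{i=0}^{l}$ to $\bigoplus_{i=0}^{l+1}$. I would carry this out by a telescoping reindexing and verify that the boundary term uses the inequality $l+1 \le m$ to keep its length $m-l-1$ nonnegative. Finally, for $l > m$ the identical induction run with the right-tensoring versions of Lemmas 3.2 and 3.3 (equivalently, the commutativity $V(0,l)\otimes V(0,m) \cong V(0,m)\otimes V(0,l)$ visible from those lemmas) yields $\bigoplus_{i=0}^{m}V(i,l+m-2i)$.
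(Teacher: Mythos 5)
Your proposal is correct and takes essentially the same route as the paper's own proof: an induction on $l$ (for $l\le m$) driven by the Krull--Schmidt subtraction $V(0,l+1)=V(0,1)\otimes V(0,l)-V(1,l-1)$, expansion via Lemmas 3.2 and 3.3, and cancellation leaving the single new summand $V(l+1,m-l-1)$, with the $l>m$ case dispatched symmetrically. The additions you make (the dimension count, the explicit shift formula $V(0,1)\otimes V(i,k)=V(i,k+1)\oplus V(i+1,k-1)$, and the commutativity argument for $l>m$) merely spell out steps the paper leaves implicit.
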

\begin{proof}
We only prove the case with $l\leq m$ as  the other case is similar.
Induce on $l.$ For $l=1,$  the claim is Lemma 3.3. Now assume $l \ge 2$ and $V(0,j)\otimes V(0,m)= \bigoplus_{i=0}^jV(i,j+m-2i)$ if $j<l$.
Then by Lemma 3.3 we have
\begin{equation*}
\begin{split}
V(0,l)\otimes V(0,m)
 &=\big[V(0,1)\otimes V(0,l-1)-V(1,l-2)\big]\otimes V(0,m)\\
 &=V(0,1)\otimes \bigg[\bigoplus_{i=0}^{l-1}V(i,l+m-1-2i)\bigg]-\bigoplus_{i=0}^{l-2}V(1+i,l+m-2-2i)\\
 &=\bigoplus_{i=0}^lV(i,l+m-2i).
\end{split}
\end{equation*}

The proof is completed.
\end{proof}

In order to present the ring $\R(\C_1),$ we need the generalized Fibonacci polynomials used in \cite{cvoz}.
\begin{definition}
For integer $k \ge 0$, define the polynomial $f_k(x,y) \in \Z[x,y]$ inductively by
\begin{itemize}
\item[1)]$f_0(x,y)=1$ and $f_1(x,y)=y$,  \\
\item[2)]$f_k(x,y)=yf_{k-1}(x,y)-xf_{k-2}(x,y)$ if $k\geq 2.$
\end{itemize}
\end{definition}
The general formula for $f_k(x,y)$ is given by \cite[Lemma 3.11]{cvoz} as follows
\begin{equation}
f_k(x,y)=\sum_{i=0}^{[\frac{k}{2}]}(-1)^i\binom{k-i}{i}x^iy^{k-2i},
\end{equation}
where $[\frac{k}{2}]$ means the integer part of $\frac{k}{2}.$

We need the following fact about $f_k(x,y)$ whose proof is easy and hence omitted.
\begin{lemma}
$\{x^if_k(x,y)|k,i \ge 0\}$ is a $\Z$-basis of $\Z[x,y].$
\end{lemma}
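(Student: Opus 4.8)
The plan is to show that the transition between the proposed family $\{x^i f_k(x,y) \mid i,k \ge 0\}$ and the standard monomial basis $\{x^i y^k \mid i,k \ge 0\}$ of $\Z[x,y]$ is governed by a unitriangular matrix, which is automatically invertible over $\Z.$ The correct grading for this is the \emph{total degree}, since the defining recursion makes $f_k$ inhomogeneous but with a single dominant term. Concretely, I would start from the closed formula (3.8) and multiply by $x^i$ to obtain
\begin{equation*}
x^i f_k(x,y) = \sum_{j=0}^{[k/2]} (-1)^j \binom{k-j}{j}\, x^{i+j} y^{k-2j}.
\end{equation*}

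The key observation is that the monomial $x^{i+j}y^{k-2j}$ appearing in the $j$-th summand has total degree $(i+j)+(k-2j) = i+k-j,$ which is strictly decreasing in $j.$ Hence among all monomials occurring in $x^i f_k$ there is a \emph{unique} one of maximal total degree, namely the $j=0$ term $x^i y^k$ (of degree $i+k$), and its coefficient is $\binom{k}{0}=1.$ Thus
\begin{equation*}
x^i f_k(x,y) = x^i y^k + (\text{terms of strictly smaller total degree}).
\end{equation*}
Since the assignment $(i,k)\mapsto x^i y^k$ is a bijection from the index set onto all monomials, the elements $x^i f_k$ have pairwise distinct leading monomials, each with leading coefficient $1.$

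To conclude, I would filter $\Z[x,y]$ by total degree: for each $d$ the submodule $\Z[x,y]_{\le d}$ spanned by monomials of degree at most $d$ is free of rank $\binom{d+2}{2}.$ The finite family $\{x^i f_k \mid i+k \le d\}$ lies in $\Z[x,y]_{\le d},$ has exactly $\binom{d+2}{2}$ members, and expresses itself in the monomial basis by a unitriangular matrix (when both index sets are ordered compatibly with total degree), so it is a $\Z$-basis of $\Z[x,y]_{\le d}.$ Taking the union over all $d$ yields the claim for $\Z[x,y].$ The only point requiring genuine care—and the ``hard part'' in an otherwise routine argument—is verifying that the highest-total-degree monomial of each $x^i f_k$ is unique and monic; once this is secured, unitriangularity and hence unimodularity are immediate, giving both linear independence and spanning simultaneously.
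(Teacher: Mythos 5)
Your proof is correct. The paper actually omits this proof entirely (declaring it ``easy and hence omitted''), and your argument---reading off from the closed formula (3.8) that $x^i f_k(x,y) = x^i y^k + (\text{lower total degree})$, so that the change of basis from $\{x^i y^k\}$ to $\{x^i f_k\}$ is unitriangular on each finite filtration piece $\Z[x,y]_{\le d}$---is precisely the standard argument the authors had in mind, carried out completely and with the one genuinely necessary verification (uniqueness and monicity of the top-degree term) done correctly.
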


Now we are in the position to present the Green ring $\R(\C_1).$
\begin{theorem}
 $\R(\C_1) \cong \Z[x,y]/\langle x^n-1\rangle.$
\end{theorem}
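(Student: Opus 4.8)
The plan is to exhibit an explicit ring isomorphism $\Phi\colon \Z[x,y]/\langle x^n-1\rangle \to \R(\C_1)$ by sending $x\mapsto [V(1,0)]$ and $y\mapsto [V(0,1)]$, and then to prove it is a well-defined ring isomorphism using the structural results already established. First I would check that $\Phi$ is well-defined: by Lemma 3.2 we have $[V(1,0)]^{\otimes n}=[V(0,0)]=\mathbf{1}$, the tensor unit, so $x^n-1$ maps to $0$ and $\Phi$ descends to the quotient ring. Since $[V(1,0)]$ is invertible (again Lemma 3.2 gives $[V(1,0)]^{\otimes n}=[V(0,0)]$ with $V(0,0)$ the neutral object), the generator $x$ indeed becomes a unit of order dividing $n$, matching the relation in the quotient.

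Next I would show surjectivity by identifying a natural additive generating set and matching it to the Fibonacci basis. The key observation is that the indecomposables $V(i,l)$ exhaust $\R(\C_1)$ additively, and by Lemma 3.2 every $V(i,l)$ equals $[V(1,0)]^{\otimes i}\otimes V(0,l)=x^i\cdot[V(0,l)]$ in the Green ring, so it suffices to realize each $[V(0,l)]$ inside the image of $\Phi$. For this I would prove by induction on $l$ that $\Phi\big(f_l(x,y)\big)=[V(0,l)]$, where $f_l$ is the generalized Fibonacci polynomial of Definition 3.5. The base cases $f_0=1\mapsto[V(0,0)]$ and $f_1=y\mapsto[V(0,1)]$ are immediate, and the inductive step is exactly the recursion $f_l=y f_{l-1}-x f_{l-2}$: rewriting Lemma 3.3 as $[V(0,1)]\cdot[V(0,l-1)]=[V(0,l)]+[V(1,l-2)]$ and using $[V(1,l-2)]=x\cdot[V(0,l-2)]$ from Lemma 3.2 yields precisely $[V(0,l)]=y\cdot[V(0,l-1)]-x\cdot[V(0,l-2)]$, which matches the defining recursion. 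Hence every indecomposable lies in the image, giving surjectivity.

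Finally I would establish injectivity. Here Lemma 3.7 is the crucial tool: $\{x^i f_k(x,y)\mid i,k\ge 0\}$ is a $\Z$-basis of $\Z[x,y]$, and reducing modulo $x^n-1$ a $\Z$-basis of the quotient is given by $\{x^i f_k(x,y)\mid 0\le i\le n-1,\ k\ge 0\}$. Under $\Phi$ these map to $\{x^i\cdot[V(0,k)]\}=\{[V(i,k)]\mid 0\le i\le n-1,\ k\ge 0\}$, which by the classification recalled before Theorem 3.4 is precisely the set of isomorphism classes of indecomposables, a $\Z$-basis of $\R(\C_1)$. So $\Phi$ carries a $\Z$-basis bijectively onto a $\Z$-basis, hence is an isomorphism of abelian groups; combined with the fact that $\Phi$ is a ring homomorphism (it respects the product, as the relations defining the target are consequences of Lemma 3.2 and Theorem 3.4), this forces $\Phi$ to be a ring isomorphism. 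I expect the main obstacle to be the bookkeeping that makes Lemma 3.7 yield a clean $\Z$-basis of the quotient $\Z[x,y]/\langle x^n-1\rangle$ and that this basis maps exactly onto the indecomposables with the index $i$ correctly reduced modulo $n$ via Lemma 3.2; once that identification is in hand, the ring structure transports automatically and no residual Clebsch--Gordan computation beyond Theorem 3.4 and Lemma 3.3 is needed.
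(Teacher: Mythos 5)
Your proposal is correct and follows essentially the same route as the paper's own proof: the same map $x\mapsto [V(1,0)]$, $y\mapsto [V(0,1)]$, the identity $\Phi(x^if_k(x,y))=[V(i,k)]$ obtained from Lemmas 3.2--3.3 via the Fibonacci recursion, and the matching of the basis $\{x^if_k(x,y)\mid 0\le i\le n-1,\ k\ge 0\}$ of the quotient with the basis of indecomposables $\{[V(i,k)]\}$ of $\R(\C_1)$. The only cosmetic differences are that you define the map directly on the quotient rather than descending from $\Z[x,y]$, and your citation of ``Lemma 3.7'' for the $\Z$-basis fact should read Lemma 3.6.
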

\begin{proof}
Define
\begin{eqnarray*}
\Phi : \Z[x,y] & \longrightarrow & \R(\C) \\
         x  & \mapsto & [V(1,0)], \\
         y  & \mapsto & [V(0,1)].
\end{eqnarray*}
Obviously $\Phi$ is surjective as $[V(1,0)]$ and $[V(0,1)]$ generate $\R(\C)$ by Lemmas 3.2-3.3 and Theorem 3.4.  By the definition of $f_k(x,y)$, Lemma 3.2 and Lemma 3.3 we have
\begin{equation*}
\Phi(x^if_k(x,y))=[V(1,0)]^{i}f_k([V(1,0)],[V(0,1)]) =[V(i,k)]. \quad \quad (*)
\end{equation*}
Note that $\Phi (x^n-1)=0$ as $V(1,0)^{\otimes n}=V(0,0)=\1.$ It follows that $J=\langle x^n-1\rangle \subset \ker\Phi,$ so $\Phi$ induces an epimorphism
\begin{eqnarray*}
\overline{\Phi} : \Z[x,y]/J & \longrightarrow & \R(\C) \\
         \overline{x}  & \mapsto & [V(1,0)], \\
         \overline{y}  & \mapsto & [V(0,1)].
\end{eqnarray*}
Now use Lemma 3.6, it is not hard to see that
$$\{\overline{x^if_k(x,y)}|{0\leq i\leq n-1, \ k \ge 0}\}$$ makes a $\Z$-basis of $\Z[x,y]/J,$ hence $\overline{\Phi}$ is an isomorphism by $(*)$.
\end{proof}

\subsection{The case of $q$ being a nontrivial root of unity}
In this subsection $q$ is set to be a root of unity with multiplicative order $d=\ord(q) \ge 2.$ Similar to Lemma 3.2, we have
\begin{lemma}
$V(1,0)\otimes V(i,l) = V(i+1,l) = V(i,l)\otimes V(1,0), \quad V(1,0)^n=V(0,0)$ in $\C_q.$
\end{lemma}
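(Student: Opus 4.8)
The plan is to construct explicit comodule isomorphisms, exactly mirroring the proof of Lemma 3.2 in the $q=1$ case, but now keeping careful track of the powers of $q$ that appear in the comodule structure map (3.4). The statement asserts three facts: a left action isomorphism $V(1,0)\otimes V(i,l)\cong V(i+1,l)$, a right action isomorphism $V(i,l)\otimes V(1,0)\cong V(i+1,l)$, and the cyclic relation $V(1,0)^{\otimes n}\cong V(0,0)$. Each of these involves the one-dimensional object $V(1,0)$, so the tensor products are again $(l+1)$-dimensional, and the isomorphisms will be the evident index-shifting maps.

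First I would treat $V(1,0)\otimes V(i,l)\cong V(i+1,l)$. The object $V(1,0)$ has a single basis vector $v_0^1$, so a natural candidate is the map $F$ sending $v_0^1\otimes v_s^i\mapsto v_s^{i+1}$. The content is to check that $F$ intertwines the comodule structures. Specializing (3.4) to the case where the first tensor factor is $V(1,0)$ forces $x=s=0$ (since $l=0$ there), collapsing the double sum to a single sum over $y$; the surviving coefficient is $q^{(1+0)(y-t)}\binom{y-t}{0}_q = q^{y-t}$, and the path label is $p_{1+i+t}^{y-t}$. On the other side, applying the structure map (3.3) to $v_t^{i+1}$ in $V(i+1,l)$ gives $\sum_{y=t}^l v_y^{i+1}\otimes p_{(i+1)+t}^{y-t}$, with no extra power of $q$. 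The discrepancy is precisely the factor $q^{y-t}$, which tells me $F$ as written is not quite a comodule map; the fix is to rescale the basis, defining $F(v_0^1\otimes v_s^i)=q^{-?}v_s^{i+1}$ with the exponent chosen to absorb $q^{y-t}$ uniformly. I expect a scaling of the form $v_s^i\mapsto c_s v_s^{i+1}$ with $c_s/c_{s'}$ matching the $q$-power, making $F$ an isomorphism. The right action isomorphism $V(i,l)\otimes V(1,0)\cong V(i+1,l)$ is handled symmetrically, and here I anticipate (from the asymmetry $q^{(i+s)(y-t)}$ in (3.4), where now the second factor contributes nothing) that the required rescaling is \emph{trivial}, i.e.\ $v_s^i\otimes v_0^1\mapsto v_s^{i+1}$ works on the nose.

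For the cyclic relation, I would iterate the left (or right) isomorphism $n$ times: $V(1,0)^{\otimes n}\cong V(0+n,0)=V(0,0)$ as $n\equiv 0$ in $\Z_n$. Since each $V(1,0)$ is one-dimensional and $V(0,0)=\1$ is the neutral object, this is a composition of the isomorphisms already established, with the accumulated $q$-scalars multiplying out; one should check the total scalar is a unit (which it automatically is, being a power of $q\ne 0$), so the composite is an isomorphism.

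The main obstacle will be bookkeeping the powers of $q$ in the structure map (3.4) and pinning down the exact rescaling constants $c_s$ that turn the naive shift maps into genuine comodule morphisms. This is purely a matter of careful computation rather than a conceptual difficulty, which is presumably why the authors phrase the lemma as ``Similar to Lemma 3.2''; the underlying linear-algebraic skeleton is identical to the $q=1$ argument, and only the $q$-twisted scalars change. Once the correct normalizations are found, verifying the intertwining identities via (3.3) and (3.4) is routine, and comparing dimensions (both sides have dimension $l+1$) upgrades the injective comodule maps to isomorphisms.
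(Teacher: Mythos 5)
Your proposal is correct and matches the paper's proof: the paper defines $F_1(q^jv^1_0\otimes v^i_j)=v^{i+1}_j$ (i.e.\ your rescaled map with $c_j=q^{-j}$, which is exactly the solution of your condition $c_t=q^{y-t}c_y$) and $F_2(v^i_j\otimes v^1_0)=v^{i+1}_j$ with no rescaling, precisely as you anticipated from the asymmetric factor $q^{(i+s)(y-t)}$ in (3.4). The iteration argument for $V(1,0)^{\otimes n}=V(0,0)$ is likewise the intended one.
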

\begin{proof}
Define $F_1:V(1,0)\otimes V(i,l)\longrightarrow V(i+1,l)$ by $F_1(q^jv^1_0\otimes v^i_j)=v^{i+1}_j$ and $F_2:V(i,l)\otimes V(1,0)\longrightarrow V(i+1,l)$ by $F_2(v^1_0\otimes v^i_j)=v^{i+1}_j.$ It is easy to verify that $F_1$ and $F_2$ provide the desired equalities.
\end{proof}

The Clebsch-Gordan problem for $C_q$ is much more complicated than that of $\C_{1}.$ We split the problem into several cases.

\begin{lemma}
\begin{equation*}
\begin{split}
& \ V(0,1)\otimes V(0,md+l)\\
=&\begin{cases} V(0,md+l+1)\oplus V(1,md+l-1), & 0\leq l\leq d-2; \\ V(0,(m+1)d-1)\oplus V(1,(m+1)d-1),  &
l=d-1.
\end{cases}\\
=& \ V(0,md+l)\otimes V(0,1).
\end{split}
\end{equation*}

\end{lemma}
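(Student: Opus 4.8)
The plan is to avoid guessing explicit comodule monomorphisms and instead decompose $M:=V(0,1)\otimes V(0,N)$ (write $N=md+l$) directly as a module over the path algebra $\gr(\k\mathcal{Z}(q))^*$, using the action recorded in (3.6)--(3.7); the advantage is that both cases of the lemma will drop out of a single computation. Abbreviate the basis vectors by $(s,t):=v^0_s\otimes v^0_t$ with $0\le s\le 1$, $0\le t\le N$. Specializing (3.6) to $f=1$ and using $\binom{1}{0}_q=\binom{1}{1}_q=1$, the degree-one part of the path algebra acts through the single raising operator
\begin{equation*}
A(s,t)=q^{s}\,(s,t+1)+(s+1,t),
\end{equation*}
where a term is discarded whenever its index leaves the range ($t+1>N$ or $s+1>1$). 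Thus $M$ is graded by $s+t\in\{0,\dots,N+1\}$, i.e. by the vertex of $\mathcal{Z}$, with homogeneous pieces of dimensions $1,2,\dots,2,1$ (total $2N+2$).

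First I would count the indecomposable summands and locate their tops and socles. Since every indecomposable is a uniserial $V(i,k)$ with simple socle, the number of summands equals $\dim\operatorname{soc}M=\dim\ker A$ (a vector lies in the socle iff $A$ annihilates it), and the tops correspond to a homogeneous basis of $M/AM$. A short direct check shows $\ker A$ is two–dimensional, spanned by $(1,N)$ in degree $N+1$ and by $-q\,(0,N)+(1,N-1)$ in degree $N$, while $M/AM$ is one–dimensional in degrees $0$ and $1$ only. Hence there are exactly two summands, with tops at vertices $0$ and $1$; the vertex-$0$ top is $(0,0)$, which generates a summand $V(0,k_1)$, and the remaining summand is $V(1,k_2)$ with $(k_1+1)+(k_2+1)=2N+2$. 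Everything now reduces to computing $k_1$, the largest $k$ with $A^{k}(0,0)\neq 0$.

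The crux is this length computation, and the key observation is that $A=A_1+A_2$ with $A_1(s,t)=q^{s}(s,t+1)$ and $A_2(s,t)=(s+1,t)$ satisfies the $q$-commutation relation $A_1A_2=qA_2A_1$ as operators on $M$ (one must confirm this persists through the boundary truncations). The $q$-binomial theorem then gives $A^{k}=\sum_{j}\binom{k}{j}_qA_2^{\,j}A_1^{\,k-j}$, and since $A_2^{\,j}$ kills everything once $s$ exceeds $1$, applying this to $(0,0)$ collapses to
\begin{equation*}
A^{k}(0,0)=(0,k)+k_q\,(1,k-1),
\end{equation*}
with out-of-range terms dropped. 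For $k\le N$ the term $(0,k)$ survives, so $A^{k}(0,0)\neq0$; for $k=N+1$ only the socle vector $(1,N)$ remains, giving $A^{N+1}(0,0)=(N+1)_q\,(1,N)$. This is exactly where the Gaussian coefficient governs the split: $(N+1)_q=0$ iff $d\mid N+1$ iff $l=d-1$. Hence for $0\le l\le d-2$ we get $k_1=N+1$, so $M=V(0,N+1)\oplus V(1,N-1)$, while for $l=d-1$ we get $k_1=N$, so $M=V(0,N)\oplus V(1,N)=V(0,(m+1)d-1)\oplus V(1,(m+1)d-1)$.

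The second equality $V(0,N)\otimes V(0,1)=V(0,1)\otimes V(0,N)$ I would obtain by rerunning the same argument with the two tensor factors interchanged (equivalently, via the second instance of the action in (3.6)). The main obstacle is not the algebra but the bookkeeping at the boundary: one must verify that the relation $A_1A_2=qA_2A_1$ survives the truncations $s\le1$, $t\le N$, so that the $q$-binomial collapse is legitimate and it is precisely $(N+1)_q$—rather than some generic nonzero scalar—that decides whether the long summand has length $N+1$ or $N$. For readers preferring the style of Lemma 3.3, the same conclusion can instead be reached by writing down $q$-analogues of $\phi_1,\phi_2$ (replacing each integer coefficient by its $q$-version) and checking via (3.4) that they are comodule monomorphisms, with the two constructions selected according to whether $(N+1)_q$ vanishes.
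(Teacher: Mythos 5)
Your route is genuinely different from the paper's. The paper proves this lemma by writing down explicit comodule monomorphisms $\phi_1,\phi_2$ from the two claimed summands into $V(0,1)\otimes V(0,md+l)$ (for $l=d-1$ these are $v_k^0\mapsto v_0^0\otimes v_k^0+k_q\,v_1^0\otimes v_{k-1}^0$ and $v_k^1\mapsto q^k v_1^0\otimes v_k^0$), verifying them against the comodule structure of the tensor product, checking the images meet trivially, and comparing dimensions; the two cases $0\le l\le d-2$ and $l=d-1$ require different pairs of maps. You instead work with the dual action (3.6): count summands by $\dim\operatorname{soc}M=\dim\ker A$, locate the tops via $M/AM$, and compute the length of the string through $(0,0)$ by the $q$-binomial collapse $A^k(0,0)=(0,k)+k_q(1,k-1)$, so that both cases drop out of the single scalar $(N+1)_q$. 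The computations you flag all check out: the relation $A_1A_2=qA_2A_1$ does survive the truncations, $\ker A$ is exactly the two vectors you list, and $M/AM$ is concentrated in degrees $0,1$. Note also that $A^k(0,0)$ is precisely the paper's $\phi_1(v_k^0)$, so your calculation derives the maps that the paper has to guess; this is what your approach buys.

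There is, however, one step that is asserted rather than proved, and it is not automatic: ``the vertex-$0$ top is $(0,0)$, which generates a summand $V(0,k_1)$'' with $k_1=\max\{k:A^k(0,0)\neq0\}$. A cyclic submodule generated by an element outside the radical need not be a direct summand: in $V(0,2)\oplus V(1,0)$, with bases $u_0,u_1,u_2$ and $w_0$, the top element $u_1+w_0$ generates a copy of $V(1,1)$, which by Krull-Schmidt is not a summand. So knowing only ``two summands, tops at vertices $0$ and $1$, dimensions summing to $2N+2$'' still allows a priori a decomposition $V(0,c_1)\oplus V(1,c_2)$ with $c_1<c_2$, in which the long string has its top at vertex $1$; for small $n$ the mod-$n$ positions of the socles cannot exclude this, so the identification $c_1=k_1$ genuinely needs an argument. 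The fix is short and uses the integer grading you already set up. Since $A$ raises the degree $s+t$ by one, the top degree is $N+1$, and $M_0=\k\,(0,0)$, the Loewy length of $M$ equals $k_1+1$: indeed $A^{k_1+1}M_0=0$ by definition of $k_1$, while for $j\ge1$ one has $A^{k_1+1}M_j\subseteq M_{j+k_1+1}=0$ because $k_1\ge N$. Now a uniserial submodule whose length equals the Loewy length of a module that is a direct sum of uniserials always splits off: project onto a summand on which the socle of the submodule survives; that projection is a surjection between uniserials of the same dimension, hence an isomorphism. Therefore $\langle(0,0)\rangle\cong V(0,k_1)$ is a direct summand, its complement is the remaining indecomposable with top at vertex $1$ and dimension $2N+2-(k_1+1)$, and both cases of the lemma follow as you state. (Alternatively, decompose $M$ as a graded module, i.e.\ as a nilpotent representation of the infinite linear quiver covering $\mathcal{Z}$, and push down; then $(0,0)$ spans the lowest graded piece of its summand.) With this justification inserted, your proof is complete and correct.
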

\begin{proof}
We only prove the first equality as the second one is similar. For the case with $0\leq l\leq d-2,$ the proof is essentially the same as that of Lemma 3.3 and so omitted. Now consider $l=d-1.$ In this case we define
\begin{eqnarray*}
\phi_1:V(0,md+d-1) & \longrightarrow & V(0,1)\otimes V(0,md+d-1) \\
         v_k^0  & \mapsto &v_0^0\otimes v_k^0 + k_q v_1^0\otimes v_{k-1}^0
\end{eqnarray*}
and
\begin{eqnarray*}
\phi_2:V(1,md+d-1) & \longrightarrow & V(0,1)\otimes V(0,md+d-1) \\
         v_k^1  & \mapsto & q^kv_1^0\otimes v_k^0
\end{eqnarray*} for $0\leq k\leq md+d-1.$
The verification of $\phi_1$ and $\phi_2$ being comodule monomorphisms is similar to Lemma 3.3. It is also  easy to see that $\phi_1(V(0,(m+1)d-1))\cap \phi_2(V(1,(m+1)d-1))=\{0\},$ so we have $$V(0,(m+1)d-1)\oplus V(1,(m+1)d-1)\subseteq V(0,1)\otimes V(0,md+l)$$ and the claimed equality is obtained by comparing the dimensions.
\end{proof}
\begin{lemma}
Assume $V=\oplus_{s,t}V(s,t)$ and $V(i,j) \subset V$ in $\C_q$. Then there exists an inclusion map $\phi:V(i,j) \To V(s,t)$ for some indecomposable direct summand $V(s,t)$ of $V$ with $t\geq j$ and $i+j=s+t$ in $\Z_n.$
\end{lemma}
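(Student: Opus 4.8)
The plan is to reduce the statement to the behaviour of comodule maps on socles, using that each indecomposable $V(i,l)$ has a simple essential socle. The first step is to record this socle structure. From the coaction $\delta(v_m^i)=\sum_{j=m}^{l}v_j^i\otimes p_{i+m}^{j-m}$ one reads off $\delta(v_l^i)=v_l^i\otimes g^{i+l}$, so $\k v_l^i\cong V(i+l,0)$ is a simple subcomodule, and each $\mathrm{span}\{v_k^i,\dots,v_l^i\}$ is a subcomodule. The crucial point I need is that $\k v_l^i$ is contained in every nonzero subcomodule. To see this I would pass to the action of the path algebra $\gr(\k\mathcal{Z}(q))^*$ dual to the coaction: for a nonzero $w=\sum_{m\ge m_0}a_m v_m^i$ with $a_{m_0}\neq 0$, the functional $(p_{i+m_0}^{l-m_0})^*$ sends $v_{m_0}^i$ to $v_l^i$ and annihilates every $v_m^i$ with $m>m_0$, since the required path length $l-m_0$ does not fit (one needs $k-m=l-m_0$ with $k\le l$, forcing $m\le m_0$). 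Hence $a_{m_0}v_l^i$ lies in the subcomodule generated by $w$, so the socle $\k v_l^i$ is simple and essential. In particular, any comodule map out of $V(i,l)$ that is nonzero on $v_l^i$ is injective, because its kernel is a proper subcomodule and therefore cannot contain the socle.

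The second step exploits that $\phi$ preserves coinvariants. Since $\delta(v_j^i)=v_j^i\otimes g^{i+j}$ and $\phi$ is an injective comodule map, $\phi(v_j^i)$ is a nonzero element satisfying $\delta(\phi(v_j^i))=\phi(v_j^i)\otimes g^{i+j}$. A direct inspection of the coaction shows that in each summand $V(s,t)$ the only elements $w$ with $\delta(w)=w\otimes g^{c}$ for a grouplike $g^c$ are the scalar multiples of $v_t^s$, which is a $g^{s+t}$-coinvariant: the length-one terms $b_m\,v_{m+1}^s\otimes p_{s+m}^1$ force $b_m=0$ for all $m<t$. Consequently the space of $g^{i+j}$-coinvariants of $V=\bigoplus_{s,t}V(s,t)$ is $\bigoplus \k v_t^s$, the sum being over the summands with $s+t=i+j$ in $\Z_n$. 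Writing $\phi(v_j^i)=\sum c_{s,t}\,v_t^s$ and choosing $(s_0,t_0)$ with $c_{s_0,t_0}\neq 0$ therefore already forces $s_0+t_0=i+j$ in $\Z_n$ for that summand.

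Finally I would compose with the projection $\pi_{s_0,t_0}\colon V\to V(s_0,t_0)$ to obtain a comodule map $\psi=\pi_{s_0,t_0}\circ\phi\colon V(i,j)\to V(s_0,t_0)$ with $\psi(v_j^i)=c_{s_0,t_0}\,v_{t_0}^{s_0}\neq 0$. By the first step $\psi$ is injective, hence the desired inclusion, and injectivity forces $\dim V(i,j)\le\dim V(s_0,t_0)$, that is $t_0\ge j$. I expect the only genuine subtlety to be the first step, namely verifying that the socle is essential so that ``nonzero on the socle'' can be upgraded to ``injective''; the coinvariant bookkeeping modulo $n$ that produces the constraint $s+t=i+j$ is routine once the socles of the $V(s,t)$ have been identified, and the wraparound caused by $l\ge n$ does not interfere because the arguments turn on path lengths and the linear independence of distinct paths rather than on the vertices.
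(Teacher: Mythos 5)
Your proof is correct, and it takes a genuinely different route from the paper's. The paper argues from the top of $V(i,j)$: writing the given inclusion $\psi$ with $\psi(v_0^i)=\sum_{s,t}\sum_{l=0}^{t}a_l^{s,t}v_l^s$, it expands the identity $\delta\psi(v_0^i)=(\psi\otimes \mathrm{id})\delta(v_0^i)$ and compares third tensor factors: since the right-hand side contains $\psi(v_j^i)\otimes p_i^j$ with $\psi(v_j^i)\neq 0$, some summand $V(s,t)$ must produce the path $p_i^j$ in its coaction, forcing $p_{s+l}^{t-l}=p_i^j$, hence $t\geq j$ and $s+t=i+j$ in $\Z_n$; the required embedding is then written down explicitly as $\phi(v_k^i)=v_{t-j+k}^s$, with the verification that this is a comodule map left to the reader. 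You instead argue from the bottom: you show $\k v_j^i$ is a simple essential socle using the dual path-algebra action (the same device the paper itself uses in (3.6)--(3.7) and Lemmas 3.11--3.12), identify the grouplike coinvariants of each summand $V(s,t)$ as $\k v_t^s$, and take $\phi=\pi_{s_0,t_0}\circ(\text{inclusion})$ for any summand where the image of $v_j^i$ has a nonzero component; injectivity is then automatic from essentiality of the socle, the constraint $s_0+t_0=i+j$ in $\Z_n$ comes from the coinvariant computation, and $t_0\geq j$ from comparing dimensions. What each approach buys: yours never needs to check that a hand-built map is a comodule morphism (the one step the paper dismisses as ``not hard to verify''), and the essential-socle observation is a reusable structural fact; the paper's is shorter and produces the inclusion in closed form. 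Both handle the wraparound case $t\geq n$ correctly, since a path in the cyclic quiver is determined by its source and its length, which is what your final remark rightly points out.
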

\begin{proof}
Suppose $\psi:V(i,j) \rightarrow V$ is the comodule inclusion and $\psi(v_0^i)=\sum_{s,t}\sum_{l=0}^{t}a_l^{s,t}v_l^s.$ By (3.3) and the fact that $\psi$ is a comodule map, we have
\[ \delta \circ \psi(v_0^i) = \sum_{s,t}\sum_{l=0}^{t}a_l^{s,t}\sum_{m=l}^tv_m^s\otimes p_{s+l}^{m-l}=\sum_{k=0}^j\psi(v_k^i)\otimes p_i^k=(\psi\otimes id) \circ \delta(v_0^i). \]
Comparing the third tensor factors of the two terms in the middle, it follows that there must be some $s,t$ such that $p_{s+l}^{t-l}=p_i^j.$ This obviously leads to $t \geq j$ and $i+j=s+t$ in $\Z_n.$ For the $s,t$ we chose, it is not hard to verify that $\phi:V(i,j) \rightarrow V(s,t)$ with $\phi(v_k^i)=v_{t-j+k}^{s}$ for $0\leq k \leq j$ is a comodule inclusion. We are done.
\end{proof}

\begin{lemma}
For all $0<l\leq d-1,$ we have
\begin{equation*}
 V(0,l)\otimes V(0,md)=V(0,md+l)\oplus \bigoplus_{i=1}^lV(i,md-1)=V(0,md)\otimes V(0,l).
\end{equation*}
\end{lemma}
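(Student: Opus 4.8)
The plan is to induct on $l$, working in the Green ring $\R(\C_q)$ where, following the convention adopted after Lemma 3.3, a direct-sum decomposition into indecomposables is recorded as an identity with subtractions; since every multiplicity that survives at the end will be nonnegative, the Krull--Schmidt theorem will promote the final Green-ring identity back to an honest isomorphism in $\C_q$. The base case $l=1$ is precisely the generic ($0\le l\le d-2$) branch of Lemma 3.9 with inner index $md$, namely $V(0,1)\otimes V(0,md)=V(0,md+1)\oplus V(1,md-1)$. For the inductive step I would use the recursion $V(0,l+1)=V(0,1)\otimes V(0,l)-V(1,l-1)$, which is exactly the $m=0$, $0\le l\le d-2$ case of Lemma 3.9 rearranged; note this is valid precisely on the range $l\le d-2$ that the induction traverses in climbing to $l=d-1$.

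Tensoring the recursion with $V(0,md)$ and applying the inductive hypothesis to $V(0,l)\otimes V(0,md)$ and to $V(0,l-1)\otimes V(0,md)$ (the case $l-1=0$ being trivial since $V(0,0)=\1$), I would be left to evaluate three kinds of products: $V(0,1)\otimes V(0,md+l)$, the family $V(0,1)\otimes V(i,md-1)$ for $1\le i\le l$, and $V(1,l-1)\otimes V(0,md)$. The first is the generic branch of Lemma 3.9 again. For the second and third I would use Lemma 3.8 to peel off $V(1,0)$, writing $V(i,t)=V(1,0)^{\otimes i}\otimes V(0,t)$ and using that $[V(1,0)]$ is invertible and central in $\R(\C_q)$, so that tensoring by $V(1,0)$ merely shifts the first index. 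This reduces everything to products of the form $V(0,1)\otimes V(0,\cdot)$ already governed by Lemma 3.9, together with a single further application of the hypothesis at level $l-1$.

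The crucial point, and the main obstacle, is the product $V(0,1)\otimes V(0,md-1)$. Since $md-1\equiv d-1\pmod d$, this is not the generic branch but the boundary ($l=d-1$) branch of Lemma 3.9, which yields the length-\emph{preserving} decomposition $V(0,md-1)\oplus V(1,md-1)$ rather than a length-increasing one. This resonance at multiples of $d$ is exactly the phenomenon absent from the $q=1$ theory, and getting it right turns $V(0,1)\otimes V(i,md-1)$ into $V(i,md-1)\oplus V(i+1,md-1)$. Assembling the three contributions, the two copies of $V(1,md+l-1)$ cancel, and the overlapping ranges $\bigoplus_{i=1}^{l}$, $\bigoplus_{i=2}^{l+1}$ together with the subtracted $\bigoplus_{i=2}^{l}$ among the $V(\cdot,md-1)$ collapse by a short telescoping count to $\bigoplus_{i=1}^{l+1}V(i,md-1)$, leaving $V(0,md+l+1)\oplus\bigoplus_{i=1}^{l+1}V(i,md-1)$ as desired.

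The bulk of the effort is this index bookkeeping; a dimension check, $(l+1)(md+1)$ on both sides, guards against arithmetic slips, and Lemma 3.10 can be invoked if one prefers to certify directly that each listed summand genuinely embeds rather than to rely on the Green-ring cancellation. Finally, the second equality $V(0,md)\otimes V(0,l)=V(0,md)\otimes V(0,l)$ follows from the same induction using the right-hand versions of Lemmas 3.8 and 3.9.
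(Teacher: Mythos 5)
Your proposal is correct, but it takes a genuinely different route from the paper. You run an induction on $l$ entirely inside the Green ring, using only the shift lemma (Lemma 3.8) and the $V(0,1)\otimes V(0,md+l)$ decomposition (Lemma 3.9), together with the subtraction convention the paper justifies by Krull--Schmidt; the point you correctly isolate as crucial is the resonance $V(0,1)\otimes V(i,md-1)=V(i,md-1)\oplus V(i+1,md-1)$ coming from the boundary branch ($l=d-1$) of Lemma 3.9, and your bookkeeping checks out: the two copies of $V(1,md+l-1)$ cancel, the ranges $\bigoplus_{i=1}^{l}$ and $\bigoplus_{i=2}^{l+1}$ minus $\bigoplus_{i=2}^{l}$ telescope to $\bigoplus_{i=1}^{l+1}$, and the dimension count $(l+1)(md+1)$ agrees. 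The paper instead argues directly at the module level: it lets the graded dual path algebra act via $(p_e^f)^*$ as in (3.6)--(3.7), uses the vanishing criterion (3.1) for $\binom{md}{x}_q$ to show that $\{(p_0^k)^*(v_0^0\otimes v_0^0)\}_{0\le k\le md+l}$ spans a copy of $V(0,md+l)$ and that $\{(p_{i+j}^k)^*(v_i^0\otimes v_j^0)\}_{0\le k\le md-1}$, for $j>0$, $i+j\le l$, spans copies of $V(i+j,md-1)$, then invokes Lemma 3.10 to see these sit in distinct indecomposable summands, and finishes by comparing dimensions. Your approach buys economy and uniformity: it needs no explicit comodule computation and is exactly the Green-ring calculus the paper deploys later (Lemmas 3.15--3.17, Theorem 3.18). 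The paper's construction buys two things your method cannot: explicit generators of the embedded summands, and, more importantly, a technique that survives at $l=d$, where your induction necessarily stops --- since $V(0,1)\otimes V(0,d-1)=V(0,d-1)\oplus V(1,d-1)$, the object $V(0,d)$ is never reachable from Lemma 3.9, which is precisely why $[V(0,d)]$ must be adjoined as a third generator of $\R(\C_q)$ and why the companion Lemma 3.12 requires the paper's direct method. (Minor slip: your last sentence writes the second equality as $V(0,md)\otimes V(0,l)=V(0,md)\otimes V(0,l)$; the intended mirror statement is fine because Lemmas 3.8 and 3.9 are stated two-sidedly.)
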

\begin{proof}
As before we only prove the first identity. By (3.6) we have
\begin{equation*}
(p_{i+j}^{md})^*(v_i^0\otimes v_j^0)=\sum_{x+y=md}\sum_{x=0}^{l-i}\sum_{y=0}^{md-j}q^{i(md-x)}\binom{md}{x}_qv_{i+x}^0\otimes v_{j+y}^0.
\end{equation*}
According to (3.1), $\binom{md}{x}_q\neq 0$ iff $x= kd$ for some $k$. Note that $x\leq l-i\leq l \leq d-1$, this implies $\binom{md}{x}_q\neq 0$ iff  $x=0.$  So we have $(p_{i+j}^{md})^*(v_i^0\otimes v_j^0)\neq 0$ iff $j=0.$ As
\begin{equation*}
(p_0^{md})^*(v_0^0\otimes v_0^0)=v_0^0\otimes v_{md}^0
\quad \text{and} \quad
(p_{md}^l)^*(v_0^0\otimes v_{md}^0)=v_l^0\otimes v_{md}^0,
\end{equation*}
it follows that $\{(p_0^k)^*(v_0^0\otimes v_0^0)\}_{0\leq k\leq md+l}$ spans a subcomodule isomorphic to $V(0,md+l)$.

If we assume $i \ge 0, \ j>0$ and $i+j \leq l,$ then $(p_{i+j}^{md})^*(v_i^0\otimes v_j^0)=0,$  while by (3.7)
\begin{equation*}
\begin{split}
(p_{i+j}^{md-1})^*(v_i^0\otimes v_j^0)
&=(p_{i+j+(m-1)d}^{d-1})^*\big[(p_{i+j}^{(m-1)d})^*(v_i^0\otimes v_j^0)\big]\\
 &=(p_{i+j+(m-1)d}^{d-1})^*(v_i^0\otimes v_{j+(m-1)d}^0)\\
 &=\sum_{x+y=d-1}\sum_{x=0}^{l-i}\sum_{y=0}^{d-j}q^{i(d-1-x)}\binom{d-1}{x}_qv_{i+x}^0\otimes v_{j+(m-1)d+y}^0\\
 &\neq 0.
\end{split}
\end{equation*}
 It follows that $\{(p_{i+j}^k)^*(v_i^0\otimes v_j^0)|0\leq k\leq md-1\}$ spans a subcomodule isomorphic to $V(i+j,md-1).$  Hence we have proved that $V(0,md+l)$ and $V(k,md-1)$ for $0<k<l$ are subcomodules of $V(0,l)\otimes V(0,md).$  Assume $V(0,l)\otimes V(0,md)=\bigoplus_{s,t} V(s,t),$ then by the previous lemma $V(0,md+l)$ or $V(k,md-1)$ must be included in some $V(s,t)$ with $md+l=s+t$ or $k+md-1=s+t$ in $\Z_n$. It is clear that $V(0,md+l)$ and $V(k,md-1)$ for $0<k<l$ are in different $V(s,t)$ and therefore $$V(0,l+md)\oplus \bigoplus_{i=1}^lV(i,md-1) \subseteq V(0,l)\otimes V(0,md).$$ Now the identity is obtained by comparing the dimensions.
\end{proof}

\begin{lemma}For all $m \geq 1$ we have
\begin{equation*}
\begin{split}
&\quad V(0,d)\otimes V(0,md) \\
&=V(0,(m+1)d)\oplus V(1,(m+1)d-2)\oplus \bigoplus_{i=2}^{d-1}V(i,md-1)\oplus V(d,(m-1)d)\\
&=V(0,md)\otimes V(0,d).
\end{split}
\end{equation*}
\end{lemma}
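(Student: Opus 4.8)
The plan is to follow the template of the proof of Lemma 3.11: work with the rational $\gr(\k\mathcal{Z}(q))^*$-module structure on $M:=V(0,d)\otimes V(0,md)$ furnished by (3.6)--(3.7), and prove only the first identity, the second being symmetric. Specializing (3.6) to $f=1$ shows that the degree-raising operator acts by $(p_{s+t}^1)^*(v^0_s\otimes v^0_t)=q^s\,v^0_s\otimes v^0_{t+1}+v^0_{s+1}\otimes v^0_t$, with out-of-range terms read as $0$; thus $M$ is graded by total degree $k=s+t$ and is a direct sum of string comodules $V(i,l)$, each having one-dimensional graded pieces. I would begin by showing that $v^0_0\otimes v^0_0$ generates a copy of $V(0,(m+1)d)$. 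Since $(p_0^k)^*(v^0_0\otimes v^0_0)=\sum_{x+y=k}\binom{k}{x}_q v^0_x\otimes v^0_y$, it suffices to produce, for each $0\le k\le (m+1)d$, one non-vanishing coefficient: taking $x=\min(k,d)$ and applying the criterion (3.1) (for $k>d$ one checks $\big[\frac{k}{d}\big]-1-\big[\frac{k-d}{d}\big]=0$, so $\binom{k}{d}_q\neq 0$) gives a string of full length $(m+1)d$ with top $v^0_d\otimes v^0_{md}$.

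For the remaining summands I would analyze the head and socle of $M$. Using the raising formula one shows that the kernel of the raising operator (the string tops) is $(d+1)$-dimensional and one-dimensional in each of the degrees $md,md+1,\dots,(m+1)d$, while the head is one-dimensional in each of the degrees $0,1,\dots,d$. Hence $M$ is a sum of exactly $d+1$ strings with bottoms in degrees $0,\dots,d$ and tops in degrees $md,\dots,(m+1)d$. The lengths are then determined by exhibiting explicit generators and tracing where their strings terminate, the termination being controlled by the vanishing pattern of the $\binom{j}{x}_q$ via (3.1). This should yield the middle strings $V(i,md-1)$ for $2\le i\le d-1$, the shortened summand $V(1,(m+1)d-2)$ generated by an alternating combination $v^0_0\otimes v^0_1-c\,v^0_1\otimes v^0_0$ in degree $1$, and the summand $V(d,(m-1)d)$ whose top is the degree-$md$ socle vector.

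Having produced subcomodules isomorphic to each claimed summand, I would invoke Lemma 3.10 to place them in distinct indecomposable summands of the genuine decomposition: their tops occupy the $d+1$ consecutive residues $md+r$ ($0\le r\le d$) in $\Z_n$, which are distinct unless $d=n$. In the borderline case $d=n$ the tops (and bottoms) of $V(0,(m+1)d)$ and $V(d,(m-1)d)$ coincide in $\Z_n$; here I would instead use that $M$ is genuinely $\Z_{\ge 0}$-graded by path length, so these two strings, occupying the distinct degree ranges $[0,(m+1)d]$ and $[d,md]$, are automatically different graded summands. A dimension count $(d+1)(md+1)=\big[(m+1)d+1\big]+\big[(m+1)d-1\big]+(d-2)md+\big[(m-1)d+1\big]$ then forces the asserted equality.

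The hard part is determining the correct lengths, i.e. the pairing of the $d+1$ bottom degrees $\{0,\dots,d\}$ with the $d+1$ top degrees $\{md,\dots,(m+1)d\}$: the dimension and rank data alone fix these two multisets but not the pairing, so they do not by themselves rule out other gluings. Explaining why the degree-$1$ generator stops at degree $(m+1)d-1$ (length $(m+1)d-2$) and the degree-$d$ generator stops at degree $md$ (length $(m-1)d$), rather than at the naive lengths, is exactly where one must wrestle with the Gaussian-binomial vanishing (3.1). As an aid to the bookkeeping and a consistency check, I would tensor the short exact sequence $0\To V(d,0)\To V(0,d)\To V(0,d-1)\To 0$ with $V(0,md)$; by Lemma 3.8 and Lemma 3.11 this realizes $V(d,md)$ as a subcomodule of $M$ with quotient $V(0,(m+1)d-1)\oplus\bigoplus_{i=1}^{d-1}V(i,md-1)$, which anchors the socle in degree $(m+1)d$ and pins down the composition factors of $M$.
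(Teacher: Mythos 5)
Your strategy is the same as the paper's: realize $M=V(0,d)\otimes V(0,md)$ as a rational $\gr(\k\mathcal{Z}(q))^*$-module via (3.6)--(3.7), exhibit explicit subcomodules, separate them into distinct indecomposable summands with Lemma 3.10, and conclude by a dimension count. Within that strategy, however, there is a genuine gap, and you diagnose it yourself: the head/socle analysis only shows that $M$ is a sum of $d+1$ strings with bottoms in degrees $0,\dots,d$ and tops in degrees $md,\dots,(m+1)d$, and this data does not determine the pairing of bottoms with tops (for instance $V(0,(m+1)d)\oplus\bigoplus_{i=1}^{d}V(i,md-1)$ matches all of your multiset, rank and dimension constraints, yet is not the right answer). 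Fixing that pairing is the entire content of the lemma, and your proposal defers it (``this should yield\dots'', ``the hard part is\dots''). Concretely, two computations are missing. For $V(1,(m+1)d-2)$ you guess a generator $v_0^0\otimes v_1^0-c\,v_1^0\otimes v_0^0$ but never determine $c$ or verify the string length; note that $c$ is not a constant but depends on $m$ --- the paper takes $v_1^0\otimes v_0^0-\frac{1}{m}v_0^0\otimes v_1^0$ and checks in two steps (apply $(p_1^{md})^*$ to get $v_1^0\otimes v_{md}^0-v_d^0\otimes v_{(m-1)d+1}^0$, then show $(p_{md+1}^{d-1})^*$ kills it while $(p_{md+1}^{d-2})^*$ does not) that the string stops exactly at degree $(m+1)d-1$. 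For $V(d,(m-1)d)$ you offer no generator at all; the paper's is the non-obvious element $\sum_{i=0}^{d}\alpha_i v_i^0\otimes v_{d-i}^0-(m-1)v_d^0\otimes v_0^0$ with $\alpha_i=(-1)^iq^{-i(i+1)/2}$, chosen precisely so that its image in degree $md$ is annihilated by the raising operator $(p_{md}^1)^*$. Without these two elements the argument cannot distinguish the asserted decomposition from the competing pairings.

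Your auxiliary device --- tensoring $0\To V(d,0)\To V(0,d)\To V(0,d-1)\To 0$ with $V(0,md)$ and invoking Lemmas 3.8 and 3.11 --- is a legitimate consistency check on composition factors, but since the resulting exact sequence need not split it cannot pin down the direct-sum decomposition, so it does not close the gap either. (Your remark on the borderline case $d=n$, using the path-length grading to separate summands whose vertex data coincide in $\Z_n$, is a sensible refinement of the paper's appeal to Lemma 3.10, but it addresses a secondary point; the undetermined string lengths are the real issue.)
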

\begin{proof}
We only prove the first identity. First we verify that each indecomposable summand of the second formula is included in the first term.

It is obvious that $V(0,(m+1)d)\subset V(0,d)\otimes V(0,md)$ since $$(p_0^{(m+1)d})^*(v_0^0\otimes v_0^0)=mv_d^0\otimes v_{md}^0,\ \ \ \ (p_0^{(m+1)d+1})^*(v_0^0\otimes v_0^0)=0.$$
Further, by (3.7) we have
\begin{equation*}
\begin{split}
(p_{1}^{(m+1)d-1})^*(v_1^0\otimes v_0^0-\frac{1}{m}v_0^0\otimes v_1^0)
 &=(p_{md+1}^{d-1})^*\big[(p_{1}^{md})^*(v_1^0\otimes v_0^0-\frac{1}{m}v_0^0\otimes v_1^0)\big]\\
 &=(p_{md+1}^{d-1})^*(v_1^0\otimes v_{md}^0-v_d^0\otimes v_{(m-1)d+1}^0)\\
 &=v_d^0\otimes v_{md}^0-v_d^0\otimes v_{md}^0\\
 &=0
\end{split}
\end{equation*}
and
\begin{equation*}
\begin{split}
(p_{1}^{(m+1)d-2})^*(v_1^0\otimes v_0^0-\frac{1}{m}v_0^0\otimes v_1^0)
 &=(p_{md+1}^{d-2})^*\big[(p_{1}^{md})^*(v_1^0\otimes v_0^0-\frac{1}{m}v_0^0\otimes v_1^0)\big]\\
 &=(p_{md+1}^{d-2})^*(v_1^0\otimes v_{md}^0-v_d^0\otimes v_{(m-1)d+1}^0)\\
 &=v_{d-1}^0\otimes v_{md}^0-v_d^0\otimes v_{md-1}^0\\
 &\neq 0,
\end{split}
\end{equation*}
hence $V(1,(m+1)d-2) \subset V(0,d)\otimes V(0,md).$

Now for all $i>0, \ j>0$ with $i+j<d,$ one may verify that $\{(p_{i+j}^k)^*(v_i^0\otimes v_j^0)|0\leq k\leq md-1\}$ spans a subcomodule isomorphic to $V(i+j,md-1)$ as the preceding lemma.

Next we prove that $V(d,(m-1)d)$ is a subcomodule of $V(0,d) \otimes V(0,md).$  Set $\alpha_i=(-1)^iq^{-\frac{(1+i)i}{2}}$ for $0\leq i\leq d,$ then one has $(p_{md}^1)^*(\sum_{i=0}^d\alpha_i v_i^0\otimes v_{md-i}^0)=0$ by direct computation. Since we have
\[(p_d^{(m-1)d})^*(v_0^0\otimes v_d^0)=(m-1)v_d^0\otimes v_{(m-1)d}^0+v_0^0\otimes v_{(m)d}^0, \quad
(p_d^{(m-1)d})^*(v_d^0\otimes v_0^0)=v_d^0\otimes v_{(m-1)d}^0 ,\]
and \[ (p_d^{(m-1)d})^*(v_i^0\otimes v_{d-i}^0)=v_i^0\otimes v_{md-i}^0, \quad i\neq 0,d. \]
It follows that
\begin{equation*}
(p_{d}^{(m-1)d})^*\bigg(\sum_{i=0}^d\alpha_i v_i^0\otimes v_{d-i}^0-(m-1)v_d^0\otimes v_0^0\bigg)=\sum_{i=0}^d\alpha_i v_i^0\otimes v_{md-i}^0\neq 0
\end{equation*}
and
\begin{equation*}
(p_{d}^{(m-1)d+1})^*\bigg(\sum_{i=0}^d\alpha_i v_i^0\otimes v_{d-i}^0-(m-1)v_d^0\otimes v_0^0\bigg)=p_{md}^1\bigg(\sum_{i=0}^d\alpha_i v_i^0\otimes v_{md-i}^0\bigg)
=0.
\end{equation*}
This implies that $\{(p_d^k)^*(\sum_{i=0}^d\alpha_i v_i^0\otimes v_{d-i}^0-(m-1)v_d^0\otimes v_0^0)|0\leq k\leq (m-1)d\}$ spans a subcomodule isomorphic to $V(d,(m-1)d)$ and hence $V(d,(m-1)d)$ is a subcomodule of $V(0,d)\otimes V(0,md).$

Again, using  Lemma 3.10, one can show that $$V(0,(m+1)d)\oplus V(1,(m+1)d-2)\oplus \bigoplus_{i=2}^{d-1}V(i,md-1)\oplus V(d,(m-1)d) \subseteq V(0,d)\otimes V(0,md)$$ and the identity is obtained by comparing the dimensions.
\end{proof}

\begin{corollary}
The Green ring $\R(\C_q)$ is commutative and is generated by $[V(1,0)], \ [V(0,1)]$ and $[V(0,d)].$
\end{corollary}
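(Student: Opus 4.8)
The plan is to reduce both assertions to the three proposed generators and to a single induction on the length parameter. First I would record the basic simplification coming from Lemma 3.8: since $V(i,l)=V(1,0)^{\otimes i}\otimes V(0,l)$, in the Green ring we have $[V(i,l)]=[V(1,0)]^{i}[V(0,l)]$. Because $\{[V(i,l)]\mid i\in\Z_n,\ l\ge 0\}$ is a $\Z$-basis of $\R(\C_q)$, it suffices to prove that every $[V(0,l)]$ lies in the subring $S$ generated by $[V(1,0)]$, $[V(0,1)]$ and $[V(0,d)]$; this immediately forces $S=\R(\C_q)$.

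Next I would establish $[V(0,l)]\in S$ for all $l\ge 0$ by strong induction on $l$. The cases $l=0,1,d$ are immediate, the first because $V(0,0)=\1$ and the other two because they are generators. For the inductive step with $l\ge 2$ and $d\nmid l$, write $l=md+r$ with $1\le r\le d-1$; the first case of Lemma 3.9 reads $V(0,1)\otimes V(0,l-1)=V(0,l)\oplus V(1,l-2)$, whence
\[
[V(0,l)]=[V(0,1)]\,[V(0,l-1)]-[V(1,0)]\,[V(0,l-2)],
\]
and both $V(0,\cdot)$-factors on the right have index $<l$, so lie in $S$ by induction. For $l=(m+1)d$ with $m\ge 1$, I would instead use Lemma 3.12 to solve for the top summand,
\[
[V(0,(m+1)d)]=[V(0,d)]\,[V(0,md)]-[V(1,(m+1)d-2)]-\sum_{i=2}^{d-1}[V(i,md-1)]-[V(d,(m-1)d)].
\]
Applying Lemma 3.8 once more, every term on the right is of the form $[V(1,0)]^{i}[V(0,j)]$ with $j\in\{md,\ (m+1)d-2,\ md-1,\ (m-1)d\}$, all strictly smaller than $(m+1)d$; hence the right-hand side lies in $S$, and so does $[V(0,(m+1)d)]$. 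This closes the induction and proves that the three classes generate $\R(\C_q)$.

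For commutativity it then suffices to check that the three generators commute pairwise, since a ring generated by pairwise commuting elements is commutative. The class $[V(1,0)]$ commutes with the other two directly from the symmetric equalities $V(1,0)\otimes V(i,l)=V(i,l)\otimes V(1,0)$ of Lemma 3.8, while $[V(0,1)]$ and $[V(0,d)]$ commute by the symmetric decomposition in Lemma 3.11 (taken with $l=1$, $m=1$). Thus $\R(\C_q)$ is commutative.

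The delicate point, and the reason the third generator is genuinely needed, is the behaviour at multiples of $d$. Tensoring by $V(0,1)$ can never push the length past a multiple of $d$: this is exactly the degenerate second case of Lemma 3.9, in which $V(0,1)\otimes V(0,(m+1)d-1)$ fails to produce $V(0,(m+1)d)$. Crossing these thresholds is precisely what $V(0,d)$ and Lemma 3.12 accomplish, and the main care in the argument is to confirm that in the decomposition of $V(0,d)\otimes V(0,md)$ every error summand really has $V(0,\cdot)$-index strictly below $(m+1)d$, so that the induction is well-founded.
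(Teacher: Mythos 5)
Your proof is correct and takes essentially the same route as the paper: the paper disposes of this corollary as a ``direct consequence of Lemmas 3.8--3.9 and Lemmas 3.11--3.12,'' and your argument is precisely the natural spelling-out of that claim --- translation by $[V(1,0)]$ via Lemma 3.8, a strong induction on $l$ using Lemma 3.9 off multiples of $d$ and Lemma 3.12 to cross them, and pairwise commutation of the generators via the two-sided equalities in Lemmas 3.8 and 3.11.
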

\begin{proof}
Direct consequence of Lemmas 3.8-3.9 and Lemmas 3.11-3.12 .
\end{proof}

\begin{remark}
Thanks to Lemmas 3.8-3.9 and Lemmas 3.11-3.12, we have
\begin{equation*}
\begin{split}
V(i,l)\otimes V(j,m)&=V(j,m )\otimes V(i,l)\\
&=V(i+j,0)\otimes V(0,l)\otimes V(0,m)\\
&=V(i+j,0)\otimes V(0,m)\otimes V(0,l).
\end{split}
\end{equation*}
Hence it is enough to compute the decomposition of $V(0,l)\otimes V(0,m)$ with $l\leq m.$
\end{remark}

\begin{lemma}
For $0\leq l\leq m \leq d-1,$ set $\gamma= l+m-d+1$ and we have
\begin{equation*}
V(0,l)\otimes V(0,m)=
\begin{cases}\bigoplus_{i=0}^lV(i,l+m-2i),& l+m\leq d-1;\\ \bigoplus_{i=0}^{\gamma}V(i,d-1)\oplus \bigoplus_{j=\gamma+1}^lV(j,l+m-2j), & l+m>d-1.
\end{cases}
\end{equation*}
\end{lemma}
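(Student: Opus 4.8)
My plan is to argue by induction on $l$, keeping $m$ fixed, in exactly the spirit of the proof of Theorem 3.4. The engine of the induction is the peeling identity $V(0,l)=V(0,1)\otimes V(0,l-1)-V(1,l-2)$, which is legitimate for $2\le l\le d-1$: since $l-1\le d-2$ we are in the generic (first) case of Lemma 3.9, and the subtraction is read through the Krull--Schmidt convention introduced after Lemma 3.3. The base cases $l=0$ and $l=1$ are trivial and Lemma 3.9 respectively. I would first dispose of the regime $l+m\le d-1$: here every length that can occur stays strictly below $d-1$, so Lemma 3.9 is always used in its generic form, $V(0,l-1)\otimes V(0,m)$ and $V(0,l-2)\otimes V(0,m)$ both fall under the first case of the hypothesis, and the computation is verbatim that of Theorem 3.4, giving $\bigoplus_{i=0}^{l}V(i,l+m-2i)$.

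For the inductive step in the regime $l+m\ge d$ I would tensor the peeling identity with $V(0,m)$ and substitute the inductive hypotheses for $V(0,l-1)\otimes V(0,m)$ and for $V(1,l-2)\otimes V(0,m)=V(1,0)\otimes\big(V(0,l-2)\otimes V(0,m)\big)$, the latter rewritten via Lemma 3.8 and commutativity (Remark 3.13). Applying $V(0,1)\otimes-$ and $V(1,0)\otimes-$ termwise then reduces everything to Lemmas 3.8 and 3.9. The decisive feature is the boundary value of the second index: for $k\le d-2$ one has the generic $V(0,1)\otimes V(i,k)=V(i,k+1)\oplus V(i+1,k-1)$, whereas at the cap $V(0,1)\otimes V(i,d-1)=V(i,d-1)\oplus V(i+1,d-1)$. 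Thus the length-$(d-1)$ summands coming from the hypothesis reproduce length-$(d-1)$ objects rather than manufacturing a length-$d$ object, which is exactly what forces the two-case shape of the conclusion.

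The remaining work is bookkeeping: after expanding, I would collect the length-$(d-1)$ summands separately from the rest. The subtracted term $V(1,0)\otimes(\ldots)$ raises every first index by one while preserving lengths, and I expect it to cancel precisely the overcount produced by $V(0,1)\otimes-$: among the length-$(d-1)$ summands it removes the doubled interior copies and leaves $\bigoplus_{i=0}^{\gamma}V(i,d-1)$, while among the remaining summands it annihilates one whole block and leaves $\bigoplus_{j=\gamma+1}^{l}V(j,l+m-2j)$. Two small subcases must be separated according to where the factors sit: when $l+m=d$ (so $\gamma=1$) the factor $V(0,l-1)\otimes V(0,m)$ is still governed by the first case, whereas for $l+m\ge d+1$ (so $\gamma\ge 2$) it is already in the second case; in either situation the assembled answer is identical. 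Throughout, the dimension count $\dim=(l+1)(m+1)$ on both sides serves as a running consistency check.

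The main obstacle I anticipate is not any single step but the reliable tracking of the capped summands through the recursion: one must verify that the length-$(d-1)$ pieces generated at the boundary, together with the index shift from $V(1,0)\otimes-$, telescope exactly into $\bigoplus_{i=0}^{\gamma}V(i,d-1)$ with no leftover and no double counting. As an alternative that sidesteps the telescoping, one could imitate Lemmas 3.11 and 3.12 and argue directly: using the dual action $(p_e^f)^{*}$ of (3.6)--(3.7) together with Lemma 3.10, one exhibits explicit generating vectors for each claimed summand and concludes by comparing dimensions. I would nonetheless keep the inductive route as the main argument, since it is shorter and reuses the machinery already developed for Theorem 3.4.
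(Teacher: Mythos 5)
Your proposal follows essentially the same route as the paper's proof: the regime $l+m\le d-1$ is handled as in Theorem 3.4, and for $l+m\ge d$ one inducts on $l$ via the peeling identity $V(0,l)=V(0,1)\otimes V(0,l-1)-V(1,l-2)$ tensored with $V(0,m)$, with base case $l=1$, $m=d-1$ given by Lemma 3.9, the index shifts handled by Lemma 3.8 and commutativity, and the cap behavior $V(0,1)\otimes V(i,d-1)=V(i,d-1)\oplus V(i+1,d-1)$ producing exactly the two-case shape of the conclusion. The only divergence is that the paper isolates a third transition case $l+m=d+1$, where the subtracted factor $V(0,l-2)\otimes V(0,m)$ has sum $d-1$ and is therefore governed by the first-case formula while the main factor already falls under the second case; your coarser split ($l+m=d$ versus $l+m\ge d+1$) silently absorbs this, which is harmless because the two case formulas coincide when $\gamma=0$ (at sum $=d-1$ the first case reads $V(0,d-1)\oplus\bigoplus_{j=1}^{l}V(j,l+m-2j)$), but it is worth stating explicitly when writing the bookkeeping out.
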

\begin{proof}
For the case with $l+m\leq d-1,$ the proof is similar to that of Theorem 3.4 and so omitted. Now let $l+m\geq d$ and we will prove the lemma by induction on $l$. If $l=1,$ then by assumption $m=d-1,$ and in this case the claim has been proved in Lemma 3.8. If $l>1$ and $l+m=d,$ then we have
\begin{equation*}
\begin{split}
&\quad V(0,l)\otimes V(0,m)\\
&=\big[V(0,1)\otimes V(0,l-1)-V(1,l-2)\big]\otimes V(0,m)\\
&=\bigoplus_{i=0}^{l-1}\bigg[V(i,l+m-2i)\oplus V(i+1,l-2+m-2i)\bigg]-\bigoplus_{j=0}^{l-2}V(i+1,l-2+m-2j)\\
&=V(0,d-1)\oplus V(1,d-1) \oplus \bigoplus_{j=2}^lV(j,l+m-2j),
\end{split}
\end{equation*}
where in the second equality the case with $l+m \le d-1$ is applied. Similarly one can prove the case with $l>1$ and $l+m=d+1.$ Now let $l+m>d+1$ and by the induction hypothesis we have
\begin{equation*}
\begin{split}
&\quad V(0,l)\otimes V(0,m)\\
&=\big[V(0,1)\otimes V(0,l-1)-V(1,l-2)\big]\otimes V(0,m)\\
&=V(0,1)\otimes\bigg[\bigoplus_{i=0}^{\gamma-1}V(i,d-1)\oplus \bigoplus_{j=\gamma}^lV(j,l-1+m-2j)\bigg] \\ & \ \ \ -\bigg[\bigoplus_{i=0}^{\gamma-2}V(i+1,d-1)\oplus \bigoplus_{j=\gamma-1}^lV(j+1,l-2+m-2j)\bigg]\\
&=\bigoplus_{i=0}^{\gamma}V(i,d-1)\oplus \bigoplus_{j=\gamma+1}^lV(j,l+m-2j).
\end{split}
\end{equation*}
The proof is finished.
\end{proof}

\begin{lemma}
\begin{equation*}
\begin{split}
&V(0,l)\otimes V(0,md+h)\\
=&\begin{cases}\bigoplus_{i=0}^l V(i,md+h+l-2i),& 0< l\leq h<d-1,\ l+h\leq d-1;\\ \bigoplus_{i=0}^hV(i,md+h+l-2i)\oplus \bigoplus_{j=h+1}^lV(j,md-1),&  0< h< l<d-1,\ l+h\leq d-1;\\ \bigoplus_{i=0}^{\gamma}V(i,(m+1)d-1)\oplus \bigoplus_{j=\gamma+1}^lV(j,md+h+l-2j),& 0< l\leq h\leq d-1, \ l+h\geq d;\\ \bigoplus_{i=0}^{\gamma}V(i,(m+1)d-1)\oplus \bigoplus_{j=\gamma+1}^hV(j,md+h+l-2j) & \\ \oplus \bigoplus_{k=h+1}^lV(k,md-1),& 0< h< l\leq d-1, \ l+h\geq d.
\end{cases}
\end{split}
\end{equation*}
\end{lemma}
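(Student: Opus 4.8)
The plan is to argue by induction on $l$, the length of the shorter tensor factor, exactly as in Theorem 3.4 and Lemma 3.14, taking Lemma 3.8 as the base case $l=1$; one checks directly that its two cases reproduce the $l=1$ specializations of Cases (1) and (3) of the statement, with $\gamma$ read as $l+h-d+1$. Throughout I would freely invoke Remark 3.13 and the shift isomorphism $V(1,0)\otimes V(i,t)=V(i+1,t)$ of Lemma 3.8, so that every object $V(i,t)$ equals $V(i,0)\otimes V(0,t)$ and tensoring a summand by $V(0,1)$ reduces to computing $V(0,1)\otimes V(0,t)$ and then shifting the first index by $i$.

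First I would record the single-step rule that drives the induction. Since $1\le l\le d-1$ forces $l-1\le d-2$, the generic case of Lemma 3.8 (with $m=0$) gives, in $\R(\C_q)$, the identity $V(0,l)=V(0,1)\otimes V(0,l-1)-V(1,l-2)$. Tensoring with $V(0,md+h)$ then yields
\[ V(0,l)\otimes V(0,md+h)=V(0,1)\otimes\big[V(0,l-1)\otimes V(0,md+h)\big]-V(1,0)\otimes\big[V(0,l-2)\otimes V(0,md+h)\big], \]
in which both bracketed products have a strictly shorter first tensor factor and so are known by the induction hypothesis (and by Lemma 3.11 or Lemma 3.14 at the low end, $l-2\in\{-\,,0\}$, where $V(0,0)=\1$).

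The core of the argument is to expand $V(0,1)\otimes V(i,t)$ for each summand produced by the induction hypothesis. Here Lemma 3.8 splits into two regimes according to the residue of $t$ modulo $d$: if $t\not\equiv d-1$ one gets the generic $V(i,t+1)\oplus V(i+1,t-1)$, whereas if $t\equiv d-1\pmod d$ one gets the truncated $V(i,t)\oplus V(i+1,t)$. The summands of the inductive input carrying length $\equiv d-1\pmod d$ are precisely the blocks $V(\cdot,(m+1)d-1)$ and $V(\cdot,md-1)$, together with a single boundary summand $V(i,md+h+l-1-2i)$ whose length satisfies $h+l-2i\equiv 0\pmod d$. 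I would therefore organize the bookkeeping by whether $l+h\le d-1$ or $l+h\ge d$ and by whether $l\le h$ or $h<l$; these four regimes are exactly the four cases of the statement, and in each the generic pieces telescope against the subtracted term $V(1,0)\otimes[V(0,l-2)\otimes V(0,md+h)]$ just as in the proof of Lemma 3.14, leaving the claimed direct sum.

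I expect the main obstacle to be purely combinatorial: keeping track of which intermediate summands hit the truncation condition $t\equiv d-1\pmod d$, and verifying that the telescoping between the $V(0,1)$-expansion and the subtracted $V(1,l-2)$ term cancels correctly across the two case boundaries $l+h=d-1$ and $l+h=d$, where the top summand switches from the generic form to the truncated form. As a safeguard against index and sign errors I would close each case with a dimension count, confirming that the proposed right-hand side has total dimension $(l+1)(md+h+1)$; combined with the fact that each listed summand has been exhibited as an actual subcomodule, this pins down the decomposition via the Krull--Schmidt theorem. An entirely parallel route, which I would keep in reserve for any case where the inductive cancellation becomes opaque, is to imitate Lemmas 3.11--3.12 directly, producing explicit generating vectors $(p_{i+j}^{k})^{*}(v_i^0\otimes v_j^0)$ for each summand through formulae (3.6)--(3.7) and again comparing dimensions.
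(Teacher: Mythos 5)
Your proposal is correct and is essentially the paper's own proof: both argue by induction on $l$ via the Green-ring identity $V(0,l)=V(0,1)\otimes V(0,l-1)-V(1,l-2)$, expand the inductively known products $V(0,l-1)\otimes V(0,md+h)$ and $V(0,l-2)\otimes V(0,md+h)$ summand-by-summand using the generic/truncated regimes of the $V(0,1)\otimes{-}$ rule together with the shift lemma, and let the generic pieces telescope against the subtracted term, exactly as the paper carries out in detail for the case $0<h<l<d-1$, $l+h\le d-1$. Only your lemma numbers are shifted: the single-step rule and $l=1$ base case are Lemma 3.9 (not 3.8), the reduction remark is Remark 3.14, and the small-length decomposition you invoke is Lemma 3.15.
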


\begin{proof}
The case with $0< l\leq h<d-1, \ l+h\leq d-1$ can be proved in the same manner as Theorem 3.4. The proofs for the remaining three cases are similar, so we only provide the proof for the case with $0< h< l<d-1, \ l+h\leq d-1.$

If $l-h=1$, we have
\begin{equation*}
\begin{split}
&\quad V(0,l)\otimes V(0,md+h)\\
=&\big[V(0,1)\otimes V(0,l-1)-V(1,l-2)\big]\otimes V(0,md+h)\\
=&\bigoplus_{i=0}^{l-1}\bigg[V(i,md+h+l-2i)\oplus V(i+1,md+h+l-2-2i)\bigg]-\bigoplus_{i=0}^{l-2}V(i+1,md+h+l-2-2i)\\
=&\bigoplus_{i=0}^hV(i,md+h+l-2i)\oplus V(l,md-1).
\end{split}
\end{equation*}
Similarly one can prove the formula for the case with $l=h+2.$ Next assume $l-h>2.$ In this situation we have
\begin{equation*}
\begin{split}
&\quad V(0,l)\otimes V(0,md+h)\\
=&\big[V(0,1)\otimes V(0,l-1)-V(1,l-2)\big]\otimes V(0,md+h)\\
=&\bigoplus_{i=0}^h\bigg[V(i,md+h+l-2i)\oplus V(i+1,md+h+l-2-2i)\bigg]\oplus \bigoplus_{j=h+1}^{l-1}\bigg[V(j,md-1)\oplus V(j+1,md-1)\bigg] \\
&\ \  -\bigg[\bigoplus_{i=0}^hV(i+1,md+h+l-2-2i)\oplus \bigoplus_{j=h+1}^{l-2}V(j+1,md-1)\bigg]\\
=&\bigoplus_{i=0}^hV(i,md+h+l-2i)\oplus \bigoplus_{j=h+1}^lV(j,md-1).
\end{split}
\end{equation*}

We are done.
\end{proof}

\begin{lemma}
For all $h>0$ we have
\begin{equation*}
\begin{split}
&V(0,d)\otimes V(0,md+h)\\
=&\begin{cases}V(0,(m+1)d+h)\oplus  \bigoplus_{i=1}^{h}V(i,(m+1)d-1)\oplus V(h+1,,(m+1)d-h-2)& \\
\oplus \bigoplus_{j=h+2}^{d-1}V(j,md-1)\oplus  V(d,(m-1)d+h),& h \leq d-2; \\
V(0,(m+1)d+d-1)\oplus\bigoplus_{i=1}^{d-1}V(i,(m+1)d-1)\oplus V(d,md-1),& h=d-1.
\end{cases}
\end{split}
\end{equation*}
\end{lemma}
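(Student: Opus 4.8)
The plan is to generalize the proof of Lemma 3.12, of which this statement is the natural extension, rather than to attempt an induction that peels a copy of $V(0,1)$ off the first factor. Such an induction is simply not available here: since $\ord(q)=d$, Lemma 3.8 already gives $V(0,1)\otimes V(0,d-1)=V(0,d-1)\oplus V(1,d-1)$, so $V(0,d)$ is a genuinely new generator of $\R(\C_q)$ (Corollary 3.13) and cannot be reached by the Krull--Schmidt subtraction trick used in Lemmas 3.16 and 3.17. I would instead realize every indecomposable summand listed on the right-hand side as an explicit subcomodule of $V(0,d)\otimes V(0,md+h)$, verify that these subcomodules are independent, and then close the argument by a dimension count against $(d+1)(md+h+1)=\dim\bigl(V(0,d)\otimes V(0,md+h)\bigr)$.

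Concretely, I would view $V(0,d)\otimes V(0,md+h)$ as a rational $\gr(\k\mathcal{Z}(q))^*$-module via (3.6)--(3.7), and for each claimed summand $V(s,t)$ write down a generating vector $w_{s,t}$, a suitable combination of the $v^0_a\otimes v^0_b$ with $a+b=s$, and check that $\{(p^k_{s})^*w_{s,t}\}_{0\le k\le t}$ is a nonzero chain spanning a subcomodule isomorphic to $V(s,t)$ while $(p^{t+1}_{s})^*w_{s,t}=0$. The easy generators are $v^0_0\otimes v^0_0$ for the long summand $V(0,(m+1)d+h)$, and the combinations of $v^0_a\otimes v^0_b$ with $a+b=j$ and $a,b>0$ for the strings $V(j,md-1)$, exactly as in Lemma 3.11. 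The two delicate generators are the analogues of those in Lemma 3.12: a two-term vector generalizing $v^0_1\otimes v^0_0-\frac{1}{m}v^0_0\otimes v^0_1$ that produces the shortened summand $V(h+1,(m+1)d-h-2)$, and the alternating vector built from $\alpha_i=(-1)^iq^{-\frac{(1+i)i}{2}}$ (now shifted by $h$) that produces the top summand $V(d,(m-1)d+h)$. In every case the control on the length of the resulting chain comes from (3.1): $\binom{md}{x}_q\neq 0$ precisely when $d\mid x$, which is exactly what forces the strings to terminate at $md-1$ or $(m+1)d-1$ rather than continuing.

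Having produced these subcomodules, I would read off from the explicit bases that they intersect trivially, so that their internal sum is direct and, by Lemma 3.10, each embeds into a distinct indecomposable summand of $V(0,d)\otimes V(0,md+h)$; the dimension count then upgrades the inclusion to an equality and settles the case $h\le d-2$. The case $h=d-1$ is the boundary case, to be handled separately: there the exceptional summand $V(h+1,(m+1)d-h-2)$ degenerates to $V(d,md-1)$ and is absorbed into the top of the list, so the formula collapses to the stated shorter one, and I would reuse the same generators with the coincidence $h+1=d$ taken into account, again comparing dimensions (both sides equal $d(d+1)(m+1)$). The main obstacle I anticipate is pinning down the coefficients of the two exceptional generators and proving that a single further application of $(p^1)^*$ annihilates the top of each resulting chain; this is precisely the alternating-sum and Gaussian-binomial computation that made Lemma 3.12 delicate, now carried out with the additional shift by $h$.
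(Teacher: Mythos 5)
Your direct-construction strategy is workable in principle---it is the method of Lemmas 3.11 and 3.12 pushed one step further---but the premise on which you reject induction is false, and the paper proves this lemma by exactly the kind of Krull--Schmidt induction you declare unavailable. What is unavailable is peeling $V(0,1)$ off the factor $V(0,d)$; the paper instead peels $V(0,h)$ off the \emph{second} factor and inducts on $m$: by Lemma 3.11, $V(0,md+h)=V(0,h)\otimes V(0,md)-\bigoplus_{i=1}^{h}V(i,md-1)$ in the Green ring, so $V(0,d)\otimes V(0,md+h)$ is computed as $V(0,h)\otimes\bigl[V(0,d)\otimes V(0,md)\bigr]$ (decomposed via Lemma 3.12 and then Lemmas 3.11 and 3.16) minus $\bigoplus_{i=1}^{h}V(0,d)\otimes V(i,md-1)$; since $md-1=(m-1)d+(d-1)$, the subtracted terms are the $h=d-1$ instance of the present lemma at $m-1$, i.e.\ the inductive hypothesis. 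This route requires no new explicit comodule computations, which is precisely what it buys over yours; yours, if completed, would be self-contained but must redo and generalize all the delicate calculations of Lemma 3.12.

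Judged on its own terms, your plan also has concrete gaps. First, the family $\bigoplus_{i=1}^{h}V(i,(m+1)d-1)$ appears in neither your ``easy'' nor your ``delicate'' list; without generators for these summands (e.g.\ $v_i^0\otimes v_0^0$ works, since $(p_i^{(m+1)d})^*$ kills it while $(p_i^{(m+1)d-1})^*$ does not) the closing dimension count cannot reach $(d+1)(md+h+1)$. Second, your recipe for the strings $V(j,md-1)$---``combinations of $v_a^0\otimes v_b^0$ with $a,b>0$, exactly as in Lemma 3.11''---fails as stated: here the long factor is $V(0,md+h)$, not $V(0,md)$, and for $a\ge 1$, $1\le b\le h$ the top coefficient $\binom{(m+1)d+h-j}{d-a}_q$ is nonzero (both relevant integer parts equal $m$), so $v_a^0\otimes v_b^0$ generates a chain of full length $(m+1)d+h-j$, not $md-1$; one must take the index in the long factor at least $h+1$, which is possible exactly when $j\ge h+2$. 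Third, the two exceptional generators are only guessed at, and the guess is too weak: annihilating $(p_{h+1}^{(m+1)d-h-1})^*$ imposes $h+1$ linear conditions, one for each surviving component $v_x^0\otimes v_y^0$ with $x+y=(m+1)d$ and $d-h\le x\le d$, so for $h\ge 1$ a two-term vector generalizing $v_1^0\otimes v_0^0-\frac{1}{m}v_0^0\otimes v_1^0$ cannot be expected to lie in the kernel; generically all $h+2$ basis vectors of degree $h+1$ are needed, and similarly for the vertex-$d$ generator replacing the alternating sum. Until these vectors are exhibited and their chain lengths verified, your argument is a program rather than a proof, and that unperformed work is exactly what the paper's induction on $m$ avoids.
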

\begin{proof}
Only the proof for the case with $h<d-2$ is provided as the proofs for other cases are similar and much easier.
 We prove by induction on $m$. When $m=1$ we have
 \begin{eqnarray*}
& &V(0,d)\otimes V(0,d+h) \\
&=& V(0,d)\otimes \bigg[V(0,h)\otimes V(0,d)-\bigoplus_{i=1}^hV(i,d-1)\bigg]\\
&=& V(0,h)\otimes \bigg[V(0,2d)\oplus V(1,2d-2)\oplus \bigoplus_{i=2}^{d-1}V(i,d-1) \oplus V(d,0)\bigg]-V(0,d)\otimes \bigg[\bigoplus_{i=0}^hV(i,d-1)\bigg]\\
&=& V(0,2d+h)\oplus \bigoplus_{i=1}^hV(i,2d-1)\oplus \bigoplus_{i=0}^{h-1}V(1+i,2d-1)\oplus V(h+1,2d-2-h) \\
& &\oplus  \bigoplus_{i=2}^{d-1}\bigoplus_{j=0}^hV(i+j,d-1) \oplus V(d,h)-\bigoplus_{i=1}^h\bigg[V(i,2d-1) \oplus \bigoplus_{j=1}^{d-1}V(i+j,d-1)\bigg]\\
&=&V(0,2d+h)\oplus \bigoplus_{i=1}^{h}V(i,2d-1)\oplus  V(h+1,,2d-h-2) \oplus\bigoplus_{j=h+2}^{d-1}V(j,d-1)\oplus V(d,h).
\end{eqnarray*}
Note that for the first equality Lemma 3.11 is applied, for the second Lemma 3.12 and for the third Lemma 3.11. Similarly one can verify the formula for $m=2.$ In the following computation we assume $m\geq 3.$
\begin{eqnarray*}
&&V(0,d)\otimes V(0,md+h) \\
&=& V(0,d)\otimes \bigg[V(0,h)\otimes V(0,md)-\bigoplus_{i=1}^hV(i,md-1)\bigg]\\
&=& V(0,h)\otimes \bigg[V(0,(m+1)d)\oplus V(1,(m+1)d-2)\oplus\bigoplus_{i=2}^{d-1}V(i,md-1) \oplus V(d,(m-1)d)\bigg] \\
& &- V(0,d)\otimes \bigg[\bigoplus_{i=0}^hV(i,md-1)\bigg]\\
&=& V(0,(m+1)d+h)\oplus \bigoplus_{i=1}^hV(i,(m+1)d-1)\oplus \bigoplus_{i=0}^{h-1}V(1+i,(m+1)d-1) \\
&&\oplus V(h+1,(m+1)d-2-h)\oplus \bigoplus_{i=2}^{d-1}\bigoplus_{j=0}^hV(i+j,md-1)\oplus V(d,(m-1)d+h)\\
& &\oplus \bigoplus_{i=1}^hV(i+d,(m-1)d-1)\\
&&-\bigoplus_{i=1}^h\bigg[ V(i,(m+1)d-1)\oplus \bigoplus_{j=1}^{d-1}V(i+j,md-1) \oplus V(i+d,(m-1)d-1)\bigg] \\
&=&V(0,(m+1)d+h)\oplus  \bigoplus_{i=1}^{h}V(i,(m+1)d-1)\oplus  V(h+1,,(e+1)d-h-2) \\
&& \oplus\bigoplus_{j=h+2}^{d-1}V(j,ed-1)\oplus V(d,(m-1)d+h).
\end{eqnarray*} We note that in the third equality the inductive assumption is applied.

The proof is completed.
\end{proof}

Now we are ready to give the Clebsh-Gordan formulae for the tensor category $\C_q.$ The aim is to decompose $V(0,ed+f)\otimes V(0,md+h)$ into direct sum of indecomposable summands for all $e, \ m$ and $0\leq f, \ h\leq d-1$. According to Remark 3.14, in the following we may assume $ed+f \leq md+h.$ In this situation, $e\leq m,$ and if $f>h$ then $e<m.$ Set $\gamma=f+h-d+1.$
\begin{theorem}
\begin{itemize}
\item[(1)] If $0\leq f\leq h$ and $f+h<d-1,$ then
\begin{equation*}
\begin{split}
&V(0,ed+f)\otimes V(0,md+h)=\bigoplus_{k=0}^{e-1}\bigg[\bigoplus_{i=0}^{f}V(kd +i,(e+m-2k)d+f+h-2i) \\
&\quad \oplus \bigoplus_{j=f+1}^{h}V(kd+j,(e+m-2k)d-1) \oplus   \bigoplus_{r=h+1}^{f+h+1}V(kd+r,(e+m-2k)d+h+f-2r) \\
&\quad  \oplus \bigoplus_{s=f+h+2}^{d-1}V(kd+s,(e+m-1-2k)d-1)\bigg] \oplus \bigoplus_{i=0}^{f}V(ed+i,(m-e)d+h+f-2i);
\end{split}
\end{equation*}
\item[(2)]if $0<f\leq h<d$ and $f+h\geq d,$ then
\begin{equation*}
\begin{split}
&V(0,ed+f)\otimes V(0,md+h)=\bigoplus_{k=0}^{e-1}\bigg[\bigoplus_{i=0}^{\gamma}V(kd+i,(e+m-2k)d+d-1) \\
&\quad \oplus \bigoplus_{j=\gamma+1}^{f}V(kd+j,(e+m-2k)d+f+h-2i) \oplus   \bigoplus_{r=f+1}^{h}V(kd+r,(e+m-2k)d-1)\\  &\quad \oplus \bigoplus_{s=h+1}^{d-1}V(kd+s,(e+m-2k)d+h+f-2s)\bigg] \oplus \bigoplus_{i=0}^{\gamma}V(ed+i,(m-e)d+d-1) \\
&\quad \oplus \bigoplus_{i=\gamma+1}^{f}V(ed+i,(m-e)d+f+h-2i);
\end{split}
\end{equation*}
\item[(3)]if $0\leq h<f<d$ and $f+h < d-1,$ then
\begin{equation*}
\begin{split}
&V(0,ed+f)\otimes V(0,md+h) \\
=& \bigoplus_{k=0}^{e-1}\bigg[\bigoplus_{i=0}^{h}V(kd+i,(e+m-2k)d+h+f-2i)\oplus\bigoplus_{j=h+1}^{f}V(kd+j,(e+m-2k)d-1)\\
&\oplus\bigoplus_{r=f+1}^{h+f+1}V(kd+r,(e+m-2k)d+h+f-2r)\oplus \bigoplus_{s=f+h+2}^{d-1}V(kd+s,(e+m-1-2k)d-1)\bigg] \\
&\oplus \bigoplus_{i=0}^{h}V(ed+i,(m-e)d+h+f-2i)\oplus \bigoplus_{i=h+1}^{f}V(ed+i,(m-e)d-1);
\end{split}
\end{equation*}
\item[(4)]if $0<h<f<d$ and $f+h\geq d,$ then
\begin{equation*}
\begin{split}
&V(0,ed+f)\otimes V(0,md+h)=\bigoplus_{k=0}^{e-1}\bigg[\bigoplus_{i=0}^{\gamma}V(kd+i,(e+m-2k)d+d-1) \\
&\quad \oplus \bigoplus_{j=\gamma+1}^{h}V(kd+j,(e+m-2k)d+f+h-2j) \oplus \bigoplus_{r=h+1}^{f}V(kd+r,(e+m-2k)d-1) \\
&\quad \oplus \bigoplus_{s=f+1}^{d-1}V(kd+s,(e+m-2k)d+h+f-2s)\bigg] \oplus \bigoplus_{i=0}^{\gamma}V(ed+i,(m-e)d+d-1) \\
&\quad \oplus \bigoplus_{i=\gamma+1}^{h}V(ed+i,(m-e)d+f+h-2i) \oplus  \bigoplus_{i=h+1}^{f}V(ed+i,(m-e)d-1).
\end{split}
\end{equation*}
\end{itemize}
\end{theorem}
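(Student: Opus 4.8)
The plan is to prove the theorem by induction on $e$, the number of complete $d$-blocks in the first tensor factor. By Remark 3.14 it suffices to decompose $V(0,ed+f)\otimes V(0,md+h)$, and since $ed+f\leq md+h$ we have $e\leq m$. Throughout I keep $f$ and $h$ fixed, so that which of the four cases is in force is determined once and for all by the relations between $f,h$ and $d-1$; I then induct on $e$ separately inside each case. The base case $e=0$ reads $V(0,f)\otimes V(0,md+h)$ with $0\leq f\leq d-1$, and this is supplied directly by Lemma 3.16 (when $m\geq 1$) and by Lemma 3.15 (when $m=0$), the case $f=0$ being the trivial identity $\1\otimes V(0,md+h)=V(0,md+h)$. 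Matching the four subcases of Lemma 3.16 to the four cases of the theorem provides the base of each induction with no extra work.

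For the inductive step the key device is a recursion raising the block count of the first factor by one. The module $V(0,ed+f)$ occurs as the leading indecomposable summand in the decomposition of $V(0,d)\otimes V(0,(e-1)d+f)$, which is given by Lemma 3.17 (for $f>0$, with $m\mapsto e-1$) or Lemma 3.12 (for $f=0$) when $e\geq 2$, and by Lemma 3.11 when $e=1$. Solving for this leading term and using Lemma 3.8 to write every remaining summand $V(i,\ell)$ in the normalized form $V(i,0)\otimes V(0,\ell)$, one obtains an identity
\[
V(0,ed+f)=V(0,d)\otimes V(0,(e-1)d+f)-\sum_{W}V(i_W,0)\otimes V(0,\ell_W),
\]
in which every correction term $W$ satisfies $\ell_W<ed$, i.e. has strictly fewer than $e$ complete blocks. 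Tensoring with $V(0,md+h)$ gives
\[
V(0,ed+f)\otimes V(0,md+h)=V(0,d)\otimes\big[V(0,(e-1)d+f)\otimes V(0,md+h)\big]-\sum_{W}V(i_W,0)\otimes\big[V(0,\ell_W)\otimes V(0,md+h)\big],
\]
where, by the induction hypothesis, the bracketed product with $(e-1)$ blocks and each bracketed product $V(0,\ell_W)\otimes V(0,md+h)$ are already decomposed into indecomposables.

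It remains to expand the right-hand side. Every summand furnished by the induction hypothesis has the shape $V(kd+a,Nd+b)$, and by Lemma 3.8 together with Remark 3.14 one has $V(0,d)\otimes V(kd+a,Nd+b)=V(kd+a,0)\otimes\big[V(0,d)\otimes V(0,Nd+b)\big]$; hence the operator $V(0,d)\otimes-$ distributes over the induction-hypothesis decomposition by applying Lemma 3.12 (if $b=0$) or Lemma 3.17 (if $b>0$) to each piece and then reinstating the shift $kd+a$ through Lemma 3.8. The subtracted correction terms are expanded directly by the induction hypothesis. After collecting like summands, the contribution of $V(0,d)\otimes-$ supplies the new outermost layer of the sum $\bigoplus_{k=0}^{e-1}(\cdots)$ together with the closing remainder block $\bigoplus_{i}V(ed+i,\cdots)$, while the correction terms cancel exactly the overcounting and reproduce the inner layers $k=0,\dots,e-2$.

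The main obstacle is entirely combinatorial: one must verify that this large alternating sum collapses to precisely the stated formula in each of the four cases. This requires tracking the index shifts $kd+a\pmod n$ through Lemma 3.8, matching the many lengths of the forms $(e+m-2k)d-1$ and $(e+m-2k)d+f+h-2i$ across the telescoping cancellations, and treating the boundary subcases with care — in particular the transitions governed by $\gamma=f+h-d+1$ (the dividing lines $f+h<d-1$, $f+h=d-1$ and $f+h\geq d$), the exceptional summands such as $V(h+1,\cdots)$ and $V(d,\cdots)$ produced by Lemma 3.17, and the dichotomy $f\leq h$ versus $f>h$. Each of the four cases is handled by the same procedure, with the matching case of Lemma 3.16 as base and the matching case of Lemma 3.17 as the distribution rule, but the cancellations must be checked case by case.
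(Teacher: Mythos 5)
Your overall strategy coincides with the paper's: realize the first tensor factor as a tensor product minus correction terms (Krull--Schmidt subtraction), tensor with $V(0,md+h)$, expand everything by the induction hypothesis together with the earlier lemmas, and compare. The only real difference is the recursion device: the paper climbs inside a block one step at a time, writing $V(0,(e-1)d+f)=V(0,1)\otimes V(0,(e-1)d+f-1)-V(1,(e-1)d+f-2)$ via Lemma 3.9 for $1\leq f\leq d-1$, and reserves the $V(0,d)$-recursion (Lemma 3.12) for the block boundary $\alpha=ed$, whereas you climb a whole block at once via Lemma 3.17 for every residue $f$. Your device is legitimate and would also work, at the price of a heavier distribution step (Lemma 3.17 produces five families of summands per application, against two for Lemma 3.9).

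The genuine gap is in the declared shape of your induction. You fix $f,h$ and claim to ``induct on $e$ separately inside each case,'' but the step you describe needs more than that hypothesis provides. The corrections produced by Lemma 3.17 in your identity for $V(0,ed+f)$ are $\bigoplus_{i=1}^{f}V(i,ed-1)$, $V(f+1,ed-f-2)$, $\bigoplus_{j=f+2}^{d-1}V(j,(e-1)d-1)$ and $V(d,(e-2)d+f)$; the lengths occurring here have residues $d-1$ and $d-f-2$ modulo $d$, not $f$. Consequently the products $V(0,\ell_W)\otimes V(0,md+h)$ that you must expand by induction belong, in general, to a \emph{different} one of the four cases than the one being proved: for instance, while proving case (1) (so $1\leq f\leq h$ and $f+h<d-1$) the residue-$(d-1)$ corrections satisfy $d-1>h$ and $(d-1)+h\geq d$, which is case (4). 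So an induction whose hypothesis is only ``case (1) holds for smaller $e$ with the same $f,h$'' does not close; the inductive claim must be the conjunction of all four cases over all residues simultaneously --- equivalently, an induction on the length of the first factor, which is exactly what the paper does (its proof of case (1) explicitly invokes ``the inductive assumption of case (4)''). This is fixable by restating the inductive claim, after which your plan is sound; but note also that the collapse of the alternating sums, which you defer as ``must be checked case by case,'' is precisely the bulk of the paper's proof, so what you have is an outline of the argument rather than the argument itself.
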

\begin{proof}
The rules of the decomposition of $V(0,\alpha) \otimes V(0,\beta)$ are divided into four cases. We prove the theorem by induction on $\alpha.$ The claim has been proved for the situation with $ 1\leq \alpha \leq d,$ thanks to Lemmas 3.11, 3.12, 3.16 and 3.17. Now assume that the claim holds when $1 \leq \alpha \leq (e-1)d,$ and we will prove that it still does when $ (e-1)d+1\leq \alpha \leq ed.$

Because the proofs of the four cases are similar, so we only focus on case (1) and the proofs of other cases are omitted.

If $f=1,$ then
\[ V(0,(e-1)d+1)\otimes V(0,md+h) =[V(0,1)\otimes V(0,(e-1)d)-V(1,(e-1)d-1)]\otimes V(0,md+h). \]
By the inductive assumption of case (1) we have
\begin{equation*}
\begin{split}
&V(0,(e-1)d)\otimes V(0,md+h) \\
=&\bigoplus_{k=0}^{e-2}\bigg[V(kd,(e-1+m-2k)d+h)\oplus \bigoplus_{i=1}^{h}V(kd+i,(e-1+m-2k)d-1)\\
& \oplus  V(kd+h+1,(e-1+m-2k)d-h-2) \oplus\bigoplus_{j=h+2}^{d-1}V(kd+j,(e+m-2-2k)d-1)\bigg] \\
& \oplus  V((e-1)d,(m-e+1)d+h).
\end{split}
\end{equation*}
It follows that
\begin{equation*}
\begin{split}
&V(0,1)\otimes V(0,(e-1)d)\otimes V(0,md+h)\\
=&\bigoplus_{k=0}^{e-2}\bigg\{V(kd,(e-1+m-2k)d+h+1)\oplus V(kd+1,(e-1+m-2k)d+h-1)\\
& \oplus \bigoplus_{i=1}^{h}\bigg[V(kd+i,(e-1+m-2k)d-1)\oplus V(kd+i+1,(e-1+m-2k)d-1)\bigg]\\
& \oplus V(kd+h+1,(e-1+m-2k)d-h-1)\oplus V(kd+h+2,(e-1+m-2k)d-h-3)\\
& \oplus\bigoplus_{j=h+2}^{d-1}\bigg[V(kd+j,(e+m-2-2k)d-1)\oplus V(kd+j+1,(e+m-2-2k)d-1)\bigg] \bigg\} \\
& \oplus  V((e-1)d,(m-e+1)d+h+1)\oplus V((e-1)d+1,(m-e+1)d+h-1).
\end{split}
\end{equation*}
By the inductive assumption of case (4) we have
\begin{equation*}
\begin{split}
&V(1,(e-1)d-1)\otimes V(0,md+h)\\
=&\bigoplus_{k=0}^{e-2}\bigg[\bigoplus_{i=0}^{h}V(kd+i+1,(e-1+m-2k)d-1)\oplus \bigoplus_{j=h+1}^{d-1}V(kd+j+1,(e-2+m-2k)d-1)\bigg].
\end{split}
\end{equation*}
 Subtracting the preceding two identities we obtain (1) with $\alpha=(e-1)d+1.$

Next assume $2\leq f\leq d-1.$ First note that
\begin{equation*}
\begin{split}
&V(0,(e-1)d+f)\otimes V(0,md+h)\\
=&V(0,1)\otimes V(0,(e-1)d+f-1)\otimes V(0,md+h)-V(1,(e-1)d+f-2)\otimes V(0,md+h).
\end{split}
\end{equation*}
Then by the inductive hypothesis of case (1), we have
\begin{equation*}
\begin{split}
&V(0,1)\otimes V(0,(e-1)d+f-1)\otimes V(0,md+h)\\
&=\bigoplus_{k=0}^{e-2}\bigg\{\bigoplus_{i=0}^{f-1}\bigg[V(kd+i,(e-1+m-2k)d+f+h-2i)\oplus V(kd+i+1,(e-1+m-2k)d+f-2+h-2i)\bigg]\\
&\ \ \ \oplus\bigoplus_{j=f}^{h}\bigg[V(kd+j,(e-1+m-2k)d-1)\oplus V(kd+j+1,(e-1+m-2k)d-1)\bigg]\\
&\ \ \ \oplus \bigoplus_{r=h+1}^{f+h}\bigg[V(kd+r,(e-1+m-2k)d+h+f-2r)\oplus V(kd+r+1,(e-1+m-2k)d+h+f-2r-2)\bigg]\\
&\ \ \ \oplus\bigoplus_{s=f+h+1}^{(d-1)}\bigg[V(kd+s,(e+m-2-2k)d-1)\oplus V(kd+s+1,(e+m-2-2k)d-1)\bigg] \bigg\} \\
&\ \ \ \oplus \bigoplus_{i=0}^{f-1}\bigg[V((e-1)d+i,(m-e+1)d+h+f-2i)\oplus V((e-1)d+i+1,(m-e+1)d+h+f-2-2i)\bigg].
\end{split}
\end{equation*}

Again applying the inductive hypothesis of case (1) we get
\begin{equation*}
\begin{split}
&V(1,(e-1)d+f-2)\otimes V(0,md+h)\\
=&\bigoplus_{k=0}^{e-2}\bigg[\bigoplus_{i=0}^{f-2}V(kd+i+1,(e-1+m-2k)d+f-2+h-2i)\oplus\bigoplus_{j=f-1}^{h}V(kd+j+1,(e-1+m-2k)d-1)\\
& \oplus\bigoplus_{r=h+1}^{f+h-1}V(kd+r+1,(e-1+m-2k)d+h+f-2-2r)\oplus \bigoplus_{s=f+h}^{d-1}V(kd+s+1,(e+m-2-2k)d-1)\bigg]\\
& \oplus \bigoplus_{i=0}^{f-2}V(i+1,(m-e+1)d+h+f-2-2i).
\end{split}
\end{equation*}
Now subtracting the foregoing two identities, we obtain (1) for $\alpha=(e-1)d+f$ with $1\leq f\leq d-1.$

Finally we prove (1) for $f=d,$ i.e. $\alpha=ed.$ In the  following we assume $2\leq h\leq d-2.$ The proof for the situation with $h=1$ or $h=d-1$ can be given in a similar and much easier way and thus we omit it.

By Lemma 3.12, we have
\begin{equation*}
\begin{split}
&V(0,ed)\otimes V(0,md+h)\\
=&\{V(0,d)\otimes V(0,(e-1)d)-[V(1,ed-2)\oplus \bigoplus_{i=2}^{d-1}V(i,(e-1)d-1)\oplus V(d,(e-2)d)]\}\otimes V(0,md+h).
\end{split}
\end{equation*} Then we apply the inductive assumption to get
\begin{equation*}
\begin{split}
 &V(0,(e-1)d)\otimes V(0,md+h)\\
=&\bigoplus_{k=0}^{e-2}\bigg[V(kd,(e-1+m-2k)d+h)\oplus \bigoplus_{i=1}^{h}V(kd+i,(e-1+m-2k)d-1) \\
&\oplus  V(kd+h+1,(e-1+m-2k)d-h-2)    \\
& \oplus \bigoplus_{j=h+2}^{d-1}V(kd+j,(e+m-2-2k)d-1)\bigg] \oplus  V((e-1)d,(m-e+1)d+h),
\end{split}
\end{equation*}
and apply Lemma 3.17 to get
\begin{equation*}
\begin{split}
&V(0,d)\otimes V(0,(e-1)d)\otimes V(0,md+h)\\
=&\bigoplus_{k=0}^{e-2}\bigg\{\bigg[V(kd,(e+m-2k)d+h)\oplus  \bigoplus_{i=1}^{h}V(kd+i,(e+m-2k)d-1)\\
&\oplus V(kd+h+1,,(e+m-2k)d-h-2)\oplus \bigoplus_{j=h+2}^{d-1}V(kd+j,(e+m-1-2k)d-1)\\
&\oplus  V((k+1)d,(e+m-2-2k)d+h)\bigg]\oplus \bigoplus_{i=1}^h\bigg[V(kd+i,(e+m-2k)d-1)\\
&\oplus\bigoplus_{j=1}^{d-1}V(kd+i+j,(e-1+m-2k)d-1)\oplus V((k+1)d+i,(e-2+m-2k)d-1)\bigg]\\
&\oplus \bigg[V(kd+h+1,(e+m-2k)d-h-2)\oplus  \bigoplus_{i=1}^{d-h-2}V(kd+h+1+i,(e+m-1-2k)d-1)\\
&\oplus V((k+1)d,(e+m-2-2k)d+h) \oplus \bigoplus_{j=d-h}^{d-1}V(kd+h+1+j,(e+m-2-2k)d-1)\\
&\oplus  V((k+1)d+h+1,(e+m-2-2k)d-h-2)\bigg]\oplus \bigoplus_{i=h+2}^{d-1}\bigg[V(kd+i,(e+m-1-2k)d-1)\\
&\oplus\bigoplus_{j=1}^{d-1}V(kd+i+j,(e+m-2-2k)d-1)\oplus V((k+1)d+i,(e+m-3-2k)d-1)\bigg] \bigg\}\\
&\oplus \bigg[V((e-1)d,(m-e+2)d+h)\oplus  \bigoplus_{i=1}^{h}V((e-1)d+i,(m-e+2)d-1)\\
&\oplus V((e-1)d+h+1,,(m-e+2)d-h-2)\oplus \bigoplus_{j=h+2}^{d-1}V((e-1)d+j,(m-e+1)d-1)\\
&\oplus  V(ed,(m-e)d+h)\bigg].
\end{split}
\end{equation*}
Next we apply the inductive assumption of formulae in (1) and (4) to get
\begin{equation*}
\begin{split}
&\bigg[V(1,ed-2)\oplus \bigoplus_{i=2}^{d-1}V(i,(e-1)d-1)\oplus V(d,(e-2)d)\bigg]\otimes V(0,md+h)\\
=&\quad\bigg\{\bigoplus_{k=0}^{e-2}\bigg[\bigoplus_{i=0}^{h-1}V(kd+i+1,(e+m-2k)d-1) \oplus V(kd+h+1,(e+m-2k)d-2-h)\\
& \oplus \bigoplus_{r=h+1}^{d-2}V(kd+r+1,(e-1+m-2k)d-1) \oplus V((k+1)d,(e+m-2-2k)d+h)\bigg]\\
& \oplus \bigoplus_{i=0}^{h-1}V((e-1)d+i+1,(m-e+2)d-1) \oplus V((e-1)d+h+1,(m-e+2)d-h-2) \\
&\oplus  \bigoplus_{i=h+1}^{d-2}V((e-1)d+i+1,(m-e+1)d-1)\bigg\} \oplus \bigoplus_{i=2}^{d-1}\bigg\{\bigoplus_{k=0}^{e-3}\bigg[\bigoplus_{l=0}^{h}V(kd+l+i,(e-1+m-2k)d-1) \\
&\oplus \bigoplus_{r=h+1}^{d-1}V(kd+r+i,(e-2+m-2k)d-1)\bigg] \oplus \bigoplus_{l=0}^{h}V((e-2)d+l+i,(m-e+3)d-1) \\
&\oplus \bigoplus_{l=h+1}^{d-1}V((e-2)d+l+i,(m-e+2)d-1)\bigg\} \oplus \bigg\{\bigoplus_{k=0}^{e-3}\bigg[V((k+1)d,(e-2+m-2k)d+h)\\
&\oplus \bigoplus_{i=1}^{h}V((k+1)d+i,(e-2+m-2k)d-1)  \oplus  V((k+1)d+h+1,(e-2+m-2k)d-h-2)    \\
&\oplus \bigoplus_{j=h+2}^{d-1}V((k+1)d+j,(e+m-3-2k)d-1)\bigg] \oplus  V((e-1)d,(m-e+2)d+h)\bigg\}.
\end{split}
\end{equation*}
Now by subtracting the foregoing two identities we get the formula in (1) for $\alpha=ed.$

We are done.
\end{proof}

Next we will determine the ring structure of  $\R(\C_q)$. We will need some preparation of polynomials.

\begin{definition}
The series $\{f_k(x,y,z)\}_{k \ge 0}$  in $\Z[x,y,z]$ is defined inductively as follows:
\begin{itemize}
\item[1)] $f_0(x,y,z)=1,$  $f_1(x,y,z)=y$ and $f_d(x,y,z)=z;$ \\
\item[2)] $f_{md+l}(x,y,z)=yf_{md+l-1}(x,y,z)-xf_{md+l-2}(x,y,z)$ if $m \ge 0$ and $0\leq l \leq d-1;$\\
\item[3)] $f_{(m+1)d}(x,y,z)=zf_{md}(x,y,z)-xf_{(m+1)d-2}(x,y,z)\sum_{i=2}^{d-1}x^if_{md-1}(x,y,z)-x^df_{(m-1)d}(x,y,z).$
\end{itemize}
\end{definition}

\begin{remark}
If $k \leq d-1,$ then $f_k(x,y,z)$ is independent of $z$ and is essentially the generalized Fibonacci polynomials defined in Definition 3.5.
\end{remark}

In order to use $f_k(x,y,z)$ to construct a basis of $\Z[x,y,z],$  we need to define an order of the polynomials as follows.

\begin{definition}
For monomials, define $$\ord(x^iy^lz^m)=(m,l)$$ and we say $(m,l)\geq (m',l')$  if and only if either $m>m'$, or $m=m'$ and $l\geq l'$. Say $(m,l)= (m',l')$  if and only if $m=m'$ and $l=l'.$ Define the order of a polynomial to be the order of the highest order term.
\end{definition}

\begin{lemma}
The highest order term of $f_{md+l}(x,y,z)$ is $y^lz^m$.
\end{lemma}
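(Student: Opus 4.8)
The plan is to prove the claim by induction on $k$, following the three-part recursion in Definition 3.19, and tracking only the highest-order term under the order $\ord$ of Definition 3.22. Writing $k=md+l$ with $m\geq 0$ and $0\leq l\leq d-1$, I must show that the highest-order term of $f_{md+l}(x,y,z)$ is $y^lz^m$, i.e. $\ord(f_{md+l})=(m,l)$ and the leading coefficient is $1$.

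First I would dispose of the base cases directly from part 1) of Definition 3.19: $f_0=1$ has highest term $y^0z^0$, $f_1=y$ has highest term $y^1z^0$, and $f_d=z$ has highest term $y^0z^1$; these match $(0,0)$, $(0,1)$, $(1,0)$ respectively. Next I treat the generic step using recursion 2): for $0\leq l\leq d-1$ (with the pair $(m,l)$ not of the form $(m,0)$, which is handled separately), I have
\begin{equation*}
f_{md+l}=yf_{md+l-1}-xf_{md+l-2}.
\end{equation*}
By the induction hypothesis the two summands have highest terms $y\cdot y^{l-1}z^m=y^lz^m$ and $x\cdot y^{l-2}z^m$, whose orders are $(m,l)$ and $(m,l-2)$. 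Since $(m,l)>(m,l-2)$, multiplying by $x$ cannot raise the $z$-degree or $y$-degree, so the first term strictly dominates and the leading term is $y^lz^m$ with coefficient $1$, as desired. A small wrinkle is the transition $l=0$, i.e. the terms $f_{md}$ at the bottom of each block, which are governed by the separate recursion 3) rather than by 2); I would handle those via that third formula.

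The main obstacle is precisely recursion 3), namely
\begin{equation*}
f_{(m+1)d}=zf_{md}-xf_{(m+1)d-2}\sum_{i=2}^{d-1}x^if_{md-1}-x^df_{(m-1)d},
\end{equation*}
where several competing terms appear and one must check that the leading term $z\cdot y^0z^m=z^{m+1}$, of order $(m+1,0)$, genuinely dominates all the others. I would estimate each remaining term: by the induction hypothesis $\ord(f_{(m+1)d-2})=\ord(f_{md+(d-2)})=(m,d-2)$ and $\ord(f_{md-1})=\ord(f_{(m-1)d+(d-1)})=(m-1,d-1)$, so the middle sum carries a factor $z^m\cdot z^{m-1}=z^{2m-1}$, while $f_{(m-1)d}$ contributes order $(m-1,0)$. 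Since multiplication by powers of $x$ leaves both the $y$- and $z$-exponents unchanged, every competing term has $z$-exponent at most $m$ (indeed the middle and last terms have strictly smaller $z$-degree once $m\geq 1$), whereas the leading term $z^{m+1}$ has $z$-exponent $m+1$. Hence $\ord(f_{(m+1)d})=(m+1,0)$ with leading coefficient $1$, completing the induction. The only care needed is the boundary value $m=0$ in recursion 3), where $f_{(m-1)d}=f_{-d}$ is not defined; I expect this case to coincide with the base datum $f_d=z$ already established, so the recursion is only invoked for $m\geq 1$.
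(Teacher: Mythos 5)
Your overall strategy---induction on $k$ tracking leading terms through the three clauses of Definition 3.19, with base cases $f_0,f_1,f_d$---is exactly the paper's (very terse) argument, and your treatment of clause 2) and of the boundary issues (the values $f_{md}$ being governed by clause 3), and clause 3) being invoked only for $m\geq 1$) is correct. The genuine problem is your handling of clause 3). You read its middle term literally as the product $xf_{(m+1)d-2}\cdot\sum_{i=2}^{d-1}x^if_{md-1}$, computed (correctly, under that reading) that by the induction hypothesis it carries $z$-degree $z^m\cdot z^{m-1}=z^{2m-1}$, and then asserted that ``every competing term has $z$-exponent at most $m$.'' These two statements contradict each other: for $m\geq 2$ and $d\geq 3$ the product term has order $(2m-1,\,2d-3)$, which is strictly greater than $(m+1,0)$ in the order of Definition 3.22, so under the product reading the induction step collapses---and indeed the lemma itself would then be false, since no other term could cancel that one.

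The resolution is that Definition 3.19(3) contains a typo and must be read as a difference of four terms,
\[
f_{(m+1)d}(x,y,z)=zf_{md}(x,y,z)-xf_{(m+1)d-2}(x,y,z)-\sum_{i=2}^{d-1}x^if_{md-1}(x,y,z)-x^df_{(m-1)d}(x,y,z),
\]
as is forced by its intended role of mirroring the Clebsch--Gordan formula of Lemma 3.12, so that $\Phi(x^if_k(x,y,z))=[V(i,k)]$ holds in the proof of Theorem 3.25. With this reading, the competing terms have orders $(m,d-2)$, $(m-1,d-1)$ and $(m-1,0)$ respectively, each strictly smaller than $(m+1,0)$ (multiplication by powers of $x$ does not affect the order), so the leading term of $f_{(m+1)d}$ is $z\cdot z^m=z^{m+1}$ with coefficient $1$, and your induction then goes through verbatim and coincides with the proof the paper has in mind. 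As written, however, your key step rests on an inequality that your own computation shows to be false.
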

\begin{proof}
It is easy to see that the highest order term of $f_{l}(x,y,z)$ is $y^l$ if $l\leq d-1$ and that of $f_{d}(x,y,z)$ is $z$. By induction on $m$ one can easily prove the lemma.
\end{proof}

\begin{lemma}
$\big\{x^if_k(x,y,z)|i,k \ge 0\big\}\cup \big\{x^j\big(y^l-\sum_{s=0}^l\binom{l}{s}x^s\big)f_{md-1}(x,y,z)| j \ge 0, \ l,m \ge 1 \big\}$ is a basis of $\Z[x,y,z].$
\end{lemma}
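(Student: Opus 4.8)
The plan is to run a leading--term (unitriangular change of basis) argument against the monomial basis $\{x^ay^bz^c\mid a,b,c\ge 0\}$ of $\Z[x,y,z]$. First I would record the elementary observation that $\sum_{s=0}^{l}\binom{l}{s}x^s=(1+x)^l$, so the second family consists of the polynomials $x^j\bigl(y^l-(1+x)^l\bigr)f_{md-1}(x,y,z)$. I would then refine the order of Definition 3.21 to a genuine total order $\prec$ on monomials by breaking ties in the $x$--exponent, i.e.\ declaring $x^ay^bz^c\prec x^{a'}y^{b'}z^{c'}$ when $(c,b)<(c',b')$ in the sense of Definition 3.21, or $(c,b)=(c',b')$ and $a<a'$. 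Since this is the lexicographic order on the triples $(c,b,a)\in\mathbb{N}^3$, it is a well--order, which is exactly what will make the triangular argument valid over $\Z$.

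The core computation is to identify the $\prec$--leading monomial of each proposed basis vector, together with its coefficient. For the first family, Lemma 3.22 gives that $f_{md+l}$ has unique top term $y^lz^m$ with coefficient $1$ and all remaining terms of strictly smaller $(m,l)$--order; hence $x^if_{md+l}$ has $\prec$--leading monomial $x^iy^lz^m$ with coefficient $1$, and as $i,m\ge 0$ and $0\le l\le d-1$ vary these exhaust precisely the monomials with $0\le \deg_y\le d-1$. For the second family I would note that the factor $y^l-(1+x)^l$ has $\prec$--leading term $y^l$ (coefficient $1$) and that $f_{md-1}=f_{(m-1)d+(d-1)}$ has leading term $y^{d-1}z^{m-1}$ by Lemma 3.22; multiplying and tracking the $(\deg_z,\deg_y)$ bidegrees shows that $x^j\bigl(y^l-(1+x)^l\bigr)f_{md-1}$ has $\prec$--leading monomial $x^jy^{\,l+d-1}z^{\,m-1}$ with coefficient $1$, and as $j\ge 0$, $l\ge 1$, $m\ge 1$ vary these are exactly the monomials with $\deg_y\ge d$. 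Consequently the map sending each proposed basis vector to its $\prec$--leading monomial is a bijection onto the full monomial basis.

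Given this, the conclusion follows by a standard unitriangularity argument. Writing each basis vector as its leading monomial plus a finite $\Z$--combination of strictly $\prec$--smaller monomials, spanning follows by well--founded back--substitution, which terminates because $\prec$ is a well--order; and $\Z$--linear independence follows by inspecting the coefficient of the $\prec$--largest leading monomial occurring in a hypothetical relation, which cannot cancel since leading monomials are distinct with unit coefficients. I expect the only real obstacle to be the second paragraph's second step: verifying rigorously that $x^j\bigl(y^l-(1+x)^l\bigr)f_{md-1}$ has the claimed leading monomial and unit coefficient, and in particular that no competing term of equal $(\deg_z,\deg_y)$ arises. This rests on the uniqueness of the top term of $f_{md-1}$ granted by Lemma 3.22, together with the fact that $l\ge 1$ forces $y^l\cdot y^{d-1}$ to strictly dominate the contributions of the pure $x$--powers $(1+x)^l$ and of the lower terms of $f_{md-1}$. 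Everything else is bookkeeping with the well--order $\prec$.
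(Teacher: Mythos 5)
Your proof is correct and is essentially the paper's own argument: the paper reduces a general polynomial by subtracting $h(x)f_{md+l}(x,y,z)$ when the leading $y$-degree $l$ is at most $d-1$, and $h(x)\bigl(y^{l-d+1}-\sum_{s=0}^{l-d+1}\binom{l-d+1}{s}x^s\bigr)f_{(m+1)d-1}(x,y,z)$ when $l>d-1$, which is precisely your bijection between leading monomials ($\deg_y\le d-1$ versus $\deg_y\ge d$) and the two families. Your refinement of the order in Definition 3.21 to a total well-order by breaking ties in the $x$-exponent is a useful tightening of the paper's one-sentence independence claim, but the underlying triangularity mechanism is the same.
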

\begin{proof}
Let $g(x,y,z) \in \Z[x,y,z]$ and the highest order term of $g(x,y,z)$ is $h(x)y^lz^m$ where $h(x)\in \Z[x].$
If $l>d-1$, set $$g_1(x,y,z)=g(x)-h(x)\bigg[y^{l-d+1}-\sum_{s=0}^{l-d+1}\binom{l-d+1}{s}x^s\bigg]f_{(m+1)d-1}(x,y,z);$$
and if $l\leq d-1$, set $$g_1(x,y,z)=g(x)-h(x)f_{md+l}(x,y,z). $$
Then the order of $g_1(x,y,z)$ is less than $g(x,y,z).$ Repeat the process for enough times and we will eventually arrive at some $g_k(x,y,z)\in \Z[x].$ This implies that $g(x,y,z)$ is a $\Z$-combination of $\{x^if_k(x,y,z)| i,k \ge 0\}$ and $\big\{x^j\big(y^l-\sum_{s=0}^l\binom{l}{s}x^s\big)f_{md-1}(x,y,z)|j \ge 0,  \ l,m \ge 1 \big\}.$ By considering the order of the highest order term of each polynomial we can get the linear independence.
\end{proof}

\begin{lemma}
$f_{d-1}(x,y,z)$ is a factor of $f_{md-1}(x,y,z)$ for all $m\geq 1.$
\end{lemma}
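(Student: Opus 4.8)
The plan is to induct on $m$, the base case $m=1$ being immediate since $f_{d-1}$ divides itself. The crux is to produce a recursion linking $f_{md-1}$ to $f_{(m-1)d-1}$ in which the intended factor $f_{d-1}$ is manifest. To that end I would first isolate the behaviour of the sequence inside a single length-$d$ block: for the indices $(m-1)d+1,\dots,md-1$ the defining rule (2) of Definition 3.19 is simply the homogeneous two-term recursion $f_n=yf_{n-1}-xf_{n-2}$, and the $z$-twisted rule (3) only intervenes at the multiples of $d$, none of which lie strictly between $(m-1)d$ and $md$. Thus the passage across one block is governed entirely by the $z$-free Fibonacci recursion of Definition 3.5, with the $z$-dependence frozen into the two seed values $f_{(m-1)d}$ and $f_{(m-1)d-1}$.

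Next I would establish the elementary transfer identity: if a sequence $(a_n)$ satisfies $a_n=ya_{n-1}-xa_{n-2}$ for $N+1\le n\le N+j$, then
\[ a_{N+j}=f_j(x,y)\,a_N-x\,f_{j-1}(x,y)\,a_{N-1}, \]
where $f_\bullet(x,y)$ are the generalized Fibonacci polynomials of Definition 3.5 (with the convention $f_{-1}=0$). This is an immediate induction on $j$ using only the recursion $f_j=yf_{j-1}-xf_{j-2}$ of Definition 3.5, and by Remark 3.20 these coefficients coincide with $f_{d-1}(x,y,z)$ and $f_{d-2}(x,y,z)$ for the indices we need. Specializing to $a_n=f_n(x,y,z)$, $N=(m-1)d$ and $j=d-1$ then yields the key identity
\[ f_{md-1}=f_{d-1}(x,y)\,f_{(m-1)d}-x\,f_{d-2}(x,y)\,f_{(m-1)d-1}. \]

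Finally I would run the induction on this identity. The first summand on the right is visibly a multiple of $f_{d-1}$; by the induction hypothesis $f_{d-1}$ divides $f_{(m-1)d-1}$, hence also the second summand $x\,f_{d-2}\,f_{(m-1)d-1}$; therefore $f_{d-1}$ divides $f_{md-1}$, which closes the step. I expect the only genuinely delicate point to be the bookkeeping in the transfer step: pinning down the exact coefficients $f_{d-1}$ and $-x f_{d-2}$ (both the signs and the one-step shift in the Fibonacci index), and verifying that the block $(m-1)d+1,\dots,md-1$ indeed crosses no multiple of $d$, so that rule (3) never interferes. Once these are secured the argument is purely formal and needs no information about $f_{(m-1)d}$ beyond its being a polynomial in $\Z[x,y,z]$.
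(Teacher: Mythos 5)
Your proof is correct, and strategically it is the same as the paper's: both establish, by an inner induction, an identity expressing the block-end polynomial through the previous block boundary with $f_{d-1}$ as an explicit coefficient, and then induct on $m$. The difference lies in the identity itself, and here yours is the right one while the paper's is not. The paper proves, by induction on $i$ for $1\le i\le d-1$, the identity
\begin{equation*}
f_i(x,y,z)f_{md}(x,y,z)=f_{md+i}(x,y,z)+\sum_{j=1}^{i}x^jf_{md-1}(x,y,z),
\end{equation*}
and then specializes it at $i=d-1$. This identity is false in $\Z[x,y,z]$ for $i\ge 2$: its induction step requires $y\sum_{j=1}^{i-1}x^j=(1+x)\sum_{j=1}^{i-1}x^j$, i.e.\ $y=1+x$; concretely, for $d=3$, $m=1$, $i=2$ one computes $f_2f_3-f_5=xy(y^2-x)$, not $(x+x^2)(y^2-x)$. (The paper's identity is exactly the image under $\Phi$ of the decomposition in Lemma 3.11, so it does hold modulo the ideal $J'$ appearing in the proof of Theorem 3.25; but a congruence modulo $J'$ cannot establish divisibility in $\Z[x,y,z]$.) Your transfer identity replaces the coefficient $\sum_{j=1}^{i}x^j$ by $xf_{i-1}(x,y)$: in the equivalent form $f_if_{md}=f_{md+i}+xf_{i-1}f_{md-1}$ the inner induction genuinely closes, since the error terms recombine as $x\bigl(yf_{i-2}-xf_{i-3}\bigr)f_{md-1}=xf_{i-1}f_{md-1}$, and the specialization gives $f_{md-1}=f_{d-1}(x,y)f_{(m-1)d}-xf_{d-2}(x,y)f_{(m-1)d-1}$, from which your outer induction on $m$ concludes. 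Your bookkeeping points all check out: none of the indices $(m-1)d+1,\dots,md-1$ is a multiple of $d$, so only rule (2) of Definition 3.19 acts inside a block; Remark 3.20 identifies $f_{d-1}(x,y,z)$ and $f_{d-2}(x,y,z)$ with the $z$-free Fibonacci polynomials; and taking $m=1$ as the base case avoids the undefined $f_{(m-1)d-1}$. So your argument does not merely reprove the lemma by another route: it repairs a genuine error in the paper's proof, while confirming that the lemma itself is true.
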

\begin{proof}
Note first that for any $1\leq i\leq d-1$
\begin{equation}
f_i(x,y,z)f_{md}(x,y,z)=f_{md+i}(x,y,z)+\sum_{j=1}^{i}x^jf_{md-1}(x,y,z).
\end{equation}
Indeed, for $i=1,$
\begin{equation*}
\begin{split}
f_1(x,y,z)f_{md}(x,y,z)&=yf_{md}(x,y,z)\\
&=f_{md+1}(x,y,z)+xf_{md-1}(x,y,z).
\end{split}
\end{equation*}
Assume that
\begin{equation*}
f_i(x,y,z)f_{md}(x,y,z)=f_{md+i}(x,y,z)+\sum_{j=1}^{i}x^jf_{md-1}(x,y,z)
\end{equation*}
for $1\leq i\leq l-1<d-1,$ then we have
\begin{equation*}
\begin{split}
f_l(x,y,z)f_{md}(x,y,z)=&\big[yf_{l-1}(x,y,z)-xf_{l-2}(x,y,z)\big]f_{md}(x,y,z)\\
=&y\bigg[f_{md+l-1}(x,y,z)+\sum_{j=1}^{l-1}x^jf_{md-1}(x,y,z)\bigg]\\
&-x\bigg[f_{md+l-2}(x,y,z)+\sum_{j=1}^{l-2}x^jf_{md-1}(x,y,z)\bigg]\\
=&f_{md+l}(x,y,z)+\sum_{j=1}^{l}x^jf_{md-1}(x,y,z).
\end{split}
\end{equation*}
So by induction we have proved
\begin{equation*}
f_i(x,y,z)f_{md}(x,y,z)=f_{md+i}(x,y,z)+\sum_{j=1}^{i}x^jf_{md-1}(x,y,z).
\end{equation*}
In particular we have
 \begin{equation*}
f_{(m+1)d-1}(x,y,z)=f_{d-1}(x,y,z)f_{md}(x,y,z)-\sum_{j=1}^{d-1}x^jf_{md-1}(x,y,z).
\end{equation*}
This equation leads easily to $f_{d-1}(x,y,z)|f_{(m+1)d-1}(x,y,z)$ with induction on $m.$
\end{proof}
\begin{theorem}
$\R(\C_q)$ is isomorphism to $\Z[x,y,z]/J$ where $J$ is the ideal of $\Z[x,y,z]$ generated by
$$\bigg\{x^n-1, \ (y-x-1)\bigg[\sum_{i=0}^{[\frac{d-1}{2}]}(-1)^i\binom{d-1-i}{i}x^iy^{d-1-2i}\bigg]\bigg\}.$$
\end{theorem}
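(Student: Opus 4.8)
The plan is to adapt the strategy of Theorem 3.7 to the three generators furnished by Corollary 3.13. First I would define the ring homomorphism
\[
\Phi : \Z[x,y,z] \To \R(\C_q), \quad x \mapsto [V(1,0)], \quad y \mapsto [V(0,1)], \quad z \mapsto [V(0,d)],
\]
which is surjective by Corollary 3.13. The crucial preliminary step is to prove, by induction on $k$, the identity
\[
\Phi(x^i f_k(x,y,z)) = [V(i,k)] \quad \text{for all } i,k \ge 0,
\]
where $f_k(x,y,z)$ is the polynomial of Definition 3.19. The base cases $f_0=1$, $f_1=y$, $f_d=z$ correspond to $[V(0,0)]=\1$, $[V(0,1)]$, $[V(0,d)]$, while the three recursions defining $f_k$ are exactly the Clebsch--Gordan rules: the relation $f_{md+l}=yf_{md+l-1}-xf_{md+l-2}$ matches Lemma 3.9 and the relation for $f_{(m+1)d}$ matches Lemma 3.12, once one uses Lemma 3.8 in the form $\Phi(x^i f_k)=[V(1,0)]^i\,\Phi(f_k)=[V(i,k)]$.

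Next I would check $J\subseteq\ker\Phi$. For the first generator, $\Phi(x^n-1)=[V(1,0)]^n-\1=0$ by Lemma 3.8. For the second, note that by equation (3.9) and Remark 3.20 the bracketed sum is exactly $f_{d-1}(x,y,z)$, so the generator is $(y-x-1)f_{d-1}(x,y,z)$; applying $\Phi$ gives $[V(0,1)][V(0,d-1)]-[V(1,d-1)]-[V(0,d-1)]$, which vanishes because Lemma 3.9 (with $m=0$, $l=d-1$) reads $V(0,1)\otimes V(0,d-1)=V(0,d-1)\oplus V(1,d-1)$. Hence $\Phi$ descends to a surjection $\overline{\Phi}:\Z[x,y,z]/J\To\R(\C_q)$.

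The heart of the argument is injectivity. Here I would invoke the $\Z$-basis of $\Z[x,y,z]$ from Lemma 3.23,
\[
\{x^i f_k(x,y,z)\mid i,k\ge 0\}\cup\Big\{x^j\big(y^l-\textstyle\sum_{s=0}^l\binom{l}{s}x^s\big)f_{md-1}(x,y,z)\mid j\ge 0,\ l,m\ge 1\Big\},
\]
and show that modulo $J$ the second family disappears. Indeed, since $f_{d-1}\mid f_{md-1}$ by Lemma 3.24 and $(y-x-1)f_{d-1}\in J$, one gets $(y-(x+1))f_{md-1}\in J$, whence $y^l f_{md-1}\equiv(x+1)^l f_{md-1}=\big(\sum_{s=0}^l\binom{l}{s}x^s\big)f_{md-1}\pmod J$ by induction on $l$; multiplying by $x^j$ shows every element of the second family lies in $J$. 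Consequently $\Z[x,y,z]/J$ is spanned by the images of $\{x^i f_k\}$, and using $x^n\equiv 1$ to reduce the exponent of $x$ modulo $n$, it is spanned by $\{x^i f_k\bmod J\mid 0\le i\le n-1,\ k\ge 0\}$. Under $\overline{\Phi}$ these map bijectively onto $\{[V(i,k)]\mid 0\le i\le n-1,\ k\ge 0\}$, the complete set of indecomposables, which is a $\Z$-basis of $\R(\C_q)$. A spanning set mapping onto a basis forces $\overline{\Phi}$ to be injective, completing the proof.

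The main obstacle I anticipate is precisely this injectivity step, namely verifying that the entire second family of the Lemma 3.23 basis collapses modulo $J$. The clean divisibility $f_{d-1}\mid f_{md-1}$ of Lemma 3.24 together with the single relation $(y-x-1)f_{d-1}$ is exactly what makes the collapse work, so the technical crux is recognizing that these two ingredients suffice and that no further relations are needed.
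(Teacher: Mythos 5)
Your proposal is correct and takes essentially the same approach as the paper: the same surjection $\Phi$, the identity $\Phi(x^if_k(x,y,z))=[V(i,k)]$ via the Clebsch--Gordan lemmas, the $\Z$-basis of Lemma 3.23, the divisibility $f_{d-1}\mid f_{md-1}$ of Lemma 3.24, and the final spanning-set-onto-basis argument for injectivity. The only organizational difference is that the paper first establishes the isomorphism for the auxiliary ideal $J'$ generated by $x^n-1$ and all $\big[y^l-\sum_{j=0}^l\binom{l}{j}x^j\big]f_{md-1}(x,y,z)$ (checking these lie in $\ker\Phi$ via tensor decompositions) and then proves $J'=J$, whereas you verify $J\subseteq\ker\Phi$ directly and collapse the second basis family modulo $J$ inside the polynomial ring---the same mathematical content in a slightly different order.
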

\begin{proof}
Define
\begin{eqnarray*}
\Phi : \Z[x,y,z] & \longrightarrow & \R(\C_q) \\
         x  & \mapsto & [V(1,0)], \\
         y  & \mapsto & [V(0,1)], \\
         z  & \mapsto & [V(0,d)].
\end{eqnarray*}
By Corollary 3.13, the ring $\R(\C_q)$ is generated by $[V(1,0)], \ [V(0,1)]$ and $ [V(0,d)].$ Hence $\Phi$ is surjective.
From the definition of $f_k(x,y,z)$ and Theorem 3.18 we can see that
\begin{equation*}
\Phi(x^if_k(x,y,z))=[V(1,0)]^{i}f_k([V(1,0)],[V(0,1)],[V(0,d)]) =[V(i,k)].
\end{equation*}
Because
\begin{equation*}
\begin{split}
V(0,1)^{\otimes l}\otimes V(0,md-1)&=\sum_{j=0}^{l}\binom{l}{j}V(j,md-1)\\
&=\sum_{j=0}^{l}\binom{l}{j}V(1,0)^{\otimes j}\otimes V(0,md-1),
\end{split}
\end{equation*}
we have
\begin{equation*}
\Phi\bigg(\bigg[y^l-\sum_{j=0}^l\binom{l}{j}x^j\bigg]f_{md-1}(x,y,z)\bigg)=0.
\end{equation*}
We also have $\Phi (x^n-1)=0$ since $ V(1,0)^{\otimes n}=V(0,0).$ This implies that
\begin{equation*}
J'=\bigg\langle x^n-1, \ \big\{\big[y^l-\sum_{j=0}^l\binom{l}{i}x^j\big]f_{md-1}(x,y,z)|l,m \ge 1 \big\}\bigg\rangle \subseteq \ker \Phi,
\end{equation*}
and so $\Phi$ induces an epimorphism
\begin{eqnarray*}
\overline{\Phi} : \Z[x,y,z]/J' & \longrightarrow & \R(\C_q) \\
         \overline{x}  & \mapsto & [V(1,0)], \\
         \overline{y}  & \mapsto & [V(0,1)], \\
         \overline{z}  & \mapsto & [V(0,d)].
\end{eqnarray*}
Next we prove that $\overline{\Phi}$ is in fact a ring isomorphism. Write $\overline{g(x)}=g(x)+J'\in \Z[x,y,z]/J'$ and by Lemma 3.20 we have
\begin{equation*}
g(x)=\sum c_{i,k}x^if_k(x,y,z) + \sum c_{j,l,m}x^j\big[y^l-\sum_{j=0}^l\binom{l}{j}x^j\big]f_{md-1}(x,y,z),
\end{equation*}
where $c_{i,k}, \ c_{j,l,m}\in \Z.$  So $\overline{g(x)}=\sum c_{i,k}\overline{x^if_k(x,y,z)}.$  It follows that $\{\overline{x^if_k(x,y,z)}|i,k \ge 0\}$ spans  $\Z[x,y,z]/J'.$ Note that $\overline{\Phi}(\overline{x^if_k(x,y,z)})=[V(i,k)]$, and $\{[V(i,k)]|i,k\geq 0\}$ is a basis of $\R(\C_q),$ We can see that $\{\overline{x^if_k(x,y,z)}|i,k \ge 0\}$ is linearly independent, hence a basis of $\Z[x,y,z]/J'.$ Thus $\overline{\Phi}$ is an isomorphism.

Finally we prove that $J'=\big\langle x^n-1,(y-x-1)f_{d-1}(x,y,z)\big\rangle =J.$ It is enough to show that $(y-x-1)f_{d-1}(x,y,z)$ is a factor of $\big[y^l-\sum_{j=0}^l\binom{l}{j}x^j\big]f_{md-1}(x,y,z).$ This follows from Lemma 3.24 and the facts that $y^l-\sum_{j=0}^l\binom{l}{j}x^j=y^l-(1+x)^l$ and $f_{d-1}(x,y,z)=\sum_{i=0}^{[\frac{d-1}{2}]}(-1)^i\binom{d-1-i}{i}x^iy^{d-1-2i}.$

The proof is completed.
\end{proof}

\section{The Green ring of the infinite linear quiver}

\subsection{Hopf structures over the infinite linear quiver}
Let $G=\langle g\rangle$ be the infinite cyclic group and let $\mathcal{A}$
denote the Hopf quiver $Q(G,g).$ Then $\mathcal{A}$ is the
infinite linear quiver. Let $e_i$ denote the arrow $g^i
\longrightarrow g^{i+1}$ and $p_i^l$ the path $e_{i+l-1} \cdots e_i$
of length $l \ge 1,$ for each $i \in \Z.$ The notation
$p_i^0$ is understood as $g^i.$

We collect in this subsection some useful results of graded Hopf
structures on $\k \mathcal{A}.$ The graded Hopf structures are in
one-one correspondence to the left $\k G$-module structures on $\k e_0,$
and thus in one-one correspondence to non-zero elements of $\k.$ Assume $g.e_0=qe_0$ for some $0
\ne q \in \k.$ The corresponding $kG$-Hopf bimodule is $\k e_0 \otimes
\k G.$ We identify $e_i$ and $e_0 \otimes g^i,$ and in this way we
have a $\k G$-Hopf bimodule structure on $\k \mathcal{A}_1.$ We denote
the corresponding graded Hopf algebra by $\k \mathcal{A}(q).$ Recall that the path multiplication formula of $\k \mathcal{A}(q)$ is as follows
\begin{equation}
p_i^l \cdot p_j^m = q^{im} {{l+m}\choose l} _q p_{i+j}^{l+m}.
\end{equation}
In particular, we have
\begin{equation*}
g \cdot p_i^l=q^lp_{i+1}^l, \ \ p_i^l \cdot g=p_{i+1}^l, \ \
a_0^l=l_q!p_0^l \ .
\end{equation*}

The following lemma presents $\k\mathcal{A}(q)$ by generators with relations.
\begin{lemma}\emph{(\cite[Lemma 4.2]{hsaq1}) }
The algebra $\k\mathcal{A}(q)$ can be presented via generators with
relations as follows:
\begin{enumerate}
  \item when $q=1,$ generators: $g, \ g^{-1}, \ e_0.$ relations: $gg^{-1}=1=g^{-1}g, \ ge_0=e_0g.$
  \item when $q \ne 1$ is not a root of unity, generators: $g, \ g^{-1}, \ e_0.$
        relations: $gg^{-1}=1=g^{-1}g, \ ge_0=qe_0g.$
  \item when $q \ne 1$ is a root of unity of order $d,$ generators: $g, \ g^{-1}, \ e_0, \
        p_0^d.$ relations: $gg^{-1}=1=g^{-1}g, \ e_0^d=0, \ ge_0=qe_0g, \ gp_0^d=p_0^dg, \ e_0p_0^d=p_0^de_0.$
\end{enumerate}
\end{lemma}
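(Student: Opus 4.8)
The plan is to realize $\k\mathcal{A}(q)$ as the abstract algebra on the listed generators and relations by the standard three-step scheme: verify the relations hold, show the generators generate, and then cut the abstractly-presented algebra down to a spanning set that matches the path basis $\{p_i^l \mid i \in \Z,\ l \ge 0\}$ of $\k\mathcal{A}$. Throughout, the only tool needed is the path multiplication formula (4.1) together with its specializations and the Gaussian-binomial vanishing criterion (3.1); the argument runs exactly parallel to the cyclic case (Lemma 3.2), the sole structural change being that the finite relation $g^n=1$ is replaced by passing to the infinite cyclic group, which is what forces the generator $g^{-1}$ and drops any power relation on $g$. First I would check the relations directly: since $g=p_1^0$ and $g^{-1}=p_{-1}^0$ are group-likes of $G=\Z$, formula (4.1) gives $g^ig^j=g^{i+j}$, so $gg^{-1}=1=g^{-1}g$; the commutation $ge_0=qe_0g$ (or $=e_0g$ when $q=1$) follows from $g\cdot p_0^1=qp_1^1$ and $p_0^1\cdot g=p_1^1$; and in case (3), $e_0^d=d_q!\,p_0^d=0$ because $d_q=0$, while $q^d=1$ yields $gp_0^d=p_0^dg$ and a short computation with $(d+1)_q=1$ yields $e_0p_0^d=p_0^de_0=p_0^{d+1}$, so $p_0^d$ is central.

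Next I would show the proposed generators generate $\k\mathcal{A}(q)$. Since $g^i\cdot p_0^l=q^{il}p_i^l$, it suffices to produce each $p_0^l$. In cases (1) and (2) one has $l_q!\neq 0$ for all $l$, so $p_0^l=(l_q!)^{-1}e_0^l$ and we are done. In case (3), $l_q!=0$ once $l\ge d$, so $e_0$ alone only recovers $p_0^l$ for $l\le d-1$; this is precisely why the extra generator $p_0^d$ is needed. Writing $l=ad+b$ with $0\le b\le d-1$, one uses (4.1) to express $p_0^{ad+b}$ as an invertible scalar multiple of $(p_0^d)^ae_0^b$, the scalar being a product of the Gaussian binomials $\binom{jd}{d}_q$ and $\binom{ad+b}{b}_q$, each of which is nonzero by criterion (3.1).

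Then I would bound the abstractly-presented algebra $B$ from above using the relations. The commutation relations move all powers of $g^{\pm1}$ to the left; in cases (1),(2) this shows $B$ is spanned by $\{g^ie_0^l\mid i\in\Z,\ l\ge 0\}$, and in case (3), since $p_0^d$ is central and $e_0^d=0$, that $B$ is spanned by $\{g^i(p_0^d)^ae_0^b\mid i\in\Z,\ a\ge 0,\ 0\le b\le d-1\}$. Finally I would conclude by comparing with the path basis: the canonical surjection $B\twoheadrightarrow\k\mathcal{A}(q)$ sends each spanning element to a nonzero scalar times a distinct path $p_i^l$ (with $l=ad+b$ recovered by the division algorithm in case (3)). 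Because $\{p_i^l\}$ is a basis of $\k\mathcal{A}$, the spanning set is linearly independent, so the surjection is an isomorphism.

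The main obstacle is case (3). The real content there is the nonvanishing of the relevant Gaussian binomials, which guarantees both that $(p_0^d)^ae_0^b$ genuinely recovers $p_0^{ad+b}$ up to an invertible scalar and that the spanning elements map bijectively onto the path basis; combined with $e_0^d=0$ and the centrality of $p_0^d$, this pins down exactly which relations must be adjoined beyond those of cases (1),(2). The remaining verifications are routine applications of (4.1).
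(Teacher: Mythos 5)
Your proposal is correct: the relation checks via the multiplication formula, the generation argument resting on $l_q!\neq 0$ in cases (1)--(2) and on the nonvanishing of $\binom{jd}{d}_q$ and $\binom{ad+b}{ad}_q$ (plus $b_q!$) in case (3), and the final comparison of the spanning set $\{g^i e_0^l\}$ (resp. $\{g^i (p_0^d)^a e_0^b\}$) against the path basis $\{p_i^l\}$ all go through, and together they do establish the presentation. Note that the paper itself contains no proof of this statement --- it is imported directly from \cite[Lemma 4.2]{hsaq1} --- so there is no in-paper argument to compare against; your proof is the standard one and follows the same scheme used in that reference for both the cyclic quiver (Lemma 3.1 here) and the infinite linear quiver.
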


\subsection{The tensor category associated to $\k \mathcal{A}(q)$}
For each $i\in \Z$  and $ l\geq 0$, let $V(i,l)$ be a linear space with $\k$-basis $\{v^i_j\}_{0\leq j\leq l}.$ We endow on $V(i,l)$ a $\mathcal{A}$-representation structure by letting $$V(i,l)_j=\left\{
                                                              \begin{array}{ll}
                                                                \k v^i_{j-i}, & \hbox{$i \le j \le i+l;$} \\
                                                                0, & \hbox{otherwise.}
                                                              \end{array}
                                                            \right.$$
and letting $V(i,l)_{e_j}$ maps $v_{j-i}^i$ to $v_{j-i+1}^i$ for all $i \le j \le i+l.$ Here by convention $v^i_k$ is understood as $0$ if $k>l.$ Note also that $V(i,l)$ can be viewed as a right $\k \mathcal{A}(q)$-comodule via
\begin{equation*}
\begin{split}
\d :V(i,l) \longrightarrow  V(i,l)\otimes \k\mathcal{A}(q)\\
v^i_m \mapsto \sum^l_{j=m}v^i_j\otimes p_{i+m}^{j-m}.
\end{split}
\end{equation*}
By (4.1), the comodule structure of $V(i,l)\otimes V(j,m)$ is given by
\begin{equation*}
\d(v^i_s\otimes v^j_t)=\sum^l_{x=s}\sum^m_{y=t}q^{(i+s)(y-t)}\binom{x+y-s-t}{x-s}_qv^i_x\otimes v^j_y\otimes p_{i+j+s+t}^{x+y-s-t}.
\end{equation*}
It is well known that $\{V(i,l)|i\in \Z, \ l \ge 0\}$ is a complete set of finite dimensional indecomposable representations of the quiver $\mathcal{A},$ and thus a complete set of finite dimensional indecomposable comodules of $\k\mathcal{A}(q).$  Similar to subsection 3.2, we will also view $V(i,l)\otimes V(j,m)$ as a rational module of $(\k\mathcal{A}(q))^*,$ the dual algebra of $\k\mathcal{A}(q).$ The module structure map is given by
\begin{equation*}
  (p_e^f)^*(v^i_s\otimes v^j_t)=\sum_{x=0}^fq^{(i+s)(f-x)}\binom{f}{x}_qv^i_{s+x}\otimes v^j_{t+f-x}.
\end{equation*}
In the section, we let $\md_q$ denote the tensor category of finite dimensional $\k \mathcal{A}(q)$-comodules and $\R(\md_q)$ the Green ring of $\md_q$.

Recall that Lemma 3.10 plays an important role in the computation of the Clebsh-Gordan formulae of $\C_q.$  Here for $\md_q$ we have a similar lemma. As the proof is similar, we do not repeat it.
\begin{lemma}
Assume $V=\oplus_{s,t}V(s,t)$ and $V(i,j) \subset V$ in $\md_q$. Then there exists an inclusion map $\phi:V(i,j) \To V(s,t)$ for some indecomposable direct summand $V(s,t)$ of $V$ with $t\geq j$ and $i+j=s+t$.
\end{lemma}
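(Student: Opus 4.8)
The plan is to transplant the proof of Lemma 3.10 to the present setting, the only structural change being that all vertex indices now live in $\Z$ rather than in $\Z_n.$ First I would let $\psi:V(i,j)\To V$ be the given comodule inclusion and expand the image of the lowest basis vector as $\psi(v_0^i)=\sum_{s,t}\sum_{l=0}^{t}a_l^{s,t}v_l^s$ in the bases of the indecomposable summands $V(s,t)$ of $V.$

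Next I would compute $\d\circ\psi(v_0^i)$ using the comodule structure map of $\md_q$ recorded above, namely $\d(v_l^s)=\sum_{m=l}^{t}v_m^s\otimes p_{s+l}^{m-l},$ and set it equal to $(\psi\otimes\Id)\circ\d(v_0^i)=\sum_{k=0}^{j}\psi(v_k^i)\otimes p_i^k,$ which holds because $\psi$ is a comodule morphism. Comparing the third tensor factors, the path $p_i^j$ of maximal length $j$ must occur on the right-hand side (its coefficient $\psi(v_j^i)$ is nonzero by injectivity of $\psi$), so there exist $s,t$ and an index $l$ with $a_l^{s,t}\ne 0$ and $p_{s+l}^{t-l}=p_i^j.$

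Here is the one place where the infinite quiver behaves better than the cyclic one: in the path coalgebra $\k\mathcal{A}$ a path is determined uniquely by its source vertex together with its length, with no wrap-around, so $p_{s+l}^{t-l}=p_i^j$ forces $s+l=i$ and $t-l=j$ as honest integer equalities. From $l\ge 0$ this yields $t=j+l\ge j$ and $s+t=i+j$ in $\Z,$ as desired. Finally, for this chosen summand $V(s,t)$ I would define $\phi:V(i,j)\To V(s,t)$ by $\phi(v_k^i)=v_{t-j+k}^{s}$ for $0\le k\le j$ and check directly, using the explicit formula for $\d,$ that $\phi$ is a comodule morphism; injectivity is immediate since $\phi$ sends a basis to part of a basis.

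I do not expect any serious obstacle, as the argument is a routine adaptation of Lemma 3.10. The only point deserving attention is the identification of paths in $\k\mathcal{A},$ which is in fact cleaner than in $\k\mathcal{Z}$ precisely because the congruence modulo $n$ is replaced by an equality in $\Z$; this is exactly why the authors can say the proof is similar and omit it.
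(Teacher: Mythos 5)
Your proposal is correct and is essentially the paper's own argument: the paper omits the proof of this lemma precisely because it is the proof of Lemma 3.10 transplanted to $\md_q$, which is exactly what you carry out, including the comparison of third tensor factors and the explicit inclusion $\phi(v_k^i)=v_{t-j+k}^{s}$. Your added observation that path identification in $\k\mathcal{A}$ gives honest integer equalities $s+l=i$, $t-l=j$ (rather than congruences modulo $n$) is the only point where the two settings differ, and you handle it correctly.
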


\subsection{The case of $q=1$ or $q$ being not a root of unity} In this subsection we compute the Clebsch-Gordan formulae and Green ring of $\md_q$ if $q=1$ or $q$ is not a root of unity.

Similar to Section 3, we have the following
\begin{lemma}
\begin{eqnarray*}
&V(0,0)\otimes V(i,l)=V(i,l)\otimes V(0,0)=V(i,l),\\
&V(1,0)\otimes V(i,l)=V(i,l)\otimes V(1,0)=V(i+1,l),\\
&V(-1,0)\otimes V(i,l)=V(i,l)\otimes V(-1,0)=V(i-1,l),\\
&V(1,0)^{\otimes m}=V(m,0), \quad V(-1,0)^{\otimes m}=V(-m,0),\\
&V(m,0)\otimes V(-m,0)=V(-m,0)\otimes V(m,0)=V(0,0).
\end{eqnarray*}
\end{lemma}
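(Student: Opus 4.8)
The plan is to verify each of the five identities by exhibiting explicit comodule isomorphisms, exactly in the spirit of Lemma 3.2 and Lemma 3.8. For the infinite linear quiver the indecomposables $V(i,l)$ behave like a ``translation-plus-length'' family, and the key point is that tensoring with a one-dimensional object $V(k,0)$ should merely shift the index $i$ to $i+k$ without altering the length $l$. So for the second identity I would define
\begin{equation*}
F : V(1,0)\otimes V(i,l) \To V(i+1,l), \qquad F(v^1_0\otimes v^i_j)=v^{i+1}_j,
\end{equation*}
and check it is a comodule morphism using the explicit comodule structure map for $V(1,0)\otimes V(i,l)$ given just before the lemma. Since $q$ is either $1$ or not a root of unity, the Gaussian binomial coefficients $\binom{f}{x}_q$ do not vanish, but for a one-dimensional factor $V(1,0)$ only the $x=0$ term survives in $(p^f_e)^*(v^1_0\otimes v^i_j)$, so the coaction collapses to a clean shift and the verification is short.

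\textbf{Carrying out the five cases.} First I would handle $V(0,0)\otimes V(i,l)$: here $V(0,0)=\mathbf{1}$ is the neutral object, so the identity is immediate from the unit constraint $\lambda,\rho$, and likewise $V(i,l)\otimes V(0,0)=V(i,l)$. Next the two shift identities ($V(1,0)\otimes V(i,l)$ and $V(-1,0)\otimes V(i,l)$, together with their right-hand versions) follow from the map $F$ above and its obvious analogue with $v^{-1}_0$. The two power identities $V(1,0)^{\otimes m}=V(m,0)$ and $V(-1,0)^{\otimes m}=V(-m,0)$ then follow by an immediate induction on $m$, applying the single-shift identity at each step. Finally $V(m,0)\otimes V(-m,0)=V(0,0)$ is obtained by combining the two power identities with the shift identity: $V(m,0)\otimes V(-m,0)=V(m,0)\otimes V(-1,0)^{\otimes m}$, and applying the $(-1)$-shift $m$ times to $V(m,0)$ returns $V(0,0)$. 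Commutativity in each line is checked by noting that the left and right tensorings give isomorphic shifts.

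\textbf{Where the work concentrates.} The only genuine computation is the single verification that $F$ intertwines the coactions; everything else is formal bookkeeping with the unit object and induction. The main thing to be careful about, rather than hard, is the bookkeeping of the $q$-powers: when one tensors on the \emph{left} by $V(1,0)$ the comodule structure map picks up a factor $q^{(i+s)(y-t)}$, whereas tensoring on the \emph{right} does not, so the correct isomorphism on one side may need the normalization $q^j v^1_0\otimes v^i_j\mapsto v^{i+1}_j$ (compare the two maps $F_1,F_2$ in the proof of Lemma 3.8) to absorb this factor. Thus I expect the only subtlety to be choosing the scalar coefficients in $F$ so that the map is simultaneously well-defined as a comodule morphism on both the left- and right-tensored versions; once these scalars are fixed the remaining identities are purely inductive and require no further calculation.
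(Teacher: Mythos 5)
Your proposal is correct and follows essentially the same route as the paper, which omits the proof of this lemma entirely ("Similar to Section 3") and implicitly refers back to the explicit isomorphisms $F$ of Lemma 3.2 and $F_1, F_2$ of Lemma 3.8. In particular, you correctly identify the one genuine subtlety — that left tensoring by $V(1,0)$ produces the factor $q^{y-t}$ in the coaction, so the left-hand isomorphism needs the normalization $q^j v^1_0 \otimes v^i_j \mapsto v^{i+1}_j$ when $q \neq 1$, while the right-hand map needs no rescaling — which is exactly the content of the paper's $F_1$ versus $F_2$.
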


\begin{lemma}
\begin{equation}
V(0,1)\otimes V(0,l)=V(0,l+1)\oplus V(1,l-1)= V(0,l)\otimes V(0,1).
\end{equation}
\end{lemma}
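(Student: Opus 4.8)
The plan is to mimic the proof of Lemma 3.3 (and its root-of-unity refinement Lemma 3.8), since the statement is the exact linear-quiver analogue of Lemma 3.3. The decisive simplification in the present setting is that $q$ is either $1$ or not a root of unity, so every $q$-integer $k_q=1+q+\cdots+q^{k-1}$ is nonzero and every Gaussian binomial $\binom{a}{b}_q$ that occurs is nonzero; consequently no degeneracy of the kind that forced the case split in the cyclic root-of-unity situation can arise, and a single pair of explicit maps handles both values of $q$ at once.

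First I would exhibit two comodule monomorphisms
$$\phi_1\colon V(0,l+1)\To V(0,1)\otimes V(0,l), \qquad \phi_2\colon V(1,l-1)\To V(0,1)\otimes V(0,l),$$
defined on basis vectors by a $q$-weighted version of the formulas in Lemma 3.3. Concretely, $\phi_1$ would send $v_0^0\mapsto v_0^0\otimes v_0^0$, then $v_k^0\mapsto v_0^0\otimes v_k^0+k_q\,v_1^0\otimes v_{k-1}^0$ for $1\le k\le l$, and $v_{l+1}^0\mapsto (l+1)_q\,v_1^0\otimes v_l^0$, while $\phi_2$ would send $v_j^1$ to a suitable combination of $v_0^0\otimes v_{j+1}^0$ and $v_1^0\otimes v_j^0$. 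The exact coefficients are the $q$-analogues of those in Lemma 3.3 and are pinned down by the requirement that the maps intertwine the coactions.

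The core computation is to verify $\d\circ\phi_i=(\phi_i\otimes\Id)\circ\d$ for $i=1,2$, using the coproduct formula for $V(0,1)\otimes V(0,l)$ recorded in Subsection 4.2 together with the path multiplication rule (4.1). After collecting the coefficient of each basis tensor of the form $v_x^0\otimes v_y^0\otimes p_e^f$, this reduces to Pascal-type recurrences among Gaussian binomials (the same identities invoked implicitly in Lemma 3.3 and Lemma 3.8). It is precisely here, in choosing the $q$-integer weights so that these recurrences close up, that the only genuine work lies, though it is entirely routine.

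With both maps shown to be comodule embeddings, I would finish exactly as in Lemma 3.3: comparing the indices occurring in $\phi_1(V(0,l+1))$ and $\phi_2(V(1,l-1))$ shows the two images meet only in $0$, so $V(0,l+1)\oplus V(1,l-1)$ embeds into $V(0,1)\otimes V(0,l)$; since $\dim\big(V(0,1)\otimes V(0,l)\big)=2(l+1)=(l+2)+l=\dim V(0,l+1)+\dim V(1,l-1)$, the embedding is an equality. The second identity $V(0,l)\otimes V(0,1)=V(0,l+1)\oplus V(1,l-1)$ follows by the same argument with the mirror-image maps $\psi_1,\psi_2$, just as in Lemma 3.3. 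The main obstacle is therefore bookkeeping rather than conceptual: getting the $q$-weights in $\phi_1,\phi_2$ correct so that the coaction-intertwining identities hold uniformly for $q=1$ and for $q$ not a root of unity.
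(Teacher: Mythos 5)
Your proposal is correct and is essentially the paper's own proof: the paper defines exactly your $\phi_1$ (with coefficients $k_q$ and $(l+1)_q$) and a $\phi_2$ sending $v_i^1$ to $v_0^0\otimes v_{i+1}^0$ plus a $q^{-1}(l-i)_{q^{-1}}$-multiple of $v_1^0\otimes v_i^0$, checks that both are comodule monomorphisms, and finishes by a dimension count, the nonvanishing of $q$-integers being the reason no root-of-unity degeneracy occurs. The only cosmetic difference is that the paper deduces the directness of the sum from Lemma 4.2 rather than from your index-and-coefficient comparison of the two images (the route taken in Lemma 3.3).
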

\begin{proof} As before, we only prove the first equality. Consider the maps
\begin{eqnarray*}
\phi_1 :V(0,l+1) & \longrightarrow  & V(0,1)\otimes V(0,l) \\
    v_0^0        & \mapsto          & v_0^0\otimes v_0^0,\\
    v_i^0        & \mapsto          & v_0^0\otimes v_i^0 +(i)_q v_1^0\otimes v_{i-1}^0, \quad i =1,2, \cdots, l,\\
    v_{l+1}^0    & \mapsto          & (l+1)_q v_1^0\otimes v_l^0
\end{eqnarray*}
and
\begin{eqnarray*}
\phi_2 :V(1,l-1) & \longrightarrow  & V(0,1)\otimes V(0,l) \\
    v_i^1        & \mapsto          & v_0^0\otimes v_{i+1}^0 +q^{-1}(l-i)_{q^{-1}} v_1^0\otimes v_{i}^0, \quad i=1,2, \cdots, l-1.
\end{eqnarray*}
It is not difficult to verify that $\phi_1$ and $\phi_2$ are comodule monomorphisms. Then by Lemma 4.2 we have $$ V(0,l+1)\oplus V(1,l-1)\subset V(0,1)\otimes V(0,l).$$ Now the claimed equality follows by comparing the dimensions.
\end{proof}

With a help of Lemma 4.4, one may prove the following identity easily by induction on $l.$
\begin{proposition}
$V(0,l)\otimes V(0,m)= \bigoplus_{i=0}^l V(i,l+m-2i)=V(0,m)\otimes V(0,l).$
\end{proposition}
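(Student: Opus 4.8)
The plan is to follow the proof of Theorem 3.4 almost verbatim, the infinite linear case being if anything simpler. First I would use the second equality in Lemma 4.4, which lets the entire argument be run on whichever factor is decomposed, so that it suffices to treat $l\leq m$ and to recover $V(0,m)\otimes V(0,l)$ by the symmetric computation (decomposing the right factor instead of the left). With $l\leq m$ every index $l+m-2i$ that appears below is nonnegative, so each symbol $V(i,l+m-2i)$ denotes a genuine indecomposable. I would then induct on $l$ with $m$ fixed. The base case $l=0$ is immediate from Lemma 4.3, since $V(0,0)$ is the neutral object, and the case $l=1$ is exactly Lemma 4.4.

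For the inductive step I assume $l\geq 2$ and that the formula holds for all smaller first index. Writing $V(0,l)=V(0,1)\otimes V(0,l-1)-V(1,l-2)$ by Lemma 4.4 in the subtraction form, I would compute
\begin{equation*}
\begin{split}
V(0,l)\otimes V(0,m)
&=\big[V(0,1)\otimes V(0,l-1)-V(1,l-2)\big]\otimes V(0,m)\\
&=V(0,1)\otimes\bigg[\bigoplus_{i=0}^{l-1}V(i,l+m-1-2i)\bigg]-\bigoplus_{i=0}^{l-2}V(1+i,l+m-2-2i)\\
&=\bigoplus_{i=0}^{l}V(i,l+m-2i),
\end{split}
\end{equation*}
applying the inductive hypothesis to $V(0,l-1)\otimes V(0,m)$ and, after writing $V(1,l-2)=V(1,0)\otimes V(0,l-2)$ via Lemma 4.3, to $V(0,l-2)\otimes V(0,m)$. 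The one auxiliary fact this needs is the shifted form of Lemma 4.4,
\[
V(0,1)\otimes V(i,k)=V(i,k+1)\oplus V(i+1,k-1),
\]
which I would obtain by writing $V(i,k)=V(i,0)\otimes V(0,k)$, commuting $V(0,1)$ past $V(i,0)=V(1,0)^{\otimes i}$ (legitimate since $V(1,0)\otimes V(0,1)=V(1,1)=V(0,1)\otimes V(1,0)$ by Lemma 4.3), and then invoking Lemma 4.4.

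The only genuine content is the index bookkeeping in the last equality: reindexing $V(1+i,l+m-2-2i)$ as $V(j,l+m-2j)$ with $j=i+1$ shows that the positive double sum telescopes against the subtracted sum, leaving $\bigoplus_{i=0}^{l-1}V(i,l+m-2i)$ together with the single surviving term $V(l,l+m-2l)$, that is $\bigoplus_{i=0}^{l}V(i,l+m-2i)$. I expect no real obstacle here: unlike the cyclic setting of Theorem 3.4 there is no $\Z_n$ wraparound to force a case split, and unlike Subsection 3.4 no Gaussian binomial can vanish, since $q$ is either $1$ or not a root of unity. As a final consistency check one has $\dim\big(V(0,l)\otimes V(0,m)\big)=(l+1)(m+1)=\sum_{i=0}^{l}(l+m-2i+1)$.
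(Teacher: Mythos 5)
Your proposal is correct and is exactly the route the paper takes: for this proposition the paper merely remarks that it follows ``with a help of Lemma 4.4 \ldots easily by induction on $l$,'' i.e.\ the same induction you carry out, mirroring the proof of Theorem 3.4. You have simply supplied the details the paper leaves implicit (the reduction to $l\leq m$, the Krull--Schmidt subtraction step, and the shifted identity $V(0,1)\otimes V(i,k)=V(i,k+1)\oplus V(i+1,k-1)$ obtained by commuting past $V(1,0)^{\otimes i}$), all of which check out.
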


Combining Lemma 4.2 and Proposition 4.4, we get the Clebsch-Gordan formulae for $\md_q$ as follows.
\begin{corollary}
$V(s,l)\otimes V(t,m)= \bigoplus_{i=0}^l V(s+t+i,l+m-2i)=V(t,m)\otimes V(s,l).$
\end{corollary}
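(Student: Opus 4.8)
The plan is to reduce the desired decomposition to the already established product $V(0,l)\otimes V(0,m)$ of Proposition 4.5, by sliding the one-dimensional shift objects $V(s,0)$ and $V(t,0)$ through the tensor product. The key point is that, by the shift relations of Lemma 4.3, each $V(s,0)$ is an invertible object whose tensor action only translates the vertex index. Iterating the single-step identities $V(1,0)\otimes V(i,l)=V(i,l)\otimes V(1,0)=V(i+1,l)$ (and their $V(-1,0)$-counterparts) gives, for every integer $s$,
\[
V(s,0)\otimes V(i,l)=V(i,l)\otimes V(s,0)=V(s+i,l),\qquad V(s,0)\otimes V(t,0)=V(s+t,0).
\]
In particular $V(s,l)=V(s,0)\otimes V(0,l)$ and $V(t,m)=V(t,0)\otimes V(0,m)$.

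First I would rewrite the left-hand side as
\[
V(s,l)\otimes V(t,m)=\bigl(V(s,0)\otimes V(0,l)\bigr)\otimes\bigl(V(t,0)\otimes V(0,m)\bigr),
\]
and then use associativity of $\otimes$ together with the commuting form of the shift relations to move $V(t,0)$ past $V(0,l)$, obtaining
\[
V(s,0)\otimes V(t,0)\otimes V(0,l)\otimes V(0,m)=V(s+t,0)\otimes\bigl(V(0,l)\otimes V(0,m)\bigr).
\]
Next I would expand the inner product by Proposition 4.5 and, using that $\otimes$ is additive, pull $V(s+t,0)$ into the direct sum with one further application of the shift relation:
\[
V(s+t,0)\otimes\bigoplus_{i=0}^{l}V(i,l+m-2i)=\bigoplus_{i=0}^{l}V(s+t+i,l+m-2i),
\]
which is exactly the asserted formula.

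For the commutativity $V(s,l)\otimes V(t,m)=V(t,m)\otimes V(s,l)$ I would run the same computation with $(s,l)$ and $(t,m)$ interchanged; since $V(s+t,0)=V(t+s,0)$ and the base-case decomposition in Proposition 4.5 is itself symmetric, the two resulting direct sums agree. I do not expect any genuine obstacle here: the whole argument is bookkeeping once the shift relations are in hand. The only point that deserves care is justifying that the grouplike objects $V(s,0)$ really slide past the remaining factors and commute with the direct-sum decomposition; this is precisely what the multiplicative and commuting relations of Lemma 4.3 (extended from shifts by $\pm1$ to arbitrary shifts by iteration) together with the biexactness of $\otimes$ guarantee, so no input beyond Lemma 4.3 and Proposition 4.5 is required.
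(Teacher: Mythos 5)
Your proposal is correct and takes essentially the same route as the paper: the corollary is stated there as an immediate consequence of combining the shift relations of Lemma 4.3 (iterated to arbitrary shifts, so that $V(s,l)=V(s,0)\otimes V(0,l)$ and the invertible objects $V(s,0)$ slide through tensor products and direct sums) with the base decomposition of Proposition 4.5. The paper leaves this bookkeeping implicit, whereas you spell it out, but no new idea is involved on either side.
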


Now we are ready to give the Green ring structure of $\md_q$ when $q=1$ or $q$ is not a root of unit. As before, we need some facts about polynomials. Let $f_k(x,y)$ be the generalized Fibonacci polynomial as defined in Definition 3.5. The we have the following
\begin{lemma}
$\{x^if_k(x,y)\}_{i\in \Z, \ k \ge 0}$ is a basis of $\Z[x,x^{-1},y]$.
\end{lemma}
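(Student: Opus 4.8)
The plan is to reduce the statement to a two-step base-change argument, exactly parallel to Lemma 3.6 but with $x$ now invertible. Write $R = \Z[x,x^{-1}]$ and regard $\Z[x,x^{-1},y] = R[y]$ as a polynomial ring in the single variable $y$ over the commutative base ring $R$. Under this identification the claim splits into two independent facts: first, that $\{f_k(x,y)\}_{k \ge 0}$ is an $R$-basis of $R[y]$; and second, that $\{x^i\}_{i \in \Z}$ is a $\Z$-basis of $R$. Granting both, every element of $R[y]$ is a unique $R$-combination of the $f_k$, and expanding each coefficient uniquely as a $\Z$-combination of the $x^i$ produces a unique expression as a $\Z$-combination of the $x^i f_k(x,y)$, which is precisely the assertion.

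First I would record the structural fact that, viewed as a polynomial in $y$ with coefficients in $\Z[x] \subseteq R$, the polynomial $f_k(x,y)$ is monic of degree $k$. This is immediate from the explicit expansion in Definition 3.5 (the $i=0$ summand contributes $y^k$, while every other summand carries a strictly positive power of $x$ and a strictly smaller $y$-degree), or directly by induction from the recursion $f_k = y f_{k-1} - x f_{k-2}$. Consequently, for each $N$ the families $\{f_0, f_1, \dots, f_N\}$ and $\{1, y, \dots, y^N\}$ span the same $R$-submodule, namely the polynomials of $y$-degree at most $N$, and the transition matrix between them is unitriangular with all diagonal entries equal to $1$, hence invertible over $R$. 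Therefore $\{f_k\}_{k \ge 0}$ is an $R$-basis of $R[y]$.

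The second fact, that $\{x^i\}_{i \in \Z}$ is a $\Z$-basis of the Laurent ring $R = \Z[x,x^{-1}]$, is standard, and combining the two bases as described yields the result. Alternatively one may argue spanning and independence directly: for spanning, an induction on $k$ using $y f_l = f_{l+1} + x f_{l-1}$ shows that every $y^k$ lies in the $\Z$-span of $\{x^j f_l : j \ge 0,\ l \le k\}$, whence every $x^i y^k$ with $i \in \Z$ lies in the $\Z$-span of the $x^{i'} f_l$ because $x$ is a unit; for independence, in any finite relation $\sum c_{i,k} x^i f_k = 0$ the coefficient of the top power $y^K$ is the Laurent polynomial $\sum_i c_{i,K} x^i$, which must vanish, forcing $c_{i,K} = 0$, and one then descends on $K$.

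I expect no serious obstacle here: the only genuine novelty over Lemma 3.6 is that $i$ now ranges over all of $\Z$, and this is absorbed painlessly by the invertibility of $x$, which is exactly what changes the base ring from $\Z[x]$ to the Laurent ring $\Z[x,x^{-1}]$. The one bookkeeping point to keep in mind is that, in the spanning step, negative powers of $x$ are introduced only at the very end, after $y^k$ has already been expressed using nonnegative powers of $x$.
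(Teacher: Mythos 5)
Your proof is correct, and it is essentially the argument the paper has in mind: the paper's own ``proof'' of this lemma just says ``Similar to Lemma 3.6,'' whose proof is in turn omitted as easy, and the natural way to fill both in is exactly your observation that $f_k(x,y)$ is monic of degree $k$ in $y$ (so the transition matrix to $\{y^k\}$ is unitriangular over the base ring), combined with $\{x^i\}_{i\in\Z}$ being a $\Z$-basis of $\Z[x,x^{-1}]$. Both your base-change formulation and your direct spanning/independence argument are sound, so no gap remains.
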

\begin{proof} Similar to Lemma 3.6. \end{proof}

\begin{theorem}
$\R(\md_q)$ is isomorphic to $\Z[x,x^{-1},y]$ when $q=1$ or $q$ is not a root of unit.
\end{theorem}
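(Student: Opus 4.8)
The plan is to imitate the proof of Theorem 3.7, the one structural change being that the infinite cyclic group $G=\langle g\rangle$ contributes no torsion relation, so that $[V(1,0)]$ becomes an honest invertible element rather than an $n$-th root of unity. First I would define a ring homomorphism
\begin{eqnarray*}
\Phi:\Z[x,x^{-1},y] & \longrightarrow & \R(\md_q)\\
x & \mapsto & [V(1,0)],\\
x^{-1} & \mapsto & [V(-1,0)],\\
y & \mapsto & [V(0,1)].
\end{eqnarray*}
This is well defined precisely because $V(1,0)\otimes V(-1,0)=V(0,0)=\1$ by Lemma 4.3, so the images of $x$ and $x^{-1}$ are mutually inverse; no further relation is imposed, which is exactly what distinguishes this case from the cyclic one.

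Next I would establish the key identity $\Phi(x^if_k(x,y))=[V(i,k)]$ for all $i\in\Z$ and $k\geq 0$, where $f_k(x,y)$ is the generalized Fibonacci polynomial of Definition 3.5. Running induction on $k$ via the recursion $f_k=yf_{k-1}-xf_{k-2}$, the inductive step reduces to the Green-ring identity $[V(0,1)][V(0,k-1)]-[V(1,0)][V(0,k-2)]=[V(0,k)]$, which follows at once by subtracting $V(1,0)\otimes V(0,k-2)=V(1,k-2)$ (Lemma 4.3) from the decomposition $V(0,1)\otimes V(0,k-1)=V(0,k)\oplus V(1,k-2)$ (Lemma 4.4). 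Thus $\Phi(f_k(x,y))=[V(0,k)]$. Multiplying by $\Phi(x^i)=[V(1,0)]^i=[V(i,0)]$, valid for both signs of $i$ by the identities $V(1,0)^{\otimes m}=V(m,0)$ and $V(-1,0)^{\otimes m}=V(-m,0)$ of Lemma 4.3, and then invoking $V(i,0)\otimes V(0,k)=V(i,k)$ (again Lemma 4.3), yields $\Phi(x^if_k(x,y))=[V(i,k)]$. Since the classes $[V(i,k)]$ exhaust the indecomposables, this already shows that $\Phi$ is surjective.

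Finally I would upgrade surjectivity to an isomorphism. By Lemma 4.7 the set $\{x^if_k(x,y)\mid i\in\Z,\ k\geq 0\}$ is a $\Z$-basis of $\Z[x,x^{-1},y]$, while $\{[V(i,k)]\mid i\in\Z,\ k\geq 0\}$ is a $\Z$-basis of $\R(\md_q)$ because these classes are precisely the finite dimensional indecomposable $\k\mathcal{A}(q)$-comodules. The identity of the previous paragraph shows that $\Phi$ carries the first basis bijectively onto the second, so $\Phi$ is an isomorphism of free abelian groups, hence a ring isomorphism.

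I do not anticipate a serious obstacle, since every ingredient is already in place. The only point requiring genuine care is the very first one: checking that $\Phi$ descends from the Laurent polynomial ring, i.e.\ that the formal invertibility of $x$ is faithfully reflected by the invertibility of $[V(1,0)]$ in $\R(\md_q)$, which is guaranteed by Lemma 4.3. Compared with Theorem 3.7, the simplification is that the basis $\{x^if_k\}$ is indexed by all of $\Z$ rather than by $\Z_n$, so no quotient by $\langle x^n-1\rangle$ is needed and the Green ring is simply the full Laurent ring $\Z[x,x^{-1},y]$.
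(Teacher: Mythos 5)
Your proposal is correct and follows essentially the same route as the paper: the same homomorphism $\Phi$ on $\Z[x,x^{-1},y]$, the same key identity $\Phi(x^if_k(x,y))=[V(i,k)]$ derived from Lemmas 4.3--4.4, and the same conclusion by matching the basis of Lemma 4.7 with the basis of indecomposables $\{[V(i,k)]\}$. The only difference is that you spell out the inductive verification that the paper leaves as ``not hard to verify,'' which is a welcome addition rather than a deviation.
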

\begin{proof}
Define a ring  morphism
\begin{eqnarray*}
\Phi :\Z[x,x^{-1},y] & \longrightarrow  & \R(\md_q) \\
    x        & \mapsto          & [V(1,0)],\\
    x^{-1}        & \mapsto          & [V(-1,0)],\\
    y    & \mapsto          & [V(0,1)].
\end{eqnarray*}
By the definition of $f_k(x,y)$ and (4.2), it is not hard to verify that $$\Phi(x^if_k(x,y))=[V(i,k)].$$ This says that $\Phi$ maps the basis $\{x^if_k{x,y}\}_{i\in \Z, \ k \ge 0}$ of $\Z[x,x^{-1},y]$ to the basis $\{[V(i,k)]\}_{i\in \Z, \ k \ge 0}$ of $\R(\md_q),$ thus it is an isomorphism.
\end{proof}

\subsection{The case of $q$ being a root of unity} In this subsection $q$ is assumed to be a root of unity with multiplicative order $d \ge 2.$ First we list some facts without proofs, as they can be given in a similar way as before.

\begin{lemma}
\begin{eqnarray*}
&V(0,0)\otimes V(i,l)=V(i,l)\otimes V(0,0)=V(i,l),\\
&V(1,0)\otimes V(i,l)=V(i,l)\otimes V(1,0)=V(i+1,l),\\
&V(-1,0)\otimes V(i,l)=V(i,l)\otimes V(-1,0)=V(i-1,l).
\end{eqnarray*}
\end{lemma}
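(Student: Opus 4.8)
The plan is to establish each isomorphism by writing down an explicit linear map on basis vectors and checking that it intertwines the comodule structure maps, exactly as in the proof of Lemma 3.9. The first line is essentially free: the comodule $V(0,0)$ is spanned by $v_0^0$ with $\d(v_0^0)=v_0^0\otimes p_0^0=v_0^0\otimes 1$, so it is the neutral object $\1$ of $\md_q$, and $V(0,0)\otimes V(i,l)\cong V(i,l)\cong V(i,l)\otimes V(0,0)$ is just the unit constraint; concretely $v_0^0\otimes v_t^i\mapsto v_t^i$ and $v_t^i\otimes v_0^0\mapsto v_t^i$ are the desired isomorphisms.

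For the second line I would feed the relevant factors into the tensor comodule formula recorded in Subsection 4.2. Since $V(1,0)$ is spanned by $v_0^1$ with $\d(v_0^1)=v_0^1\otimes g$, putting $V(1,0)$ in the left slot forces $s=0$ and $x=0$, whence $\d(v_0^1\otimes v_t^i)=\sum_{y\geq t}q^{\,y-t}\,v_0^1\otimes v_y^i\otimes p_{1+i+t}^{\,y-t}$, while $\d(v_t^{i+1})=\sum_{y\geq t}v_y^{i+1}\otimes p_{1+i+t}^{\,y-t}$. Comparing the two shows that $F_1\colon V(1,0)\otimes V(i,l)\To V(i+1,l)$, $F_1(q^t v_0^1\otimes v_t^i)=v_t^{i+1}$, is a comodule isomorphism. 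Tensoring on the right instead is the naive shift: taking $t=0$ (so $y=0$) kills the scalar and gives $\d(v_s^i\otimes v_0^1)=\sum_{x\geq s}v_x^i\otimes v_0^1\otimes p_{i+s+1}^{\,x-s}$, which matches $\d(v_s^{i+1})$ term by term, so $F_2(v_s^i\otimes v_0^1)=v_s^{i+1}$ already intertwines $\d$ with no twist.

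The third line is entirely analogous, with the shift $i\mapsto i-1$ and the opposite sign appearing in the exponent of $q$: I would check that $v_0^{-1}\otimes v_t^i\mapsto q^{t}v_t^{i-1}$ and $v_s^i\otimes v_0^{-1}\mapsto v_s^{i-1}$ define comodule isomorphisms realizing $V(-1,0)\otimes V(i,l)\cong V(i-1,l)\cong V(i,l)\otimes V(-1,0)$. There is no genuinely hard step; the only point requiring care is the bookkeeping of the powers of $q$, which enter asymmetrically because the factor $q^{(i+s)(y-t)}$ in the tensor comodule structure vanishes whenever $V(\pm 1,0)$ occupies the \emph{right} slot (then $y=t$) but equals $q^{\pm(y-t)}$ when it occupies the \emph{left} slot. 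This is exactly what forces the twisting scalars $q^{-t}$ and $q^{t}$ in the left-hand isomorphisms, while the right-hand ones require no correction.
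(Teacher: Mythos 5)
Your proposal is correct and follows essentially the same route the paper intends: the paper omits the proof of this lemma, pointing back to Lemma 3.9, whose maps $F_1(q^t v_0^1\otimes v_t^i)=v_t^{i+1}$ and $F_2(v_t^i\otimes v_0^1)=v_t^{i+1}$ are exactly your maps for the second line, and your $V(-1,0)$ case with the sign-flipped twist $v_0^{-1}\otimes v_t^i\mapsto q^t v_t^{i-1}$ is the natural extension the paper leaves implicit. Your accounting of where the scalar $q^{(i+s)(y-t)}$ is trivial (right slot) versus equal to $q^{\pm(y-t)}$ (left slot) is the correct explanation of the asymmetric twists, though note the factor becomes $1$ there rather than ``vanishes.''
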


\begin{lemma}
\begin{equation*}
\begin{split}
&V(0,1)\otimes V(0,md+l)=V(0,md+l)\otimes V(0,1)\\
=&\begin{cases} V(0,md+l+1)\oplus V(1,md+l-1), & 0\leq l\leq d-2; \\ V(0,(m+1)d-1)\oplus V(1,(m+1)d-1),  &
l=d-1.
\end{cases}
\end{split}
\end{equation*}
\end{lemma}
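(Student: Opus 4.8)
The plan is to follow the strategy of Lemma 3.8, of which this is the exact analogue for the infinite linear quiver: I will exhibit explicit comodule monomorphisms from each of the two claimed indecomposable summands into $V(0,1)\otimes V(0,md+l)$, check that their images meet only in zero, and conclude by comparing dimensions. Since the comodule structure map on a tensor product recorded in Subsection 4.2 is formally identical to formula (3.4) of the cyclic case, all computations carry over once the indices are read in $\Z$ rather than in $\Z_n$; Lemma 4.2 is available should I need to locate the summands abstractly, although here the explicit maps make it unnecessary. I will prove only the first equality, the identity with $V(0,md+l)\otimes V(0,1)$ being entirely symmetric.

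For the case $0\le l\le d-2$ I would reuse the maps from Lemma 4.4, namely $\phi_1:V(0,md+l+1)\to V(0,1)\otimes V(0,md+l)$ sending $v_k^0\mapsto v_0^0\otimes v_k^0+(k)_q v_1^0\otimes v_{k-1}^0$ for $1\le k\le md+l$ and $v_{md+l+1}^0\mapsto (md+l+1)_q v_1^0\otimes v_{md+l}^0$, together with the matching $\phi_2:V(1,md+l-1)\to V(0,1)\otimes V(0,md+l)$. The only new point to check is that the leading coefficient $(md+l+1)_q$ is nonzero: since $(k)_q$ vanishes exactly when $d\mid k$ and here $1\le l+1\le d-1$, we have $d\nmid(md+l+1)$, so the map is genuine. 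The comodule-map verification is then identical to that of Lemma 4.4 and I would omit it.

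The case $l=d-1$ is where the construction must change, and this is the real content of the lemma. Writing $N=(m+1)d-1$, the naive top coefficient would be $(N+1)_q=((m+1)d)_q$, which equals $0$ because $d$ divides $(m+1)d$; hence the Lemma 4.4 template collapses and the first summand drops a dimension. I would instead set
\begin{equation*}
\phi_1:V(0,N)\to V(0,1)\otimes V(0,N),\qquad v_k^0\mapsto v_0^0\otimes v_k^0+(k)_q v_1^0\otimes v_{k-1}^0,
\end{equation*}
\begin{equation*}
\phi_2:V(1,N)\to V(0,1)\otimes V(0,N),\qquad v_k^1\mapsto q^k v_1^0\otimes v_k^0,
\end{equation*}
for $0\le k\le N$. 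The map $\phi_2$ is at once a comodule morphism: because the first tensor factor is two-dimensional, $\d(v_1^0\otimes v_k^0)=\sum_{y=k}^{N}q^{y-k}v_1^0\otimes v_y^0\otimes p_{k+1}^{y-k}$ carries no $v_0^0$-component, and multiplying through by $q^k$ reproduces $(\phi_2\otimes\Id)\d(v_k^1)$ on the nose. For $\phi_1$ the verification reduces, after collecting the $v_1^0$-terms, to the $q$-integer identity $(a)_q+q^a(b)_q=(a+b)_q$ with $a=y-k+1$ and $b=k$, which produces coefficient $(y+1)_q$ on $v_1^0\otimes v_y^0\otimes p_k^{y-k+1}$; the single spurious term sits at $y=N$ with coefficient $(N+1)_q=((m+1)d)_q=0$ and therefore disappears. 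This vanishing is precisely what forces $\phi_1$ to land in the submodule, and it is the main obstacle to keep an eye on.

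Finally, the images are independent: $\phi_2(V(1,N))$ lies in the span of the vectors $v_1^0\otimes v_k^0$, whereas each $\phi_1(v_k^0)$ carries a nonzero $v_0^0\otimes v_k^0$ term and these are linearly independent for distinct $k$, so $\phi_1(V(0,N))\cap\phi_2(V(1,N))=\{0\}$. Counting dimensions, $\dim\big(V(0,N)\oplus V(1,N)\big)=2(m+1)d=\dim\big(V(0,1)\otimes V(0,md+d-1)\big)$, whence the inclusion is an equality and the stated decomposition follows.
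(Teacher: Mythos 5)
Your proposal is correct and follows essentially the same route as the paper: the paper states this lemma without proof as an analogue of Lemma 3.8, and your maps $\phi_1(v_k^0)=v_0^0\otimes v_k^0+(k)_q\,v_1^0\otimes v_{k-1}^0$ and $\phi_2(v_k^1)=q^k\,v_1^0\otimes v_k^0$ for $l=d-1$, together with the trivial-intersection and dimension-count conclusion, are exactly the paper's argument transplanted from the cyclic quiver to $\Z$. Your added details --- the $q$-integer identity $(a)_q+q^a(b)_q=(a+b)_q$, the vanishing of the spurious coefficient $((m+1)d)_q$, and the observation that $(md+l+1)_q\neq 0$ when $0\le l\le d-2$ --- are accurate and merely make explicit what the paper leaves implicit.
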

\begin{lemma} For all $m\geq 1$ one has
\begin{equation*}
\begin{split}
&V(0,d)\otimes V(0,md)=V(0,md)\otimes V(0,d)\\
=&V(0,(m+1)d)\oplus V(1,(m+1)d-2)\oplus \bigoplus_{i=2}^{d-1}V(i,md-1)\oplus V(d,(m-1)d).
\end{split}
\end{equation*}
\end{lemma}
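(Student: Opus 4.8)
The plan is to imitate the proof of Lemma 3.12, which is the exact counterpart of this statement for the cyclic quiver; the infinite-linear setting is in fact cleaner, since indices now live in $\Z$ rather than $\Z_n$ and the dual-module action formula for $(\k\mathcal{A}(q))^*$ recorded in Subsection 4.2 carries no Kronecker-delta constraint. As usual it suffices to establish the first equality, the second being obtained by the symmetric construction on $V(0,md)\otimes V(0,d)$. The strategy is: for each indecomposable summand $V(s,t)$ on the right-hand side, exhibit an explicit vector $w\in V(0,d)\otimes V(0,md)$ sitting at vertex $s$ whose orbit $\{(p_s^k)^*(w)\}_{0\le k\le t}$ spans a subcomodule isomorphic to $V(s,t)$; then invoke the inclusion Lemma 4.3 to see that these subcomodules lie in pairwise distinct indecomposable summands; and finish by a dimension count, since $\dim\big(V(0,d)\otimes V(0,md)\big)=(d+1)(md+1)$ agrees with the total dimension of the proposed decomposition.

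The concrete choices of generating vectors are dictated by Lemma 3.12. For $V(0,(m+1)d)$ take $w=v_0^0\otimes v_0^0$: the action formula collapses to a single surviving term $x=d$, giving $(p_0^{(m+1)d})^*(v_0^0\otimes v_0^0)=\binom{(m+1)d}{d}_q\,v_d^0\otimes v_{md}^0$ with $\binom{(m+1)d}{d}_q\ne0$ by the criterion (3.1) (here $[(m+1)d/d]-[md/d]-[d/d]=0$), while $(p_0^{(m+1)d+1})^*$ annihilates it for degree reasons. For $V(1,(m+1)d-2)$ take $w=v_1^0\otimes v_0^0-\frac{1}{m}v_0^0\otimes v_1^0$; for the middle summands $V(i,md-1)$ with $2\le i\le d-1$ take $w=v_i^0\otimes v_0^0$ (or a vector $v_a^0\otimes v_b^0$ with $a+b=i$), mirroring the preceding-lemma argument; and for $V(d,(m-1)d)$ take $w=\sum_{i=0}^d\alpha_i\,v_i^0\otimes v_{d-i}^0-(m-1)v_d^0\otimes v_0^0$ with $\alpha_i=(-1)^iq^{-\frac{(1+i)i}{2}}$. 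In each case one verifies that the top raising operator kills $w$, forcing the predicted length, while the next one does not.

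To assemble the pieces I would record the weights $s+t$ of the listed summands: $(m+1)d$ for $V(0,(m+1)d)$, then $md+d-1$ for $V(1,(m+1)d-2)$, then the values $md+1,\dots,md+d-2$ for the summands $V(i,md-1)$, and finally $md$ for $V(d,(m-1)d)$. These exhaust the consecutive integers from $md$ to $(m+1)d$ with no repetition, so by Lemma 4.3 (whose constraint is precisely $i+j=s+t$ together with $t\ge j$) each exhibited subcomodule is forced into a different indecomposable summand of $V(0,d)\otimes V(0,md)$. Hence the right-hand side embeds into the left, and the equality of total dimensions upgrades the inclusion to an isomorphism.

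The main obstacle is purely computational and isolated in the last summand: verifying the $q$-identity $(p_{md}^1)^*\big(\sum_{i=0}^d\alpha_i v_i^0\otimes v_{md-i}^0\big)=0$, which is what certifies that the $\alpha_i$-combination generates a copy of $V(d,(m-1)d)$ of the correct length rather than a longer module. This is the same Gaussian-binomial cancellation that appears in Lemma 3.12, and I expect it to transfer essentially verbatim once the action of $(\k\mathcal{A}(q))^*$ is substituted for that of $\gr(\k\mathcal{Z}(q))^*$; the remaining verifications for the other summands are routine telescoping computations with the $q$-binomial action formula.
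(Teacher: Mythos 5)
Your overall strategy is exactly the intended one --- the paper gives no separate proof of this lemma (Subsection 4.4 opens with ``we list some facts without proofs, as they can be given in a similar way as before''), so mirroring Lemma 3.12 is precisely what is expected --- and your weight bookkeeping, the appeal to Lemma 4.3, and the dimension count $(d+1)(md+1)$ are all correct. However, your witness vectors for the middle summands $V(i,md-1)$ are wrong, and this is exactly the delicate point of Lemma 3.12. For $v_a^0\otimes v_b^0\in V(0,d)\otimes V(0,md)$ one has
\begin{equation*}
(p_{a+b}^{md})^*(v_a^0\otimes v_b^0)=\sum_{x+y=md}q^{a(md-x)}\binom{md}{x}_q\,v_{a+x}^0\otimes v_{b+y}^0,
\end{equation*}
where terms with $a+x>d$ or $b+y>md$ vanish, and $\binom{md}{x}_q\neq 0$ iff $d\mid x$ by criterion (3.1). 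Taking your primary choice $w=v_i^0\otimes v_0^0$ (so $a=i>0$, $b=0$), the term $x=0$, $y=md$ survives: $(p_i^{md})^*(v_i^0\otimes v_0^0)=v_i^0\otimes v_{md}^0\neq 0$, so the ``top raising operator'' does \emph{not} kill $w$ and the predicted length $md-1$ is false. Iterating, $w$ generates a copy of $V(i,(m+1)d-i)$, which by Lemma 4.3 sits inside the summand $V(0,(m+1)d)$ you have already exhibited; so these vectors produce no new summands, the exhibited subcomodules are not in pairwise distinct summands, and the dimension count cannot close. The alternative edge choice $v_0^0\otimes v_i^0$ fails for the same reason: with $a=0$ the value $x=d$ is admissible and $\binom{md}{d}_q=m\neq 0$, so again $(p_i^{md})^*$ does not vanish on it.

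The correct choice, written explicitly into the paper's proof of Lemma 3.12, is $w=v_a^0\otimes v_b^0$ with \emph{both} $a>0$ and $b>0$, $a+b=i$ (e.g.\ $w=v_1^0\otimes v_{i-1}^0$, possible since $i\geq 2$). Then $a>0$ forces $x=0$ in the display above, whence $y=md$ and the term dies because $b>0$; so $(p_i^{md})^*w=0$, while $(p_i^{md-1})^*w\neq 0$ by the telescoping computation of Lemma 3.11, giving a genuine copy of $V(i,md-1)$. Note that your appeal to ``mirroring the preceding-lemma argument'' actually inverts it: in Lemma 3.11 the vectors whose second index is $0$ are exactly the generators of the \emph{long} summand $V(0,md+l)$, and the extra subtlety in Lemma 3.12 is that, since the first factor is now $V(0,d)$ rather than $V(0,l)$ with $l<d$, one must also exclude the first index being $0$. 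With this correction, the rest of your proposal (the vectors for $V(0,(m+1)d)$, $V(1,(m+1)d-2)$ and $V(d,(m-1)d)$, the distinct-weights argument via Lemma 4.3, and the dimension comparison) transfers correctly to $\md_q$.
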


Obviously, Lemmas 4.9-4.11 imply that the Green ring $\R(\md_q)$ is a commutative ring and is generated by $[V(-1,0)], \ [V(1,0)], \ [V(0,1)]$ and $[V(0,d)].$ The preceding three Lemmas also help to compute the decomposition of $V(0,ed+f)\otimes V(0,md+h)$ for all $e, m\geq 0$ and $0\leq f,h\leq d-1.$ In the following let $\gamma=f+h-d+1.$

\begin{theorem}
\begin{itemize}
\item[(1)] If $0\leq f\leq h$ and $f+h<d-1,$ then
\begin{equation*}
\begin{split}
&V(0,ed+f)\otimes V(0,md+h)=\bigoplus_{k=0}^{e-1}\bigg[\bigoplus_{i=0}^{f}V(kd +i,(e+m-2k)d+f+h-2i) \\
&\quad \oplus \bigoplus_{j=f+1}^{h}V(kd+j,(e+m-2k)d-1) \oplus   \bigoplus_{r=h+1}^{f+h+1}V(kd+r,(e+m-2k)d+h+f-2r) \\  &\quad \oplus \bigoplus_{s=f+h+2}^{d-1}V(kd+s,(e+m-1-2k)d-1)\bigg] \oplus \bigoplus_{i=0}^{f}V(ed+i,(m-e)d+h+f-2i);
\end{split}
\end{equation*}
\item[(2)]if $0<f\leq h<d$ and $f+h\geq d,$ then
\begin{equation*}
\begin{split}
&V(0,ed+f)\otimes V(0,md+h)=\bigoplus_{k=0}^{e-1}\bigg[\bigoplus_{i=0}^{\gamma}V(kd+i,(e+m-2k)d+d-1) \\
&\quad \oplus \bigoplus_{j=\gamma+1}^{f}V(kd+j,(e+m-2k)d+f+h-2i) \oplus   \bigoplus_{r=f+1}^{h}V(kd+r,(e+m-2k)d-1)\\  &\quad \oplus \bigoplus_{s=h+1}^{d-1}V(kd+s,(e+m-2k)d+h+f-2s)\bigg] \oplus \bigoplus_{i=0}^{\gamma}V(ed+i,(m-e)d+d-1) \\
&\quad \oplus \bigoplus_{i=\gamma+1}^{f}V(ed+i,(m-e)d+f+h-2i);
\end{split}
\end{equation*}
\item[(3)]if $0\leq h<f<d$ and $f+h < d-1,$ then
\begin{equation*}
\begin{split}
&V(0,ed+f)\otimes V(0,md+h) \\
=& \bigoplus_{k=0}^{e-1}\bigg[\bigoplus_{i=0}^{h}V(kd+i,(e+m-2k)d+h+f-2i)\oplus\bigoplus_{j=h+1}^{f}V(kd+j,(e+m-2k)d-1)\\
&\oplus\bigoplus_{r=f+1}^{h+f+1}V(kd+r,(e+m-2k)d+h+f-2r)\oplus \bigoplus_{s=f+h+2}^{d-1}V(kd+s,(e+m-1-2k)d-1)\bigg] \\
&\oplus \bigoplus_{i=0}^{h}V(ed+i,(m-e)d+h+f-2i)\oplus \bigoplus_{i=h+1}^{f}V(ed+i,(m-e)d-1);
\end{split}
\end{equation*}
\item[(4)]if $0<h<f<d$ and $f+h\geq d,$ then
\begin{equation*}
\begin{split}
&V(0,ed+f)\otimes V(0,md+h)=\bigoplus_{k=0}^{e-1}\bigg[\bigoplus_{i=0}^{\gamma}V(kd+i,(e+m-2k)d+d-1) \\
&\quad \oplus \bigoplus_{j=\gamma+1}^{h}V(kd+j,(e+m-2k)d+f+h-2j) \oplus \bigoplus_{r=h+1}^{f}V(kd+r,(e+m-2k)d-1) \\
&\quad \oplus \bigoplus_{s=f+1}^{d-1}V(kd+s,(e+m-2k)d+h+f-2s)\bigg] \oplus \bigoplus_{i=0}^{\gamma}V(ed+i,(m-e)d+d-1) \\
&\quad \oplus \bigoplus_{i=\gamma+1}^{h}V(ed+i,(m-e)d+f+h-2i) \oplus  \bigoplus_{i=h+1}^{f}V(ed+i,(m-e)d-1).
\end{split}
\end{equation*}
\end{itemize}
\end{theorem}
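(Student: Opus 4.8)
The plan is to observe that the asserted formulae (1)--(4) are \emph{formally identical} to those of Theorem 3.18, the sole difference being that the vertex indices now run over $\Z$ rather than over $\Z_n$. Consequently the entire computational machinery developed for $\C_q$ carries over to $\md_q$, and I would transcribe the proof of Theorem 3.18 into the present setting, checking at each step that nothing used the cyclic identification of indices in an essential way.

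Concretely, I would first record the intermediate decompositions that serve as the base of the induction, namely the analogues for $\md_q$ of Lemmas 3.15, 3.16 and 3.17: the splitting of $V(0,l)\otimes V(0,md)$ for $1\leq l\leq d-1$, of $V(0,l)\otimes V(0,md+h)$ for $0\leq l\leq d-1$, and of $V(0,d)\otimes V(0,md+h)$. Each is proved by the argument already used in Section 3: one writes down candidate lowest vectors of the indecomposable summands inside $V(0,\alpha)\otimes V(0,\beta)$ using the $(\k\mathcal{A}(q))^*$-module structure map displayed in Subsection 4.2, one determines exactly when their images survive via the Gaussian-binomial vanishing criterion (3.1), one invokes the inclusion Lemma 4.2 to see that the indecomposables so produced must sit in pairwise distinct summands of $V(0,\alpha)\otimes V(0,\beta)$, giving a direct-sum inclusion, and one then upgrades this to equality by a dimension count. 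These base cases are assembled from Lemmas 4.10 and 4.11 exactly as Lemmas 3.16 and 3.17 were assembled from Lemmas 3.8, 3.9, 3.11 and 3.12.

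With the base cases in place I would run the double induction of Theorem 3.18 verbatim. The inductive step replaces $V(0,\alpha)$ by $V(0,1)\otimes V(0,\alpha-1)-V(1,\alpha-2)$ (Lemma 4.10) to advance $\alpha$ by one inside a block, and uses the recursion coming from $V(0,d)\otimes V(0,(e-1)d)$ of Lemma 4.11 to cross each boundary $\alpha=ed$; every term of the form $V(1,\ast)\otimes V(0,\beta)$ or $V(k,\ast)$ that arises is rewritten as a shift of $V(0,\ast)\otimes V(0,\beta)$ by means of Lemma 4.9. Tensoring the inductive hypothesis for $V(0,\alpha-1)\otimes V(0,\beta)$ by $V(0,1)$ (respectively $V(0,d)$), subtracting the correction term, and reindexing the resulting direct sums according to the sign of $\gamma=f+h-d+1$ and the relative order of $f$ and $h$ reproduces the four cases (1)--(4).

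I expect the difficulty here to be organizational rather than conceptual. In the cyclic situation one must constantly guard against two summands $V(i,j)$ and $V(i',j')$ with $i\equiv i'\pmod n$ coinciding and needing to be merged; over $\Z$ there is no wraparound, so distinct labels name distinct indecomposables and the relation $i+j=s+t$ of Lemma 4.2 is an honest equality of integers. Thus the bookkeeping is strictly simpler than in Theorem 3.18, and the only real labour is the faithful transcription and reindexing of the long sums in the inductive step; no new phenomenon appears.
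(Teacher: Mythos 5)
Your proposal coincides with the paper's own proof, which is stated in one line as ``Similar to Theorem 3.18'': the intended argument is exactly your transcription of the cyclic-quiver machinery to $\md_q$, with the base decompositions (the analogues of Lemmas 3.11, 3.16 and 3.17, built as before from the module structure over the dual algebra, the vanishing criterion (3.1), the inclusion Lemma 4.2 in place of Lemma 3.10, and dimension counts) feeding the same double induction driven by Lemmas 4.10 and 4.11. Your further remark that over $\Z$ there is no wraparound of vertex labels, so the bookkeeping is strictly simpler than in Theorem 3.18, is correct and is precisely why the paper felt entitled to omit the details.
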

\begin{proof} Similar to Theorem 3.18. \end{proof}

Let $\{f_k(x,y,z)\}_{k \ge 0}$ be the polynomials as defined in Definition 3.19 and we have the following easy fact.
\begin{lemma}
$\big\{x^if_k(x,y,z)|i\in \Z, \ k \ge 0 \big\} \bigcup \big\{x^j(y^l-\sum_{s=0}^l\binom{l}{s}x^sf_{md-1}(x,y,z)|j\in \Z, \ l,m \ge 1 \big\}$ is a basis of $\Z[x,x^{-1},y,z].$
\end{lemma}

Now we are in the position to determine the Green ring $\R(\md_q).$
\begin{theorem}
$\R(\md_q)$ is isomorphic to $\Z[x,x^{-1},y,z]/J$ where $J$ is the ideal of $\Z[x,x^{-1},y,z]$ generated by $(y-x-1)\bigg[\sum_{i=0}^{[\frac{d-1}{2}]}(-1)^i\binom{d-1-i}{i}x^iy^{d-1-2i}\bigg].$
\end{theorem}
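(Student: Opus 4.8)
The plan is to mirror the proof of Theorem 3.25, the cyclic case, the only essential differences being that the invertible object $V(1,0)$ now has infinite order—so the variable $x$ becomes a unit and no relation $x^n-1$ is needed—and that the first index $i$ ranges over all of $\Z$ rather than $\Z_n$. First I would define the ring homomorphism
\begin{equation*}
\Phi\colon \Z[x,x^{-1},y,z]\To\R(\md_q),\qquad x\mapsto[V(1,0)],\quad y\mapsto[V(0,1)],\quad z\mapsto[V(0,d)],
\end{equation*}
together with $x^{-1}\mapsto[V(-1,0)]$. Surjectivity is immediate from the generation statement recorded after Lemma 4.11, since $[V(-1,0)],[V(1,0)],[V(0,1)],[V(0,d)]$ generate $\R(\md_q)$.

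Next I would compute $\Phi$ on the distinguished elements appearing in Lemma 4.13. From the inductive definition of $f_k(x,y,z)$ (Definition 3.19) together with the Clebsch--Gordan formulae of Theorem 4.12, one checks exactly as in Theorem 3.25 that $\Phi(x^if_k(x,y,z))=[V(i,k)]$ for all $i\in\Z$ and $k\ge 0$. To locate the kernel I would first establish the decomposition
\begin{equation*}
V(0,1)^{\otimes l}\otimes V(0,md-1)=\bigoplus_{j=0}^{l}\binom{l}{j}V(j,md-1),
\end{equation*}
which follows by iterating the $l=d-1$ case of Lemma 4.10, writing $md-1=(m-1)d+(d-1)$ and using Lemma 4.9 to shift the first index; this produces the Pascal recursion $V(0,1)\otimes V(j,md-1)=V(j,md-1)\oplus V(j+1,md-1)$. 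Translating this identity through $\Phi$, and using $V(j,md-1)=V(1,0)^{\otimes j}\otimes V(0,md-1)$, yields $\Phi([y^l-\sum_{j=0}^l\binom{l}{j}x^j]f_{md-1}(x,y,z))=0$ for all $l,m\ge 1$. Hence the ideal
\begin{equation*}
J'=\Big\langle \big[y^l-\textstyle\sum_{j=0}^l\binom{l}{j}x^j\big]f_{md-1}(x,y,z)\ \big|\ l,m\ge 1\Big\rangle
\end{equation*}
lies in $\ker\Phi$, and $\Phi$ descends to an epimorphism $\overline{\Phi}\colon \Z[x,x^{-1},y,z]/J'\To\R(\md_q)$.

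To see that $\overline{\Phi}$ is injective I would invoke the basis of Lemma 4.13: every element of the quotient is a $\Z$-combination of the classes $\overline{x^if_k(x,y,z)}$, because each member of the second family in Lemma 4.13 is an $x^j$-multiple of a generator of $J'$. Since $\overline{\Phi}$ sends these classes to the $\Z$-basis $\{[V(i,k)]\}_{i\in\Z,\,k\ge 0}$ of $\R(\md_q)$, they are linearly independent in the quotient, so $\{\overline{x^if_k(x,y,z)}\}$ is a basis and $\overline{\Phi}$ is an isomorphism. It then remains to identify $J'$ with the stated ideal $J=\langle(y-x-1)f_{d-1}(x,y,z)\rangle$. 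Here I would argue exactly as at the end of Theorem 3.25: from $y^l-\sum_{j=0}^l\binom{l}{j}x^j=y^l-(1+x)^l$, which is divisible by $y-(x+1)$, together with Lemma 3.24 giving $f_{d-1}(x,y,z)\mid f_{md-1}(x,y,z)$, one gets that $(y-x-1)f_{d-1}(x,y,z)$ divides every generator of $J'$, so $J'\subseteq J$; conversely the case $l=1$, $m=1$ recovers $(y-x-1)f_{d-1}(x,y,z)$ itself, giving $J\subseteq J'$ and hence $J'=J$.

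The only genuine work beyond transcribing the cyclic argument is verifying the two index-dependent ingredients in the infinite, $\Z$-graded setting: the Clebsch--Gordan relation for $V(0,1)^{\otimes l}\otimes V(0,md-1)$ and the spanning/independence bookkeeping over the Laurent ring $\Z[x,x^{-1},y,z]$. I expect the former to be the main obstacle, since the clean binomial pattern must be derived directly from Lemmas 4.9--4.10 rather than quoted; once it is in hand, the ideal identification and the final isomorphism follow verbatim from Theorem 3.25, with the simplification that the unit $x$ removes any need for the relation $x^n-1$.
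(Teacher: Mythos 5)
Your proposal is correct and follows essentially the same route as the paper's proof: the same homomorphism $\Psi$, the same kernel ideal $J'$ generated by $\big[y^l-\sum_{j}\binom{l}{j}x^j\big]f_{md-1}(x,y,z)$, the same use of the basis in Lemma 4.13 to get injectivity, and the same identification $J'=J$ via Lemma 3.24. The only additions are welcome bits of rigor the paper leaves implicit, namely deriving the Pascal-type decomposition $V(0,1)^{\otimes l}\otimes V(0,md-1)=\bigoplus_j\binom{l}{j}V(j,md-1)$ from Lemmas 4.9--4.10 and noting explicitly that the $l=m=1$ generator gives the reverse inclusion $J\subseteq J'$.
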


\begin{proof} The proof is similar to that of Theorem 3.25.
Define a ring map
\begin{eqnarray*}
\Psi : \Z[x,x^{-1},y,z] & \longrightarrow & \R(\md_q) \\
         x  & \mapsto & [V(1,0)], \\
         x^{-1} & \mapsto & [V(-1,0)],\\
         y  & \mapsto & [V(0,1)], \\
         z  & \mapsto & [V(0,d)].
\end{eqnarray*}
Obviously $\Psi$ is surjective as $x,x^{-1},y,z$ map to a set of generators of $\R(\md_q).$
According to the definition of $f_k(x,y,z)$ and Lemmas 4.10-4.11, we have
\begin{eqnarray*}
\Psi(x^if_k(x,y,z))=[V(1,0)]^{i}f_k([V(1,0)],[V(0,1)],[V(0,d)]) =[V(i,k)]\\
\Psi(x^{-i}f_k(x,y,z))=[V(-1,0)]^{i}f_k([V(1,0)],[V(0,1)],[V(0,d)]) =[V(-i,k)]
\end{eqnarray*} for all $i \ge 0.$
Because
\begin{equation*}
\begin{split}
V(0,1)^{\otimes l}\otimes V(0,md-1)&=\sum_{j=0}^{l}\binom{l}{j}V(j,md-1)\\
&=\sum_{j=0}^{l}\binom{l}{j}V(1,0)^{\otimes j} \otimes V(0,md-1),
\end{split}
\end{equation*}
we have
\begin{equation*}
\Psi\bigg(\big[y^l-\sum_{j=0}^l\binom{l}{j}x^j\big]f_{md-1}(x,y,z)\bigg)=0.
\end{equation*}
This implies that
\begin{equation*}
J'=\bigg\langle \bigg\{\big[y^l-\sum_{j=1}^l\binom{l}{i}x^j\big]f_{md-1}(x,y,z)\bigg\}_{l,m \ge 1}\bigg\rangle \subseteq \ker\Psi.
\end{equation*}
Thus $\Psi$ induces an epimorphism
\begin{eqnarray*}
\overline{\Psi} : \Z[x,x^{-1},y,z]/J' & \longrightarrow & \R(\md_q) \\
         \overline{x}  & \mapsto & [V(1,0)], \\
         \overline{x^{-1}}  & \mapsto & [V(-1,0)], \\
         \overline{y}  & \mapsto & [V(0,1)], \\
         \overline{z}  & \mapsto & [V(0,d)].
\end{eqnarray*}
Next we prove that $\overline{\Psi}$ is a ring isomorphism. Let $\overline{g(x,x^{-1},y,z)}=g(x,x^{-1},y,z)+J' \in \Z[x,x^{-1},y,z]/J'.$ By Lemma 4.11 we have
\begin{equation*}
g(x)=\sum_{i\in \Z, \ k \ge 0} c_{i,k}x^if_k(x,y,z) + \sum_{j\in \Z, \ m \ge 0, \ l \ge 1} c_{j,l,m}x^j\big[y^l-\sum_{j=1}^l\binom{l}{i}x^j\big]f_{md-1}(x,y,z),
\end{equation*}
where $c_{i,k},c_{j,l,m}\in \Z.$
So $$\overline{g(x,x^{-1},y,z)}=\sum_{i\in \Z, \ k \ge 0} c_{i,k}\overline{x^if_k(x,y,z)}.$$ In other words, $\{\overline{x^if_k(x,y,z)}|i\in \Z, \ k\geq 0\}$ spans $\Z[x,x^{-1},y,z]/J'.$ Note that $\overline{\Psi}(\overline{x^if_k(x,y,z)})=[V(i,k)]$, and $\{[V(i,k)]|i\in \Z, \ k\geq 0\}$ is a basis of $\R(\md_q).$ It follows that $\{\overline{x^if_k(x,y,z)}|i\in \Z, \ k\geq 0\}$ is linearly independent, hence a basis of $\Z[x,x^{-1},y,z]/J'.$  So $\overline{\Psi}$ is an isomorphism. Now by Lemma 3.24 we have $J'=\langle (y-x-1)f_{d-1}(x,y,z)\rangle=J$.

We are done.
\end{proof}

%

\end{document}